\tikzstyle{mybox} = [draw=black, fill=white,  thick,
\tikzstyle{mybox} = [draw=black, fill=white,  thick,
\tikzstyle arrowstyle=[scale=1]
\tikzstyle directed=[postaction={decorate,decoration={markings,
		mark=at position .65 with {\arrow[arrowstyle]{stealth}}}}]
\tikzstyle reverse directed=[postaction={decorate,decoration={markings,
		mark=at position .65 with {\arrowreversed[arrowstyle]{stealth};}}}]
\newcommand{\boundellipse}[3]
{(#1) ellipse (#2 and #3)
}
\newtheorem{theorem}{Theorem}
\newtheorem{cor}{Corollary}
\newtheorem{prop}{Proposition}
\theoremstyle{definition}
\newtheorem{definition}{Definition}
\newtheorem{remark}{Remark}
\newtheorem{example}{Example}
\begin{document}

\title{On the Performance of a Novel Class of Linear System Solvers and Comparison with State-of-The-Art Algorithms}
\author{Chun Lau\footnote{Department of Computer Science, Rutgers University (larryl@cs.rutgers.edu).} \ and Bahman Kalantari\footnote{Emeritus Professor, Department of Computer Science, Rutgers University 
  (kalantari@cs.rutgers.edu).}}
\date{}

\maketitle

\begin{abstract}
We present a comprehensive computational study of a class of linear system solvers, called {\it Triangle Algorithm} (TA) and {\it Centering Triangle Algorithm} (CTA), developed by Kalantari \cite{kalantari23}. The algorithms compute an approximate solution or minimum-norm solution to $Ax=b$ or $A^TAx=A^Tb$, where $A$ is an $m \times n$ real matrix of arbitrary rank. The algorithms specialize when $A$ is symmetric positive semi-definite. 

We first summarize the description and theoretical properties of TA and CTA from \cite{kalantari23}. Based on these, we give an implementation of the algorithms that is easy-to-use for practitioners, versatile for a wide range of problems, and robust in that our implementation does not necessitate any constraints on $A$. Our implementation accomplishes the tasks of computing approximate solution or minimum-norm solution. Therefore, this work complements existing literature and tools on solving linear systems, where there is no mention of a linear system solver implementation for arbitrary coefficient matrix $A$. 

Next, we make computational comparisons of our implementation with the Matlab implementations of two state-of-the-art algorithms, the popular GMRES for solving square linear systems as well as ``lsqminnorm" for solving rectangular systems. We consider square and rectangular matrices, for $m$ up to $10000$ and $n$ up to $1000000$, encompassing a variety of applications, such as matrices in  Computational Fluid Dynamics, Clement matrices, all 3689 matrices of the well-known Matrix Market and Suite-Sparse collection \cite{matrixmarket, suitesparse}, and matrices in linear programming (LP) listed in the Suite-Sparse collection. These results indicate that our implementation outperforms GMRES and ``lsqminnorm" both in runtime and quality of residuals. Moreover, as can be seen from our figures, the relative residuals of CTA decrease considerably faster and more consistently than GMRES. In many cases of singular  matrices, GMRES does not even converge. Yet, our implementation provides high precision approximation, faster than GMRES reports lack of convergence. With respect to ``lsqminnorm", our implementation runs faster, producing better solutions.

Additionally, we present a theoretical study in the dynamics of iterations of residuals in CTA and complement it with revealing visualizations. Lastly, we extend TA for LP feasibility problems, handling non-negativity constraints. Computational results show that our implementation for this extension is on par with those of TA and CTA, suggesting applicability in 
linear programming and related problems. 

\end{abstract}





\newpage

\tableofcontents

\newpage 

\section{Introduction}
Gaussian elimination is a direct method  for solving  a linear equation $Ax=b$, a well-known method analyzed in  numerous textbooks,  e.g. Atkinson \cite{Atkinson}, Bini and Pan \cite{Bini}, Golub and van Loan \cite{Golub}, and Strang \cite{Strang}. While in theory Gaussian method and LU factorization  provide means for computing solution  to a solvable system, the method provides no information until the very end. Also, for large dimensions the method is not practical. Iterative methods, generally applicable for solving a square invertible system, generate a sequence of approximate solutions that ideally converge to the solution of the system. In contrast to direct methods, iterative methods offer very important and practical  alternatives, especially when solving  very large or sparse linear systems, such as problems from discretized partial differential equations, where direct methods become prohibitive.
When $A$ is an $m \times n$ matrix, the linear system could be {\it underdetermined} ($m  \leq n$), {\it overdetermined} ($n \geq m$), {\it consistent} or {\it inconsistent}. Thus solving a linear system $Ax=b$ with an $m \times n$ matrix could be interpreted as finding a solution when the system is consistent.  Since the set of solutions may not be unique it may be desirable to compute the {\it minimum-norm} solution.  When the system is inconsistent, we may wish to find a solution to the normal equation $A^TAx=A^Tb$, or its minimum-norm solution.

Iterative methods for linear systems include classical methods such as the Jacobi, the Gauss-Seidel, the {\it successive over-relaxation} (SOR), the {\it accelerated  over-relaxation} (AOR), and the {\it symmetric successive over-relaxation} method which applies when $A$ is symmetric. When $A$ is  symmetric and positive definite, the {\it steepest descent} method (SD) and the {\it conjugate gradient} method (CG) are applicable. Convergence rate of iterative methods can often be substantially accelerated by preconditioning. However, there is a trade-off in computational time. While preconditioning may also help the accuracy in computation of approximation, if the condition number of the underlying matrix is not too large an iterative method may need no preconditioning. Some major articles on preconditioning include   Barrett et al. \cite{Bar}, Golub and van Loan \cite{Golub},  Greenbaum \cite{Green}, Hadjidimos \cite{Hadjid}, Liesen and Strakos \cite{Liesen}, Saad \cite{Saad}, Saad  and Schultz \cite{saad1986gmres}, Simoncini and Szyld \cite{Szyld}, van der Vorst \cite{van1}, Varga \cite{Varga}, and Young \cite{Young}, Braatz and Morari \cite{BRAATZ94} and Wathen \cite{wathenprecon} that includes a vast set of references on the subject.

To guarantee convergence, iterative methods often require the coefficient matrix to satisfy certain conditions, such as invertibility, positive definiteness, diagonal dominance. For a linear system $Ax = b$ with $A$  symmetric positive definite, the Steepest Descent and the Conjugate Gradient methods are among the well-studied iterative methods.  The major computational effort in each iteration of the iterative methods involves matrix-vector multiplication, thus requiring $O(N)$ operations, where $N$ is the number of nonzero entries in the matrix. This makes iterative methods very attractive for solving large sparse systems and also for parallelization, see Demmel \cite{Dem}, Dongarra et al. \cite{Don}, Duff and van der Vorst \cite{Duff},  van der Vorst  \cite{van1}, van der Vorst and Chan \cite{Van2}. When $A$ is only invertible but nonsymmetric, the development of iterative methods is much more complicated than the symmetric positive semidefinite cases. One of the popular iterative methods for solving a general (nonsymmetric) linear system with $A$ invertible is the {\it bi-conjugate gradient method stabilized} (BiCGSTAB), developed by H.A. van der Vorst \cite{Vorst1}. It is a Krylov subspace method that has faster and smoother convergence than {\it conjugate gradient squared method} (CGS). A second popular method for solving invertible linear systems with nonsymmetric matrix is the {\it generalized minimal residual method} (GMRES), developed by Saad  and Schultz \cite{saad1986gmres}. GMRES is a generalization of MINRES method developed by Paige and Saunders \cite{Paige75}. While MINRES applies when the matrix $A$ is symmetric indefinite and invertible, GMRES only requires the invertibility of $A$.  For  details on these iterative algorithms and more see Chen \cite{Chen}, Liesen and Strakos \cite{Liesen}, van der Vorst \cite{Vorst1}, Saad \cite{Saad}, Saad  and Schultz \cite{saad1986gmres}, Simoncini and Szyld \cite{Szyld}.

In this article we consider the computational performance of a class of iterative methods, developed by Kalantari \cite{kalantari23}, intended to solve either the linear system $Ax=b$ or the normal equation $A^TAx=A^Tb$, where $A$ is an $m \times n$ real matrix  of arbitrary rank. The methods use any member of a set of  iteration functions, $\{F_1, \dots, F_m\}$, referred as {\it Centering Triangle Algorithm} (CTA) family.  The methods are motivated by the {\it Triangle Algorithm}, introduced in  Kalantari \cite{kalfull, kalcon}, a method for solving the {\it convex hull membership problem} (CHMP). CHMP is the problem of testing if a given point in a Euclidean space lies in the convex hull of a given compact subset in that Euclidean space. Even when the given set consists of a finite number of points the Triangle Algorithm and its extensions in Awasthi et al. \cite{AKZ} and Kalantari and Zhang \cite{KalY22} find applications in  problems arising in optimization and linear programming, computational geometry, and machine learning. As far as this article is concerned, there is no need to describe the Triangle Algorithm for general sets. Rather, the goal of the article is to describe the Triangle Algorithm for linear systems and the CTA class. It is also not necessary to describe the connections between TA and CTA. For there connection the reader can consult \cite{kalantari23}.  Having described these, we then present an extensive computational study of the performance of TA and the CTA class based on the implementation of the methods for different kinds of matrices, including positive semidefinite, positive definite,  tridiagonal, and random sparse matrices in a variety of real world applications such as PDEs, ODEs, and Computational Fluid Dynamics, matrices having dimension up to 10000, and generating approximations  with precision up to $10^{-15}$.

The remaining sections of the article are as follows. In Section \ref{sec2} we give definition of approximation problems to be addressed in the article.
Section \ref{sec3} we describe the Centering Triangle Algorithm (CTA) family of iteration function and summarize their major theoretical properties, including bounds on iteration complexities, taken from \cite{kalantari23}.  In this Section we also present a study the dynamics of the first member of CTA family and complement it with some visualizations. 
In Section \ref{sec4}  we describe the Triangle Algorithm (TA) and summarize its convergence properties from  \cite{kalantari23}. In Section \ref{sec5} we present a comprehensive set of computational results with an implementation of CTA and TA and make comparisons with GMRES for square matrices and "lsqminnorm" for rectangular matrices. In Section \ref{sec6} we extend TA to linear programming feasibility problem and present computational results. Finally, we make concluding remarks,  

\section{Definition of Approximation Solutions} \label{sec2}

Approximate solutions can be defined in different terms. Computationally, one may be  interested in approximations in terms of the distance between the approximate solution and the unique solution,  or when the solution is not unique, the distance from the  approximate solution to the minimum norm solution.  Alternatively, we may be interested in measuring error with respect to the norm of residuals. Here we  give relevant  definitions used in \cite{kalantari23} and in the computations to be presented in this article. 

\begin{definition}
Given  $\varepsilon>0$,\\

1) We say $x_\varepsilon$ is an $\varepsilon$-approximate solution of $Ax=b$ if $\Vert Ax_\varepsilon  - b \Vert \leq \varepsilon$. 

2) We say $x_\varepsilon$ is an $\varepsilon$-approximate solution of the normal equation if $\Vert A^TA x_\varepsilon - A^Tb\Vert \leq \varepsilon$.
\end{definition}

To define approximate minimum-norm solutions, first consider the ellipsoid
\begin{equation}
E_{A,\rho}=\{Ax: \Vert x \Vert \leq \rho\},
\end{equation}
the image of the central ball of radius $\rho$  under the linear mapping defined by $A$. When $A$ is invertible
this is a full-dimensional ellipsoid (see Figure \ref{minnormfig}).

\begin{definition} \label{minnormapp} Given $\varepsilon \in (0,1)$, 
we say $x_\varepsilon$ is an $\varepsilon$-approximate minimum-norm solution of $Ax=b$ if it satisfies the following three conditions:

1. $x_\varepsilon$ satisfies $\Vert Ax_\varepsilon - b \Vert \leq \varepsilon$.

2. There exists a positive number $\underline \rho$ such that $\underline \rho \leq \rho_\varepsilon = \Vert x_\varepsilon \Vert$ and
$b \not \in E^\circ_{A, \underline \rho}= \{Ax: \Vert x \Vert < \underline \rho\} $.

3. $\rho_\varepsilon - \underline \rho \leq \varepsilon$.
\end{definition}

\begin{prop}  \label{propx} Suppose $Ax=b$ is solvable, $x_*$  its minimum-norm solution. If $x_\varepsilon$ is an $\varepsilon$-approximate minimum-norm solution and $\Vert x_* \Vert \leq \Vert x_\varepsilon \Vert$, then $\Vert x_\varepsilon - x_* \Vert \leq \varepsilon$. \qed
\end{prop}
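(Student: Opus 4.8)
The plan is to derive the estimate from two ingredients: the lower-bound certificate encoded in condition 2 of Definition \ref{minnormapp}, together with the orthogonal-projection characterization of the minimum-norm solution.

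First I would pin down the location of $\Vert x_*\Vert$ relative to $\underline\rho$ and $\rho_\varepsilon$. Since $x_*$ solves $Ax_*=b$, the point $b=Ax_*$ lies in $E_{A,\Vert x_*\Vert}$. If we had $\Vert x_*\Vert<\underline\rho$, then $b$ would belong to the open ellipsoid $E^\circ_{A,\underline\rho}$, contradicting condition 2; hence $\underline\rho\le\Vert x_*\Vert$. Combining this with the hypothesis $\Vert x_*\Vert\le\Vert x_\varepsilon\Vert=\rho_\varepsilon$ and with condition 3 gives the sandwich $\underline\rho\le\Vert x_*\Vert\le\rho_\varepsilon$ together with $0\le\rho_\varepsilon-\Vert x_*\Vert\le\rho_\varepsilon-\underline\rho\le\varepsilon$. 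Thus the norms of $x_\varepsilon$ and $x_*$ differ by at most $\varepsilon$.

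Next comes the geometric core. The minimum-norm solution is the orthogonal projection of the origin onto the affine solution set $S=\{x:Ax=b\}=x_*+\ker A$; equivalently $x_*\in\mathrm{range}(A^T)=(\ker A)^\perp$. The projection property yields, for every $x\in S$, the obtuse-angle inequality $\langle x_*,\,x-x_*\rangle\ge 0$, i.e.\ $\langle x_*,x\rangle\ge\Vert x_*\Vert^2$. Applying this with $x=x_\varepsilon$ and expanding the square reduces the whole proposition to the single key inequality
\[
\Vert x_\varepsilon-x_*\Vert^2=\Vert x_\varepsilon\Vert^2-2\langle x_\varepsilon,x_*\rangle+\Vert x_*\Vert^2\le\Vert x_\varepsilon\Vert^2-\Vert x_*\Vert^2=(\Vert x_\varepsilon\Vert-\Vert x_*\Vert)(\Vert x_\varepsilon\Vert+\Vert x_*\Vert),
\]
after which it remains only to feed in the radial gap $\Vert x_\varepsilon\Vert-\Vert x_*\Vert\le\varepsilon$ established in the first step.

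The step I expect to be the main obstacle is precisely this passage from closeness of norms to closeness of vectors. Without the minimum-norm (projection) hypothesis, two solutions of nearly equal length may point in very different directions, so the obtuse-angle inequality — and hence the placement $x_*\in\mathrm{range}(A^T)$ — is what does the real work; careful handling of the factor $\Vert x_\varepsilon\Vert+\Vert x_*\Vert$ against the gap $\varepsilon$ is where the estimate must be tightened to reach the stated $\varepsilon$-bound. A secondary technical point is that $x_\varepsilon$ is only an $\varepsilon$-approximate solution, so it need not lie exactly on $S$; I would address this either by invoking that the iterate returned by the algorithm lies in $\mathrm{range}(A^T)$, or by writing $x_*=A^Ty$ and carrying the residual cross-term $\langle Ax_\varepsilon-b,\,y\rangle$, which is controlled through $\Vert Ax_\varepsilon-b\Vert\le\varepsilon$. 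Verifying that this cross-term does not degrade the final estimate is the only delicate computation in the argument.
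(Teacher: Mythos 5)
Your setup is sound, and it is in fact the natural route: condition 2 of Definition \ref{minnormapp} forces $\underline\rho \le \Vert x_*\Vert$ (otherwise $b = Ax_*$ would lie in $E^\circ_{A,\underline\rho}$), which together with condition 3 and the hypothesis $\Vert x_*\Vert \le \Vert x_\varepsilon\Vert$ gives the sandwich $0 \le \Vert x_\varepsilon\Vert - \Vert x_*\Vert \le \varepsilon$; and the projection identity $\langle x_*, x - x_*\rangle = 0$ on the solution set is the right tool for converting a norm gap into a distance. But the proof does not close. Your key display yields only
\[
\Vert x_\varepsilon - x_*\Vert^2 \le (\Vert x_\varepsilon\Vert - \Vert x_*\Vert)(\Vert x_\varepsilon\Vert + \Vert x_*\Vert) \le \varepsilon\,(\Vert x_\varepsilon\Vert + \Vert x_*\Vert),
\]
i.e. an $O(\sqrt{\varepsilon})$ bound, and the "tightening" you defer to the end is precisely the missing content: to reach the stated conclusion you would need $\varepsilon(\Vert x_\varepsilon\Vert + \Vert x_*\Vert) \le \varepsilon^2$, which fails whenever $\Vert x_\varepsilon\Vert + \Vert x_*\Vert > \varepsilon$ --- the typical situation, since $\varepsilon \in (0,1)$. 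The same defect appears in your fallback plan for inexact $x_\varepsilon$: the cross-term $\langle Ax_\varepsilon - b, y\rangle$ contributes $2\varepsilon\Vert y\Vert$ inside the square root, again an uncontrolled factor.

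Moreover, this gap cannot be repaired by a cleverer argument, because under the stated hypotheses the $\sqrt{\varepsilon}$ rate is sharp. Take any $A$ with $\ker A \ne \{0\}$ and $x_* \ne 0$, choose $v \in \ker A$ with $\Vert v\Vert^2 = 2\varepsilon\Vert x_*\Vert + \varepsilon^2$, and set $x_\varepsilon = x_* + v$. Since $x_* \perp \ker A$, Pythagoras gives $\Vert x_\varepsilon\Vert = \Vert x_*\Vert + \varepsilon$; all three conditions of Definition \ref{minnormapp} hold with $\underline\rho = \Vert x_*\Vert$ (the residual is exactly zero; every solution of $Ax = b$ has norm at least $\Vert x_*\Vert$, so $b \notin E^\circ_{A,\underline\rho}$; and $\rho_\varepsilon - \underline\rho = \varepsilon$), and $\Vert x_*\Vert \le \Vert x_\varepsilon\Vert$, yet $\Vert x_\varepsilon - x_*\Vert = \sqrt{2\varepsilon\Vert x_*\Vert + \varepsilon^2} > \varepsilon$. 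So what your argument actually proves is the weaker estimate $\Vert x_\varepsilon - x_*\Vert \le \sqrt{\varepsilon(\Vert x_\varepsilon\Vert + \Vert x_*\Vert)}$, and the proposition in its printed form (which the paper asserts with no proof, only a \qed) needs either this weakened conclusion or additional hypotheses --- for instance $x_\varepsilon \in \mathrm{range}(A^T)$, as the Triangle Algorithm iterates indeed satisfy, together with a lower bound of one on the smallest positive singular value of $A$, so that $\Vert x_\varepsilon - x_*\Vert \le \Vert A(x_\varepsilon - x_*)\Vert = \Vert Ax_\varepsilon - b\Vert \le \varepsilon$.
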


Proposition \ref{propx} shows  the three conditions make $\varepsilon$-approximation to the minimum-norm solution, $x_*$, a meaningful approximation.  Even if the norm of $x_*$ exceeds the norm of  $x_\varepsilon$, computation of $x_\varepsilon$ provides a useful information. In fact even if $Ax=b$ is unsolvable but for a given $\varepsilon$ it is possible to compute $\varepsilon$-approximate solution, this approximate solution provides information in terms of the proximity of the system to feasibility with respect to that precision.

 Condition 1 implies $b_\varepsilon =Ax_\varepsilon$, a boundary point of the ellipsoid $E_{A, \rho_\varepsilon}$, satisfies $\Vert b - b_\varepsilon \Vert \leq \varepsilon$. However, $b$ is not necessarily in  this ellipsoid.  Figure \ref{minnormfig} shows two cases, the outer $b$ is not inside the ellipsoid while the inner $b$ is. Each $b$ is within a distance of $\varepsilon$ from $b_\varepsilon$.  $\varepsilon$-approximate minimum-norm solution to $Ax=b$ is not unique. Condition 2 in particular implies $b$ does not lie in the interior of $E_{A, \rho_\varepsilon}$. Condition 3 implies if $Ax=b$ is solvable the distance between the norm of $x_*$ and the norm $x_\varepsilon$ is within $\varepsilon$.

\begin{figure}[htpb]
	\centering
	\raisebox{10ex}{
	\begin{tikzpicture}[scale=1.1]
	\begin{scope}
	\begin{scope}
	[rotate=47.5829]
	\draw \boundellipse{0,0}{2.81}{1.21};
	\filldraw (0,0) circle (.5pt) node[below] {$O$};
	\end{scope}
\filldraw (-2.5,-3.0)   node[above] {$E_{A, \rho_\varepsilon}$};
\filldraw (-2,-2.4)   node[above] {$E_{A, \underline \rho}$};
	\filldraw (0.2,2.50)  circle (.5pt)  node[above] {$b$};
\filldraw (0.4,2.40)  circle (.5pt)  node[right] {$b_\varepsilon=Ax_\varepsilon$};
\filldraw (0.4,2.20)  circle (.5pt)  node[below] {$b$};
\filldraw (0.6,.8)  circle (.5pt)  node[above] {$x_\varepsilon$};
	\end{scope}
    \begin{scope}
	[rotate=47.5829]
	\draw \boundellipse{0,0}{3.653}{1.573};
	\filldraw (0,0) circle (.5pt) node[below] {$O$};
	\end{scope}
\draw \boundellipse{0,0}{1.0}{1.0};
\draw \boundellipse{0,0}{.75}{.75};
	\end{tikzpicture}}
	\caption{The larger ellipsoid is $E_{A, \rho_\varepsilon}$, the smaller one is $E_{A, \underline \rho}$.  $b_\varepsilon$ approximates $b$.
The figure depicts two scenarios of $b$, one where $b$ is inside $E_{A, \rho_\varepsilon}$ and another where it is outside. The balls of radius $\rho$ and $\Vert x_\varepsilon \Vert$ are also depicted in the figure.}
	\label{minnormfig}
\end{figure}
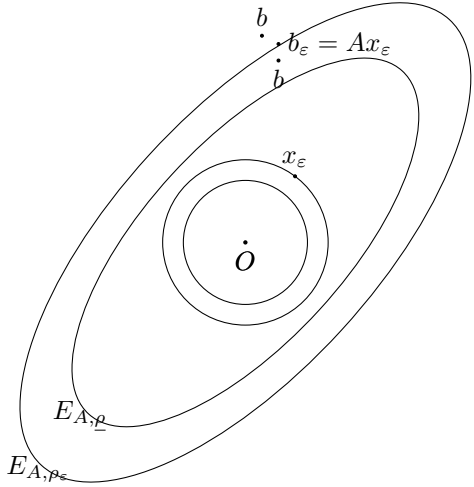

\begin{definition} \label{minnormappnormal}
We say $x_\varepsilon$ is an $\varepsilon$-approximate minimum-norm solution of $A^TAx=A^Tb$ if it satisfies the following conditions:

1. $x_\varepsilon$ satisfies $\Vert A^TAx_\varepsilon - A^Tb \Vert \leq \varepsilon$.

2. There exists a positive number $\underline \rho$ such that $\underline \rho \leq \rho_\varepsilon = \Vert x_\varepsilon \Vert$ and
$A^Tb \not \in E^\circ_{A^TA, \underline \rho}=\{A^TAx: \Vert x \Vert < \underline \rho\}$.

3. $\rho_\varepsilon - \underline \rho \leq \varepsilon$.
\end{definition}

Definition  \ref{minnormappnormal} is simply the same as Definition  \ref{minnormapp} applied to the system $A^TAx=A^Tb$, however the algorithm to be proposed for computing an $\varepsilon$-approximate minimum-norm solution to $A^TAx=A^Tb$ will exploit the fact that $A^TA$ is symmetric PSD.

\begin{definition} Consider the linear system $Ax=b$. Given $x' \in \mathbb{R}^n$, let $b'=Ax'$.  The {\it residual} and {\it least-square residual} are respectively defined as,
\begin{equation}
r'=b-b', \quad A^Tr'.
\end{equation}
\end{definition}

Given $\varepsilon$, the above-mentioned approximate solutions of $Ax=b$ and its normal equation  correspond to computing a residual with norm to within $\varepsilon$, and when this is not possible,
a least-square residual with norm to within $\varepsilon$.

\section{The Centering Triangle Algorithm (CTA)} \label{sec3}
In this section we describe the CTA family and summarize their convergence properties from Kalantari \cite{kalantari23}. The corresponding algorithms
either solve a linear system $Ax=b$ or the {\it normal equation} $A^TAx=A^Tb$, where  $A$ is an $m \times n$ real matrix of arbitrary rank. However, the algorithms exploit the case where $A$ is symmetric positive semidefinite. While $Ax=b$ may be unsolvable, the normal equation is always solvable.   The {\it minimum-norm} solution to $Ax=b$ is a solution that has the minimum norm. The {\it minimum-norm} least-squares solution is the solution to the normal equation  with minimum norm. It will be denoted by $x_*$ and always exists.

\subsection{Technical Foundation of CTA}

Given the $m \times n $ matrix $A$ above, let $H=AA^T$ for a general matrix and $H=A$ when $A$ is symmetric PSD. Given  $t \in \{1, \dots, m \}$, the  {\it Centering Triangle Algorithm} (CTA) of order $t$ refers to the iteration of

\begin{equation}
F_t(r)=r- \sum_{i=1}^t \alpha_{t,i}(r) H^i r,
\end{equation}
where   $\alpha_t(r)=(\alpha_{t,1}(r), \dots, \alpha_{t,t}(r))^T$ is a solution to a convex optimization problem:
\begin{equation}
\alpha_t(r)=
{\rm  argmin} \bigg  \{\Vert r- \sum_{i=1}^t \alpha_{i} H^i r \Vert : (\alpha_1, \alpha_2, \cdots, \alpha_t)^T \in \mathbb{R}^t \bigg \}.
\end{equation}
It can be shown that $\alpha_t(r)$ is  a solution to
the following $t \times t$ linear system, referred as {\it auxiliary equation}
\begin{equation} \label{eqcoeff}
\begin{pmatrix}
\phi_2&\phi_3& \ldots&\phi_{t+1}\\
\phi_3&\phi_4& \ldots&\phi_{t+2}\\
\vdots&\vdots&\ddots&\vdots\\
\phi_{t+1}&\phi_{t+2}& \ldots&\phi_{2t}\\
\end{pmatrix}
\begin{pmatrix}
\alpha_1\\
\alpha_2\\
\vdots\\
\alpha_t\\
\end{pmatrix}
=
\begin{pmatrix}
\phi_1\\
\phi_2\\
\vdots\\
\phi_t\\
\end{pmatrix}, \quad \phi_i=\phi_i(r)=r^TH^ir. 
\end{equation}

It can be shown the auxiliary equation always possess a solution.  When the solution to the auxiliary equation is not unique we may select $\alpha_t(r)$ to be the minimum-norm solution.

Given the current residual $r'=b-Ax'$, the algorithm generates a new residual $r''$ and corresponding approximate solution $x''$, satisfying $r''=b-Ax''$. Note that the residuals and approximate solutions depend on $t$, however for convenience of notation we have suppressed the dependence on $t$. They are defined as:

\begin{equation}  \label{CTAGEN7}
r'' = F_t(r')=r'- \sum_{i=1}^t \alpha_{t,i}(r') H^ir',
\end{equation}
\begin{equation}  \label{CTAGEN7A}
x''= x' +\begin{cases}
\sum_{i=1}^t \alpha_{t,i}(r') H^{i-1}r', & \text{if ~} H=A\\
\sum_{i=1}^t \alpha_{t,i}(r') A^T H^{i-1}r', & \text{if~} H=AA^T.
\end{cases}
\end{equation}

\begin{example} \label{exam1} When $t=1$ and $Hr \not =0$ we get a very simple formula for the iteration of the residuals:
\begin{equation}
F_1(r)=r- \alpha_{1,1}(r)Hr, \quad \alpha_{1,1}(r)= \frac{\phi_1}{\phi_2}= \frac{r^T\!Hr}{r^T\!H^2r}
\end{equation}
Suppose $r_0$ is a residual for $Ax=b$, i.e. $r_0=b-Ax_0$ for some $x_0$.  If $Hr_0 \not =0$, $\Vert F(r_0) \Vert < \Vert r_0 \Vert$. This gives a new residual $r_1$ as $b-Ax_1$. If $Hr_0=0$, then $r_0^THr_0=0$ and hence $A^Tr_0=0$ when $H=AA^T$. When $A$ is symmetric PSD, $Hr_0=0$ implies  $\Vert A^{1/2}r_0 \Vert=0$. But this implies $Ar_0=0$.  Thus if $Hr_0=0$,  $x_0$ is a solution to the normal equation. When $t=2$,  and $\phi_2 \phi_4- \phi_3^2 \not =0$,  from the auxiliary equation we get
\begin{equation} \label{thmeq200xx}
F_2(r) = r - \alpha_{2,1}(r)Hr - \alpha_{2,2}(r) H^2r, 
\end{equation}
where
\begin{equation}
\alpha_{2,1}(r)= \frac{\phi_1 \phi_4-\phi_2 \phi_3}{\phi_2 \phi_4- \phi_3^2}, \quad
\alpha_{2,2}(r)= \frac{\phi^2_2 - \phi_1 \phi_3}{\phi_2 \phi_4- \phi^2_3}.
\end{equation}
\end{example}

\subsection{Description of CTA}
Given a fixed $t \in \{1, \dots, m\}$, Algorithm \ref{1.1} is a conceptually simple algorithm based on computing iterates of $F_t$. It terminates when  either $\Vert r' \Vert \leq \varepsilon$ or 
$\Vert {r'}^T H r' \Vert \leq \varepsilon$. 
If it is known $Ax=b$ is solvable, the while loop of Algorithm \ref{1.1} only needs  the first clause because eventually $\Vert r'  \Vert \leq \varepsilon$.

\begin{algorithm}[!htb]
\SetAlgoNoLine
\KwIn{$A \in \mathbb{R}^{m \times n}$, $b \in \mathbb{R}^m$, Fixed $t \in \{1, \dots, m\}$, $\varepsilon \in (0, 1)$.}
$x' \gets 0$ (or a random point in $\mathbb{R}^n$), $r' \gets b-Ax'$.

\While{$ {\rm (}\|r'\|  > \varepsilon {\rm )} $ $\wedge$ $ {\rm (} r'^T H r'> \varepsilon {\rm )}$}{
$\alpha_t \gets$ min-norm solution to auxiliary equation (see  (\ref{eqcoeff})),

$r' \gets r''$, $x' \gets x''$ (see (\ref{CTAGEN7}) and (\ref{CTAGEN7A}))
}
\caption{(CTA): Computes an $\varepsilon$-approximate solution of $Ax=b$ or $A^TAx=A^Tb$ via iterations of $F_t$, (final $x'$ is approximate solution).} \label{1.1}
\end{algorithm}

An enhanced version of Algorithm \ref{1.1} can be stated when $t \geq 2$. Given $r'=b -Ax'$, the enhanced algorithm stops if either $\Vert r' \Vert \leq \varepsilon$, or 
$\Vert {r'}^T H r' \Vert \leq \varepsilon$, or for some $j \leq t-1$, $\Vert F_1^{\circ (j)}(r')^THF_1^{\circ (j)}(r') \Vert \leq \varepsilon$, where $F^{\circ j}_t(r')= F_t \circ F_t \circ \cdots \circ F_t(r')$, the $j$-fold composition of $F_t$  with itself at $r'$.
In the latter case $j$ is taken to be the smallest such index.  Thus in addition to computing $r''=F_t(r')$ and $x''$ as in (\ref{CTAGEN7A}), the algorithm  may need to compute  $\widehat x''$ as
\begin{equation} \label{widehatxB}
\widehat x''= x' + \begin{cases}
\sum_{i=0}^{j-1} \alpha_{1,1} \big (F_1^{\circ i}(r' )\big ) F_1^{\circ i}(r'), & \text{if ~}
H=A\\
\sum_{i=0}^{j-1} \alpha_{1,1} \big (F_1^{\circ i}(r') \big ) A^T F_1^{\circ i}(r'), & \text{if~} H=AA^T,
\end{cases}
\end{equation}
where $j$ is the smallest index in $\{1, \dots, t-1\}$ such that
$\Vert F^{\circ j}_1(r')^THF^{\circ j}_1(r') \Vert  \leq \varepsilon$ and $\alpha_{1,1}(r)=\phi_1(r)/\phi_2(r)$.\\

\begin{example}
The {\it second-order CTA} is the iterations of  $F_2(r)$, as computed in  Example \ref{exam1}. Given the current residual $r'=b-Ax'$, the next residual $r''$ is defined as

\begin{equation}  \label{CTAx2}
r''= F_2(r')=r' - \alpha_{2,1}(r')Hr' - \alpha_{2,2}(r') H^2r', \quad r''=b-Ax'',
\end{equation}
where
\begin{equation}  \label{CTAx2p}
x''= x' +\begin{cases}
\alpha_{2,1}(r') r' + \alpha_{2,2}(r') Ar', & \text{if ~} H=A\\
\alpha_{2,1}(r') A^Tr' + \alpha_{2,2}(r') A^THr', & \text{if~} H=AA^T.
\end{cases}
\end{equation}
Also,
\begin{equation} \label{CTAx2pp}
\widehat x''= x''+ \begin{cases}
\alpha_{1,1}\big (F_1(r') \big)  F_1(r'), & \text{if ~} H=A\\
\alpha_{1,1}\big (F_1(r') \big) A^TF_1(r'), & \text{if~} H=AA^T.
\end{cases}, \quad \alpha_{1,1}\big (F_1(r') \big)= \frac{F_1(r')^TH F_1(r')}{F_1(r')^TH^2 F_1(r')}.
\end{equation}
The stopping criterion in the algorithm is the satisfiability of at least one of the following three  conditions:
\begin{equation}
\Vert r' \Vert \leq \varepsilon, \quad r'^T H r' \leq \varepsilon, \quad F_1(r')^THF_1(r') \leq \varepsilon.
\end{equation}
\end{example}

\subsection{CTA Convergence Properties} \label{sec2p}

\begin{theorem}  \label{thmone} Given $t \in \{1, \dots, m\}$,  $x_0 \in \mathbb{R}^n$, set $r_0=b -Ax_0$. Denote the sequence of residuals $r''$ and
$x''$ in (\ref{CTAGEN7}) and (\ref{CTAGEN7A}) as $\{r_k\}$ and $\{x_k\}$, respectively.  If ${\rm rank}(A)=m$, $x_k$  converges to the solution of $Ax=b$.  $A^Tb- A^TAx_k$ always converges to zero. If  $\{x_k\}$ is a bounded sequence, any accumulation point, $\overline x$, is a solution to the normal equation. If $Ax=b$ is solvable, $\overline x$ is a solution.
Specifically, given $\varepsilon \in (0,1)$, we have\\

(1) When $H$ is positive definite, $\kappa$ the condition number of $H$,  in $k=O( (\kappa/t) \ln \varepsilon^{-1})$ iterations
\begin{equation} \label{onecon}
\Vert A x_k - b \Vert \leq \varepsilon.
\end{equation}

(2)  When $H$ is singular, $\kappa^+$ the ratio of the largest eigenvalue of $H$ to its smallest positive eigenvalue,  in $k =O(\kappa^+/ \varepsilon)$ iterations, either (\ref{onecon}) holds or the following condition holds:

\begin{equation} \label{twocon}
\Vert A^TA x_{k} - A^Tb \Vert \leq \begin{cases}
\sqrt{\varepsilon} \Vert A \Vert^{1/2} , & \text{if ~} H=A\\
\sqrt{\varepsilon}, & \text{if~} H=AA^T.
\end{cases}
\end{equation}

(3) When $H$ is singular, in $k=O(\kappa^+ \lambda/ t\varepsilon)$ iterations, if neither (\ref{onecon}) nor (\ref{twocon}) hold, then:

\begin{equation} \label{thmeq4ggg}
\Vert A^TA \widehat x_{k} - A^Tb \Vert \leq \begin{cases}
\sqrt{\varepsilon} \Vert A \Vert^{1/2} , & \text{if ~} H=A\\
\sqrt{\varepsilon}, & \text{if~} H=AA^T,
\end{cases}
\end{equation}
where $\widehat x_k= \widehat x''$ as defined in (\ref{widehatxB}).\\

(4) Suppose $Ax=b$ is solvable and $H=AA^T$ and $x_0=A^Tw_0$ for some $w_0 \in \mathbb{R}^m$.  If ${\rm rank}(A)=m$, $x_k$ converges to the minimum-norm solution.
For arbitrary rank $A$, for each $k \geq 1$,  $x_k=A^Tw_k$ for some $w_k \in \mathbb{R}^m$ and $\widehat x_k=A^T \widehat w_k$ for some $\widehat w_k \in \mathbb{R}^m$.  Moreover, if $\{w_k\}$ is a bounded sequence, then $x_k$ converges to the minimum-norm solution of $Ax=b$. Likewise, if $\{\widehat w_k\}$ is a bounded sequence, then $\widehat x_k$ converges to the minimum-norm solution of $Ax=b$.\\

(5) Each iteration takes $O(Nt+t^3)$ operations, $N$ the number of nonzeros entries in $A$.\\

\end{theorem}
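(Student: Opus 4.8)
The plan is to reduce every claim to a spectral/polynomial analysis of the map $F_t$. Since $H$ is symmetric PSD in both the $H=A$ and the $H=AA^T$ cases, write its spectral decomposition $H=\sum_j \lambda_j v_jv_j^T$ with $\lambda_j\ge 0$ and expand the current residual as $r=\sum_j c_j v_j$. From $F_t(r)=r-\sum_{i=1}^t \alpha_{t,i}(r)H^ir$ one sees immediately that $F_t(r)=\sum_j p(\lambda_j)c_j v_j$ for the polynomial $p(\lambda)=1-\sum_{i=1}^t\alpha_i\lambda^i$, and that the auxiliary equation (\ref{eqcoeff}) is exactly the normal equation for choosing $\alpha$ so that $p$ minimizes $\Vert F_t(r)\Vert^2=\sum_j p(\lambda_j)^2c_j^2$ over all degree-$\le t$ polynomials with $p(0)=1$. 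Two structural facts drop out and are used throughout: (i) since $p(0)=1$, the component of $r$ in $\ker H$ is left unchanged by $F_t$; and (ii) $\phi_1(r)=r^THr=\sum_j\lambda_j c_j^2$ equals $\Vert A^Tr\Vert^2$ when $H=AA^T$, and satisfies $\Vert Ar\Vert^2\le\lambda_{\max}\phi_1(r)$ when $H=A$. Fact (ii) is what converts the stopping test $\phi_1\le\varepsilon$ into the least-square residual bounds (\ref{twocon})–(\ref{thmeq4ggg}) after taking a square root, using $A^Tb-A^TAx_k=A^Tr_k$.

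For part (1) I would first analyze a single order-$1$ step. A direct computation gives $\Vert r\Vert^2-\Vert F_1(r)\Vert^2=\phi_1^2/\phi_2$; substituting $y=H^{1/2}r$ and invoking the Kantorovich inequality yields $\Vert F_1(r)\Vert/\Vert r\Vert\le(\kappa-1)/(\kappa+1)$ when $H$ is positive definite, hence $O(\kappa\ln\varepsilon^{-1})$ order-$1$ steps. To obtain the $1/t$ speedup I would use the optimality of $F_t$: the degree-$t$ polynomial obtained by composing any $t$ consecutive order-$1$ steps is feasible in the order-$t$ minimization, so one order-$t$ step reduces $\Vert r\Vert$ at least as much as $t$ order-$1$ steps. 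Therefore $k=O((\kappa/t)\ln\varepsilon^{-1})$ suffices and, since $\Vert Ax_k-b\Vert=\Vert r_k\Vert$, (\ref{onecon}) follows. The assertions that ${\rm rank}(A)=m$ forces $x_k$ to converge to a solution and that $A^Tb-A^TAx_k$ always tends to zero then follow from $A^Tr_k\to 0$: the telescoping sum $\sum_k(\Vert r_k\Vert^2-\Vert r_{k+1}\Vert^2)$ is finite, its terms $\phi_1^2/\phi_2\ge\phi_1/\lambda_{\max}$ force $\phi_1(r_k)\to 0$, and fact (ii) converts this to $A^Tr_k\to 0$.

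The singular case, parts (2)–(3), is where the real work lies and is the step I expect to be the main obstacle. Using fact (i), split $r_k=r_{\mathrm{null}}+s_k$ with $r_{\mathrm{null}}\in\ker H$ fixed, so only $s_k$, supported on the positive eigenvectors (effective condition number $\kappa^+$), moves. The telescoping $\Vert r_k\Vert^2-\Vert r_{k+1}\Vert^2=\phi_1(r_k)^2/\phi_2(r_k)\ge\phi_1(r_k)/\lambda_{\max}$ shows that while the test fails we have $\phi_1(r_k)>\varepsilon$, so the monotone quantity $\Vert s_k\Vert^2$ drops by at least $\varepsilon/\lambda_{\max}$ each step; bounding the budget $\Vert s_0\Vert^2\le\phi_1(r_0)/\lambda^+_{\min}$ gives the sublinear count $O(\kappa^+/\varepsilon)$ of part (2), after which $\phi_1\le\varepsilon$ and fact (ii) yields (\ref{twocon}). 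For part (3) I would track the intermediate order-$1$ sub-iterates $F_1^{\circ j}(r')$ inside one order-$t$ step and stop at the first $j$ with $\phi_1(F_1^{\circ j}(r'))\le\varepsilon$; the per-step telescoping now applies directly to these $t$ sub-iterates, injecting the factor $1/t$ and giving $O(\kappa^+\lambda/(t\varepsilon))$ with $\widehat x_k$ the accumulated iterate of (\ref{widehatxB}). The delicate points are verifying that the auxiliary system is solvable on $\mathrm{range}(H)$ so that $F_t$ remains well defined when $H$ is singular, and that the uniform bound $\phi_2\le\lambda_{\max}\phi_1$ legitimately drives the whole estimate.

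Finally, for part (4) the key observation is that when $H=AA^T$ every update direction in (\ref{CTAGEN7A}) and (\ref{widehatxB}) lies in $\mathrm{range}(A^T)$, so $x_0=A^Tw_0$ forces $x_k=A^Tw_k$ and $\widehat x_k=A^T\widehat w_k$ for all $k$; since $\mathrm{range}(A^T)$ is closed and the minimum-norm solution is the unique solution lying in it, boundedness of $\{w_k\}$ (resp. $\{\widehat w_k\}$) lets me extract a convergent subsequence whose limit both solves $Ax=b$ and lies in $\mathrm{range}(A^T)$, hence equals $x_*$, and convergence of the whole sequence follows from uniqueness of $x_*$ together with the already-established convergence of the residuals. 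When ${\rm rank}(A)=m$, $H$ is positive definite and part (1) gives convergence of $x_k$, which is automatically the minimum-norm solution once $x_0\in\mathrm{range}(A^T)$. Part (5) is a bookkeeping count: the vectors $H^ir$ for $i=0,\dots,t$ are formed by $t$ successive applications of $H$ at $O(N)$ each, the moments $\phi_i=(H^ar)^T(H^br)$ cost $O(tN)$, solving the $t\times t$ auxiliary system for its minimum-norm solution costs $O(t^3)$, and assembling $r''$ and $x''$ from the stored vectors is $O(tN)$, for a total of $O(Nt+t^3)$ per iteration.
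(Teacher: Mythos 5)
The paper never proves Theorem~\ref{thmone}: it is stated as a summary of results imported from \cite{kalantari23} (see the opening of Section~\ref{sec3}), and no proof environment follows it. So there is no in-paper argument to compare you against; your proposal has to be judged on its own merits, and on those merits it is essentially sound and follows the natural route. Your central device --- viewing $F_t(r)$ as $p(H)r$ where $p$ ranges over degree-$\le t$ polynomials with $p(0)=1$, and recognizing the auxiliary equation (\ref{eqcoeff}) as the normal equations of the least-squares problem $\min_p\Vert p(H)r\Vert$ --- is correct, and it cleanly delivers all the key mechanisms: the Kantorovich bound $\Vert F_1(r)\Vert\le\frac{\kappa-1}{\kappa+1}\Vert r\Vert$ via $\Vert r\Vert^2-\Vert F_1(r)\Vert^2=\phi_1^2/\phi_2$, the $1/t$ acceleration by noting that the composition of $t$ order-one steps is a feasible degree-$t$ polynomial so that $\Vert F_t(r)\Vert\le\Vert F_1^{\circ t}(r)\Vert\le\big(\frac{\kappa-1}{\kappa+1}\big)^t\Vert r\Vert$, the kernel-splitting plus telescoping argument (using $\phi_2\le\lambda_{\max}\phi_1$, hence a decrease of at least $\varepsilon/\lambda_{\max}$, respectively $t\varepsilon/\lambda_{\max}$, per step while the test fails) for the singular bounds of parts (2)--(3), the identity $\phi_1=\Vert A^Tr\Vert^2$ (for $H=AA^T$) and $\phi_2\le\lambda_{\max}\phi_1$ (for $H=A$) that converts the stopping test into (\ref{twocon})--(\ref{thmeq4ggg}), the $\mathrm{range}(A^T)$-invariance of the updates (\ref{CTAGEN7A}) and (\ref{widehatxB}) for part (4), and the operation count in part (5).

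One step deserves repair. In the full-rank claims (the preamble and part (4)) you repeatedly slide from convergence of residuals to convergence of iterates: ``$\mathrm{rank}(A)=m$ forces $x_k$ to converge \dots follows from $A^Tr_k\to 0$'' and ``part (1) gives convergence of $x_k$.'' What your telescoping argument actually yields is $A^Tr_k\to 0$, hence $r_k\to 0$ (injectivity of $A^T$ when $\mathrm{rank}(A)=m$), i.e.\ $Ax_k\to b$; this does not by itself say $\{x_k\}$ converges. The fix is available from machinery you already set up: for $H=AA^T$ every iterate lies in the affine set $x_0+\mathrm{range}(A^T)$, and $A$ restricted to that set is injective with $\Vert Ax-Ay\Vert\ge\sigma_{\min}\Vert x-y\Vert$ for $x,y$ in it, so $Ax_k\to b$ makes $\{x_k\}$ Cauchy and identifies its limit (equivalently, write $x_k=x_0+A^Tu_k$ and note $AA^Tu_k\to b-Ax_0$ with $AA^T$ invertible); for $H=A$ positive definite simply $x_k=A^{-1}(b-r_k)\to A^{-1}b$. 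With that one-line patch inserted in the preamble and in part (4), your outline is a complete and correct proof of the theorem at the level of detail one would expect.
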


\begin{remark}  According to Theorem \ref{thmone}  when $H$ is invertible  the worst-case bound dependence of CTA on $\varepsilon$ is proportional to $\ln (1/\varepsilon)$. When $H$ is singular the worst-case bound is proportional to $1/\varepsilon$ or $1/\varepsilon^2$ when $Ax=b$ is solvable. Nevertheless, on the one hand a powerful aspect of the iterations of $F_t$ is that whether or not $A$ is invertible, full-rank  or arbitrary rank, bounds are available to compute approximate solution of the normal equation.   On the other hand, the worst-case bounds are not tight so that the actual average performance must be much better.  If $A$ is not invertible or full row-rank but $Ax=b$ is solvable, there is no need to compute $\widehat x_k$ so that Algorithm \ref{1.1} need not be modified. However, if $Ax=b$ is not solvable and in each iteration of the algorithm we compute $\widehat x_k$, then the iteration bound is faster by a factor of $1/t$ so that it may well be worth computing it.  In fact as discussed in \cite{kalantari23},  the derived complexity bounds
are not optimal and our computational results support this. From the practical point of view, Algorithm \ref{1.1} has many desired features.  Even for small values of $t$ it perform quite well on variety of matrices tested. Finally, we remark that while part (4) of the theorem states that under some conditions $x_k$ converges to the minimum-norm solution of $Ax=b$, CTA does not offer such a result for computing the mininmum-norm solution to the normal equation. However, a version of the Triangle Algorithm from \cite{kalantari23} to be described in the next section has the property that given an $\varepsilon$-approximate solution it computes an $\varepsilon$-approximate minimum-norm solution as defined in Definition \ref{minnormapp}
\end{remark}

The next theorem offers properties of collective use of the CTA family proved in \cite{kalantari23}.\\

\begin{theorem}  Given $r_0=b-Ax_0 \not =0$, suppose its  minimal polynomial with respect to $H$ has degree $s$. Then the {\it point-wise orbit}, $\{F_1(r_0), \dots, F_t(r_0) \}$ satisfies the following:

(i) $\Vert F_1(r_0) \Vert > \cdots >  \Vert F_{s}(r_0) \Vert=\Vert F_{s+1}(r_0) \Vert = \cdots = \Vert F_{m}(r_0) \Vert$.

(ii) $Ax=b$ is solvable if any only if $F_{s}(r_0)=0$.

(iii) $A^TAx=A^Tb$ is solvable exclusively if and only if $F_{s}(r_0) \not= 0$ and $A^T F_s(r_0)=0$.

(iv) The sequence is computable in $O(Ns+s^3)=O(m^2(m+n))$ operations.
\end{theorem}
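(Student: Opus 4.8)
The plan is to translate every quantity in the statement into weighted polynomial approximation on the spectrum of $H$, which is symmetric positive semidefinite in both cases $H=A$ and $H=AA^T$. First I would note that, by definition of $F_t$, we have $F_t(r_0)=r_0-\sum_{i=1}^t\alpha_{t,i}(r_0)H^i r_0=p_t(H)r_0$ for a polynomial $p_t$ with $\deg p_t\le t$ and $p_t(0)=1$, and conversely $\|F_t(r_0)\|=\min\{\|p(H)r_0\|:\deg p\le t,\ p(0)=1\}$, since minimizing over $\alpha$ is exactly minimizing over such normalized polynomials. Diagonalizing $H=\sum_j\lambda_j P_j$ and setting $u_j=P_j r_0$, the distinct eigenvalues with $u_j\neq 0$ are $\lambda_1>\cdots>\lambda_s\ge 0$, and the minimal polynomial of $r_0$ with respect to $H$ is $\prod_{j=1}^s(x-\lambda_j)$, confirming its degree is the given $s$. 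Then $\|p(H)r_0\|^2=\sum_{j=1}^s w_j\,p(\lambda_j)^2$ with weights $w_j=\|u_j\|^2>0$, so the entire orbit reduces to a nested family of weighted least-squares problems over normalized polynomials of increasing degree.

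For part (i), the non-increase $\|F_1(r_0)\|\ge\|F_2(r_0)\|\ge\cdots$ is immediate from the nesting of the feasible sets $\Pi_t=\{p:\deg p\le t,\ p(0)=1\}$. To get the strict decreases and locate the plateau I would pass to the projection picture $\|F_t(r_0)\|=\operatorname{dist}(r_0,V_t)$ with $V_t=\operatorname{span}\{Hr_0,\dots,H^t r_0\}$, recognizing these as the MINRES residual norms for $H$ with initial residual $r_0$. The dimension of $V_t$ increases by one at each step until the Krylov space of $r_0$ saturates at its grade, and by the standard finite-termination analysis the residual norm strictly decreases at each such step; once no new direction is adjoined the value is constant through $t=m$. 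The limiting value equals the norm of the projection of $r_0$ onto the zero-eigenspace $\ker H$: it is $0$ exactly when $r_0$ has no zero-eigencomponent (equivalently $0$ is not a spectral node of $r_0$), and positive otherwise. This dichotomy also governs the exact index at which strict decrease yields to the plateau, and matching that index to the stated $s$ is the one point requiring genuine care.

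Parts (ii) and (iii) then follow by identifying the limiting value with a residual. Since each $F_t(r_0)=b-Ax_t$ is a genuine residual through the construction of $x''$ in (\ref{CTAGEN7A}), the plateau value $F_s(r_0)$ is the orthogonal projection of $r_0$ onto $\ker H$. For (ii), $Ax=b$ is solvable iff $b\in\operatorname{range}(A)$ iff the component of $r_0$ in $\ker A^T$ (the zero-eigenspace of $H=AA^T$, or $\ker A$ when $H=A$) vanishes, which by the spectral computation is exactly $F_s(r_0)=0$. For (iii), the normal equation is always solvable, so ``solvable exclusively'' means $Ax=b$ is not solvable, i.e. $F_s(r_0)\neq 0$ by (ii); and since $F_s(r_0)\in\ker H\subseteq\ker A^T$, we automatically have $A^T F_s(r_0)=0$, signifying that $x_s$ solves the normal equation. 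The two conditions together therefore characterize exclusive solvability. Finally, for (iv), the moments $\phi_i=r_0^T H^i r_0$ needed for all auxiliary systems (\ref{eqcoeff}) up to order $s$ are obtained from $O(s)$ matrix--vector products with $H$, each $O(N)$, while solving the nested $t\times t$ Hankel systems for $t\le s$ costs $O(s^3)$; using $s\le m$ and $N\le mn$ this yields $O(Ns+s^3)=O(m^2(m+n))$.

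The main obstacle I anticipate is precisely the strictness-and-plateau bookkeeping in (i): proving that each adjoined degree genuinely lowers the minimum until the floor, and pinning the onset of equality to the degree $s$, requires the projection/Krylov saturation argument together with the case split on whether $0$ is a spectral node of $r_0$. Everything else is routine once the reduction to weighted polynomial least-squares on the spectrum is in place, and the consistent-versus-inconsistent dichotomy that controls (i) is exactly the one made explicit by (ii) and (iii).
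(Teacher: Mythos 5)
Note first that the paper itself offers no proof of this theorem: it is quoted from \cite{kalantari23}, so your proposal has to stand on its own. Your reduction is the right framework and is correct as far as it goes: $\Vert F_t(r_0)\Vert=\min\{\Vert p(H)r_0\Vert:\deg p\le t,\ p(0)=1\}$, and after diagonalizing, $\Vert p(H)r_0\Vert^2=\sum_{j=1}^{s}w_j\,p(\lambda_j)^2$ with positive weights on the distinct spectral nodes of $r_0$. Granting part (i), your arguments for (ii) and (iii) (plateau vector $=$ projection of $r_0$ onto $\ker H$, and $\ker H=\ker A^T$ resp.\ $\ker A$) and for (iv) (moments via $O(s)$ matrix--vector products, nested Hankel solves updated incrementally to avoid the naive $O(s^4)$) are sound.

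The genuine gap is in part (i), and it is exactly the step you defer. You assert that strict decrease of $\mathrm{dist}(r_0,V_t)$ follows because $\dim V_t$ grows by one per step, ``by the standard finite-termination analysis.'' That implication is false for general symmetric $H$: take $H=\mathrm{diag}(1,-1)$ and $r_0=(1,1)^T/\sqrt{2}$; then $V_1=\mathrm{span}\{(1,-1)^T\}$ has dimension one, yet $\Vert F_1(r_0)\Vert=\Vert r_0\Vert$ --- MINRES is well known to stagnate on symmetric indefinite systems, so dimension growth alone proves nothing. What makes strict decrease true here is positive semidefiniteness of $H$ (nonnegativity of the nodes $\lambda_j$), which your argument never invokes. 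The repair is short but essential: if $\Vert F_{t+1}(r_0)\Vert=\Vert F_t(r_0)\Vert$, then an optimal $p_t^*$ for degree $t$ is also optimal for degree $t+1$, so the normal equations give $\sum_j w_j\lambda_j^i\,p_t^*(\lambda_j)=0$ for $i=1,\dots,t+1$; expanding $\sum_j w_j\lambda_j\,p_t^*(\lambda_j)^2$ via $p_t^*(\lambda_j)=1+\sum_{i=1}^t c_i\lambda_j^i$ and using these relations shows $\sum_j w_j\lambda_j\,p_t^*(\lambda_j)^2=0$, which, since $\lambda_j\ge 0$ and $w_j>0$, forces $p_t^*$ to vanish at every \emph{positive} node, i.e., the floor value has already been attained. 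Hence stagnation can occur only at the floor, which yields the strict-then-constant pattern. Moreover, once you run this argument you see that the plateau begins at the number of positive nodes of $r_0$: this equals $s$ when $0$ is not a node (the solvable case, floor $=0$) but $s-1$ when $0$ is a node --- so the index bookkeeping you flagged as ``requiring genuine care'' is not routine; it is a real case split tied to the solvability dichotomy of (ii)--(iii), and without the PSD stagnation lemma above, part (i) of your proof is missing its central ingredient.
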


\subsection{The Dynamics of Iterations of First-Order CTA} \label{sec3xx}
Before presenting the computational results, we study the dynamics of the iterates of $F_1$.  We consider solving $Ax=b$ via iterations of $F_1(r)=r- (r^T H r)/r^T H^2 r) H r$, where as before  $H=AA^T$ when $A$ is a general $m \times n$ matrix, and $H=A$ when $A$ is $m \times m$ symmetric PSD.  Since we wish to analyze some properties of the dynamics of iterations of $F_1(r)$ we ignore the fact that each residual is assumed to be of the form $r_k=b-Ax_k$ since $x_k$ does not play a role in convergence.

\begin{definition}

\noindent (i) To each $r_0 \in \mathbb{R}^m$ we associate a sequence  $\{r_k=F_1(r_{k-1}): k=1, 2, \dots\}$, referred as the {\it residual orbit} at $r_0$ and denote it by $O^+(r_0, H)$.

\noindent (ii) Suppose  $H=U \Lambda U^T$, where $U=[u_1, \dots, u_m]$ is the matrix of orthogonal eigenvectors and $\Lambda = {\rm diag}(\lambda_1, \dots, \lambda_m)$ the diagonal matrix of eigenvalues. Assume $H$ is positive definite and without loss of generality $0 < \lambda_1 \leq \lambda_2 \leq \cdots \leq \lambda_m$.   Given $r_0 \in \mathbb{R}^m$, let $y_0=U^Tr_0$ and let $O^+(y_0, \Lambda)$, i.e.  the orbit corresponding to the iteration of $F_1$, where given $y_k \in \mathbb{R}^m$, the next iterate is defined according to  the recursive formula
\begin{equation} \label{subseceq3.3}
y_{k+1}= y_k - \frac{y_k^T \Lambda y_k}{y_k^T \Lambda^2 y_k} \Lambda y_k.
\end{equation}

\noindent (iii)  Given distinct pair of eigenvalues  $\lambda_i < \lambda_j$ with corresponding eigenvectors $u_i, u_j$, set
\begin{equation} \label{c-zero}
\alpha_i= \sqrt{\frac{\lambda_j}{\lambda_i+ \lambda_j}}, \quad
\alpha_j = \sqrt{\frac{\lambda_i}{\lambda_i+ \lambda_j}}.
\end{equation}
Let
\begin{equation} \label{c-one}
r^{+}= \alpha_i u_i + \alpha_j u_j, \quad  r^{-}= \alpha_i u_i - \alpha_j u_j.
\end{equation}
Define the {\it $ij$-critical lines} to be the pair of lines
$$l_{ij}^+=\{ \alpha r^{+} : \alpha \in \mathbb{R}\}, \quad
l_{ij}^-=\{ \alpha  r^{-} :  \alpha \in \mathbb{R}\}.$$
\end{definition}

The  theorem below sheds lights on the dynamics of the orbits under iterations of $F_1$. The convergence of orbits for points on and near the $ij$-critical lines will be slower than other locations, while it will be fastest at eigenvectors and points near them. Clearly, in dimension $2$ there is only one pair of critical lines. The convergence properties for this dimension  can actually be seen in Figure \ref{tfig} showing the dynamics for solving a $2 \times 2$ positive definite diagonal matrix $A$.

\begin{figure}[!htb]
\centering
\includegraphics[width=.32\linewidth]{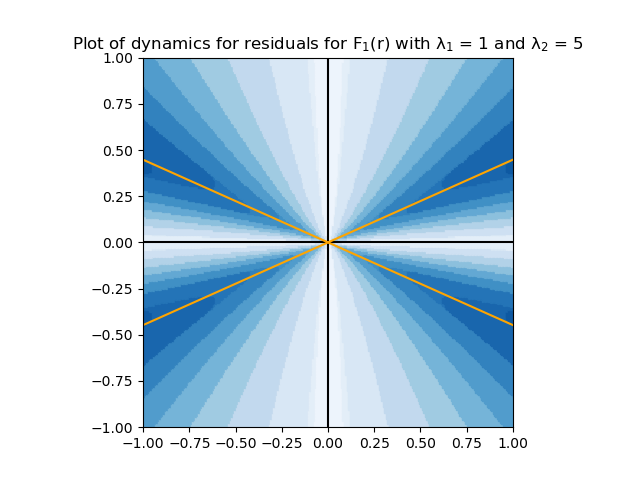} \includegraphics[width=.32\linewidth]{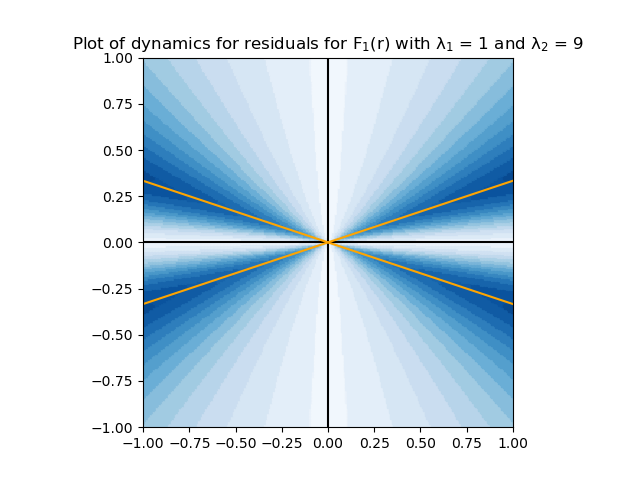}
\includegraphics[width=.32\linewidth]{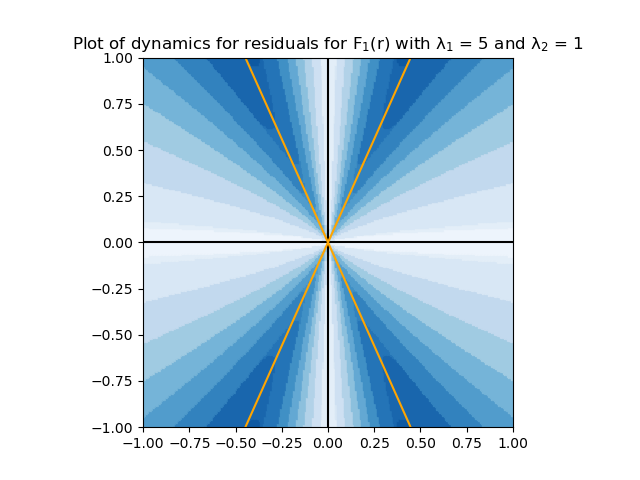}\\ \includegraphics[width=.32\linewidth]{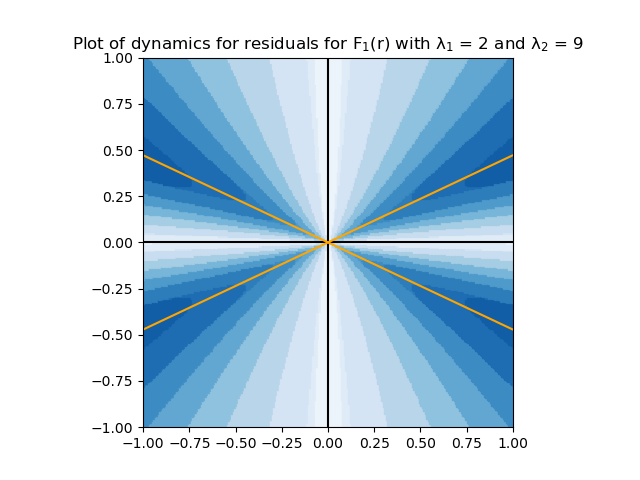}
\includegraphics[width=.32\linewidth]{Figure_1_5.png} \includegraphics[width=.32\linewidth]{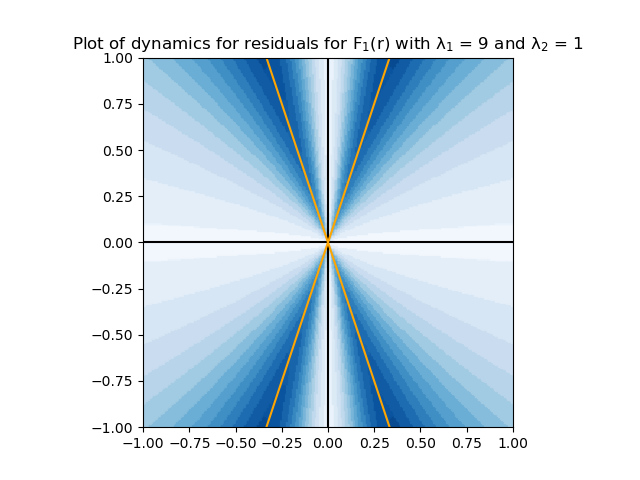}\\
\caption{Dynamics of  iterations of $F_1$ with respect to a $2 \times 2$ diagonal PSD matrix. The eigenvalues are given on top of each image. The critical lines are drawn as yellow crossing lines. The axis correspond to eigenvectors.}
\label{tfig}
\end{figure}

\begin{theorem} \label{thmdynamic} Assume $H$ is positive definite with condition number $\kappa$.\

(i) $F_1(r)$ is {\it homogeneous} of degree one: for any scalar $\alpha$ and $r \in \mathbb{R}^m$, $F_1(\alpha r) = \alpha F_1(r)$. In particular, for any nonzero $r_0 \in \mathbb{R}^m$,  the orbit at $r_0$ is $\Vert r_0 \Vert $ times the orbit at $r_0/\Vert r_0 \Vert$.  It follows that the dynamics of $F_1(r)$ on the unit ball is identical with its dynamic on a ball of arbitrary positive radius at the origin.

(ii) Given $r_0$ and $y_0=U^Tr_0$, for all $k$, $\Vert r_k \Vert = \Vert y_k \Vert$  and if $r_k \not =0$,
\begin{equation} \label{subseceq3.4}
\frac{\Vert r_{k+1} \Vert^2}{\Vert r_k \Vert^2 }=1- \frac{(r_k^T H r_k)^2}{r_k^T H^2 r_k} \frac{1}{ \Vert r_k \Vert^2}= \frac{\Vert y_{k+1} \Vert^2 }{\Vert y_k \Vert^2}=1- \frac{(y_k^T\Lambda y_k)^2}{y_k^T \Lambda^2 y_k} \frac{1}{ \Vert y_k \Vert^2}.
\end{equation}
In particular, $O^+(r_0, H)$ and  $O^+(y_0, \Lambda)$ both converge to zero having the same convergence rate.

(iii) Suppose $r_0 \not =0$ is an eigenvector of $H$, say $Hr_0= \lambda r_0$. Then $r_1=F_1(r_0)=0$ so that it converges in one step. Suppose $r_0$ is an $\varepsilon$-approximate unit eigenvector, i.e.
\begin{equation} \label{eigapp}
H r_0 =  \lambda r_0+ e, \quad \Vert r_0 \Vert =1, \quad  \Vert e \Vert \leq \varepsilon.
\end{equation}
If $\varepsilon \leq \lambda^2/ 4 \kappa$, then
\begin{equation} \label{subseceq3.6}
\Vert r_1 \Vert \leq  2\sqrt{\frac{\kappa\varepsilon}{\lambda}}.
\end{equation}

(iv) For any $m \geq 2$ there are at most $m(m-1)$ distinct critical lines.
Suppose $r_k$ is on a $ij$-critical line, $\lambda_i < \lambda_j$. Set
$$\kappa_{ij} = \frac{\lambda_j}{\lambda_i}, \quad
\rho_{ij}= \frac{\lambda_j - \lambda_i}{\lambda_j +\lambda_i}= \frac{\kappa_{ij}-1}{\kappa_{ij}+1}.$$
Then
\begin{equation} \label{c-two}
F_1(\alpha r^{+})=  \alpha \rho_{ij} r^{-}, \quad F_1(\alpha r^{-})=  \alpha \rho_{ij} r^{+}.
\end{equation}
Thus the orbit of points on the critical lines zig-zag and if $r_0$ lies on the $ij$-critical lines,  the $k$-th residual satisfies
\begin{equation} \label{c-three}
{\Vert r_{k} \Vert} = \rho_{ij}^k \Vert r_{0} \Vert.
\end{equation}
In particular, if $r_0$ is on the $1n$-critical line (i.e. $i=1$, $j=n$) the worst-case complexity of $F_1$ is attained:
\begin{equation} \label{c-five}
\Vert r_{k} \Vert= \rho_{1n}^k \Vert r_{0} \Vert= \bigg (\frac{\kappa-1}{\kappa+1} \bigg )^k  \Vert r_{0} \Vert.
\end{equation}
However, if $r_k$ is on an $ij$-critical line the convex combination $\overline r_k =\alpha_* r_k + (1-\alpha_*) r_{k+1}$,  $\alpha_* = \rho_{ij}/(\rho_{ij}+1)$, satisfies $F_1(\overline r_k)=0$.
\end{theorem}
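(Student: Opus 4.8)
\emph{Overall strategy.} The unifying device is to pass to the eigenbasis of $H$. Writing $H=U\Lambda U^T$ and $y=U^Tr$, every scalar appearing in $F_1$ is orthogonally invariant: $r^THr=y^T\Lambda y$ and $r^TH^2r=y^T\Lambda^2 y$, while $U^THr=\Lambda y$. Consequently $F_1$ conjugates to the map (\ref{subseceq3.3}) on the $y$-coordinates, and each of the four parts reduces to an algebraic statement about the diagonal iteration. I would first dispose of the homogeneity claim (i), then establish the norm identity and convergence in (ii), then the eigenvector estimates in (iii), and finally the explicit critical-line dynamics in (iv).

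\emph{Parts (i) and (ii).} Homogeneity is immediate: substituting $\alpha r$ into $F_1$, the factor $\alpha^2$ cancels between numerator and denominator of the coefficient, leaving one factor of $\alpha$ overall, and the statements about orbits follow by induction on $k$. For (ii), the relation $y_k=U^Tr_k$ propagates through the iteration by the invariances above, so $\Vert r_k\Vert=\Vert y_k\Vert$ for all $k$. Expanding $\Vert r_k-c\,Hr_k\Vert^2$ with $c=\phi_1/\phi_2$ and using $2c\,\phi_1-c^2\phi_2=\phi_1^2/\phi_2$ yields the exact decrease identity (\ref{subseceq3.4}), and the $y$-version is the same identity read in eigencoordinates. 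To upgrade monotone decrease to convergence to zero I would bound the ratio away from $1$: writing $w_i=y_{k,i}^2/\Vert y_k\Vert^2$ as a probability vector supported on $[\lambda_1,\lambda_m]$, the elementary inequality $\lambda^2\le(\lambda_1+\lambda_m)\lambda-\lambda_1\lambda_m$ valid on that interval gives, after taking the weighted average, the Kantorovich-type bound $\phi_1^2/(\phi_2\Vert r_k\Vert^2)\ge 4\lambda_1\lambda_m/(\lambda_1+\lambda_m)^2$, whence the ratio is at most $((\kappa-1)/(\kappa+1))^2<1$ and both orbits converge geometrically at the same rate.

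\emph{Parts (iii) and (iv).} For (iii), if $Hr_0=\lambda r_0$ then $\phi_1/\phi_2=1/\lambda$ and $F_1(r_0)=r_0-r_0=0$. For the perturbed case I would write $Hr_0=\lambda r_0+e$ and compute $\phi_1=\lambda+r_0^Te$ and $\phi_2=\lambda^2+2\lambda r_0^Te+\Vert e\Vert^2$; the cleanest route is the exact identity from (ii) with $\Vert r_0\Vert=1$, giving $\Vert r_1\Vert^2=1-\phi_1^2/\phi_2=(\Vert e\Vert^2-(r_0^Te)^2)/\phi_2$. The numerator is at most $\Vert e\Vert^2\le\varepsilon^2$ by Cauchy--Schwarz, and the hypothesis $\varepsilon\le\lambda^2/(4\kappa)$ keeps $\phi_2$ bounded below by a positive multiple of $\lambda^2$, delivering a bound of the required order $\sqrt{\kappa\varepsilon/\lambda}$. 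For (iv), the count $m(m-1)$ is $2\binom{m}{2}$, one pair of lines per unordered pair of eigenvalues. The core is a two-dimensional computation in $\mathrm{span}\{u_i,u_j\}$: using $\alpha_i^2=\lambda_j/(\lambda_i+\lambda_j)$ and $\alpha_j^2=\lambda_i/(\lambda_i+\lambda_j)$ one finds $\phi_1=2\lambda_i\lambda_j/(\lambda_i+\lambda_j)$ and $\phi_2=\lambda_i\lambda_j$, so $c=2/(\lambda_i+\lambda_j)$ with $1-c\lambda_i=\rho_{ij}$ and $1-c\lambda_j=-\rho_{ij}$; substituting gives $F_1(r^{+})=\rho_{ij}r^{-}$ and $F_1(r^{-})=\rho_{ij}r^{+}$, and homogeneity extends this to $\alpha r^{\pm}$. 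The zig-zag decay (\ref{c-three}) and the worst case (\ref{c-five}) follow by iteration, the latter because $\rho_{ij}$ is largest for $i=1,j=n$. Finally, the convex combination lands on an eigenvector: with $\alpha_*=\rho_{ij}/(\rho_{ij}+1)$ the $u_j$-components of $\alpha_* r^{+}$ and $(1-\alpha_*)\rho_{ij}r^{-}$ cancel, so $\overline r_k$ is a multiple of $u_i$ and $F_1(\overline r_k)=0$ by the eigenvector case.

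\emph{Main obstacle.} The genuinely delicate step is the perturbed-eigenvector estimate in (iii): one must track the error $e$ through both $\phi_1$ and $\phi_2$, guarantee the denominator cannot collapse (which is exactly what the smallness condition $\varepsilon\le\lambda^2/(4\kappa)$ buys), and then cast the bound in the stated $\sqrt{\kappa\varepsilon/\lambda}$ form, even though the exact-identity route appears to give something sharper. The convergence claim in (ii) is the other substantive point, since monotone decrease alone does not force the limit to be zero; the Kantorovich-type lower bound on $\phi_1^2/(\phi_2\Vert r_k\Vert^2)$ is what closes that gap.
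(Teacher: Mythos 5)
Your proposal is correct, and its skeleton is the same as the paper's: pass to eigencoordinates $y=U^Tr$, verify the orthogonal-invariance of $\phi_1,\phi_2$, and do the explicit two-dimensional algebra on ${\rm span}\{u_i,u_j\}$. Parts (i) and (iv) match the published proof essentially line for line, including the values $\phi_1=2\lambda_i\lambda_j/(\lambda_i+\lambda_j)$, $\phi_2=\lambda_i\lambda_j$, the coefficients $1-c\lambda_i=\rho_{ij}$, $1-c\lambda_j=-\rho_{ij}$, and the cancellation of the $u_j$-components in the final convex-combination claim. The differences are in (ii) and (iii), and both are genuine improvements. In (ii), the paper derives the identity (\ref{subseceq3.4}) and then simply asserts the ``converge to zero with the same rate'' clause; your Kantorovich-type bound $\phi_1^2/(\phi_2\Vert r_k\Vert^2)\ge 4\lambda_1\lambda_m/(\lambda_1+\lambda_m)^2$ (obtained from $\lambda^2\le(\lambda_1+\lambda_m)\lambda-\lambda_1\lambda_m$ on $[\lambda_1,\lambda_m]$) actually proves it, with the explicit geometric rate $\Vert r_{k+1}\Vert/\Vert r_k\Vert\le(\kappa-1)/(\kappa+1)$, making the statement self-contained rather than deferred to the cited reference. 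In (iii), the paper expands $\Vert r_1\Vert^2=1-\phi_1^2/\phi_2$ and bounds numerator and denominator term by term via Cauchy--Schwarz; your exact identity $\phi_2-\phi_1^2=\Vert e\Vert^2-(r_0^Te)^2$ collapses that computation and is in fact sharper, giving $\Vert r_1\Vert^2\le \varepsilon^2/(\lambda^2-2\lambda\varepsilon)=O(\varepsilon^2/\lambda^2)$ rather than the stated $O(\kappa\varepsilon/\lambda)$. One caveat you share with the paper: keeping the denominator $\phi_2$ bounded below by a fixed multiple of $\lambda^2$ really requires $\varepsilon$ small relative to $\lambda$ itself, and the hypothesis $\varepsilon\le\lambda^2/4\kappa$ secures this only under an implicit normalization (the paper's own chain, which passes through $4\lambda_{\max}\varepsilon/\lambda^2$, tacitly needs $\varepsilon\le\lambda^2/4\lambda_{\max}$, i.e.\ $\lambda_{\min}\le 1$); this is a blemish in the theorem's hypothesis rather than a defect of your argument, and your route degrades no worse than the paper's.
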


\begin{proof}\

(i): This is straightforward.

(ii):  Using induction on $k$, and substituting $r_k=Uy_k$ it follows that  $r_{k+1}=Uy_{k+1}$. Hence $y_{k+1}=U^Tr_{k+1}$.  Next using that $\Vert r_k \Vert = \Vert y_k \Vert$ and substituting  $\Vert r_{k+1} \Vert^2= r_{k+1}^Tr_{k+1}$, $\Vert y_{k+1} \Vert^2= y_{k+1}^Ty_{k+1}$ we get (\ref{subseceq3.4}).

(iii):  If $r_0$ is an eigenvector, trivially $F_1(r_0)=0$. If $r_0$ is an approximate eigenvector satisfying (\ref{eigapp}),
\begin{equation}  \label{eqxx1}
H^2r_0= \lambda^2 r_0+ \lambda e + He, \quad
r_0^THr_0=\lambda \Vert r_0 \Vert^2+r_0^Te= \lambda + r_0^Te, \quad
r_0^TH^2r_0=\lambda^2+ \lambda r_0^Te+r_0^THe.
\end{equation}
Using (\ref{eqxx1}) and since $\Vert r_0 \Vert =1$,  we get
\begin{equation}  \label{eqxx4}
\Vert r_1 \Vert^2= 1- \frac{(r_0^THr_0)^2}{r_0^TH^2r_0} = 1- \frac{\lambda^2 + 2 \lambda r_0^Te + (r_0^Te)^2}{\lambda^2 + \lambda r_0^Te +r_0^T He}.
\end{equation}
From the inequality in (\ref{eigapp}) and Cauchy-Schwarz inequality we get
\begin{equation}  \label{eqxx5}
|r_0^Te| \leq \Vert r_0 \Vert \cdot \Vert e \Vert \leq \varepsilon, \quad
r_0^THe \leq \Vert r_0 \Vert \cdot \Vert H\Vert \cdot \Vert e \Vert \leq \lambda_{\max} \varepsilon.
\end{equation}
Simplifying (\ref{eqxx4}), while using (\ref{eqxx5}) we get
\begin{equation}  \label{eqx6}
\Vert r_1 \Vert^2= \frac{-\lambda r_0^Te + r_0^THe- (r_0^Te)^2}{\lambda^2 + \lambda r_0^Te +r_0^T He} \leq
\frac{\lambda \varepsilon + \lambda_{\max} \varepsilon}
{\lambda^2  - \lambda \varepsilon - \lambda_{\max} \varepsilon} \leq
\frac{2\lambda_{\max} \varepsilon}
{\lambda^2  - 2\lambda_{\max} \varepsilon} \leq \frac{4\lambda_{\max} \varepsilon}{\lambda^2} \leq \frac{4\kappa \varepsilon}{\lambda}.
\end{equation}
Hence the proof of (\ref{subseceq3.6}).

(iv):  Suppose the eigenvalues of $H$ are distinct. Then for each distinct pair $i, j$, $i < j$, there is a pair of $ij$-critical lines. Hence there are $m(m-1)$ critical lines. Next we prove (\ref{c-two}).
Since $F_1(r)$ is homogeneous, without loss of generality we assume $\alpha =1$.  From (\ref{c-one}) if $r_k=r^+$, we get
$H r_k = \alpha_i \lambda_iu_i + \alpha_j \lambda_j u_j$,
$H^2 r_k = \alpha_i \lambda^2_iu_i + \alpha_j \lambda^2_j u_j$,
$r_k^THr_k= \frac{2 \lambda_i \lambda_j}{\lambda_i+ \lambda_j}$ and  $r_k^TH^2r_k= \lambda_i \lambda_j$.
From these we get
\begin{equation} \label{c-eight}
F_1(r_k)= \alpha_i u_i + \alpha_j u_j - \frac{2}{\lambda_i + \lambda_j}  (\alpha_i \lambda_iu_i + \alpha_j \lambda_j u_j ).
\end{equation}
Note that
\begin{equation} \label{c-nine}
\alpha_i - \frac{2}{\lambda_i + \lambda_j} \alpha_i\lambda_i
=\rho_{ij} \alpha_i, \quad
\alpha_j - \frac{2}{\lambda_i + \lambda_j} \alpha_j \lambda_j
= -\rho_{ij}\alpha_j.
\end{equation}
It follows that $F_1(r^+)= \rho_{ij}  ( \alpha_i u_i -  \alpha_j u_j)=\rho_{ij} r^{-}$. Similarly  $F_1(r^{-})=\rho_{ij} r^{+}$.  Hence the proof of (\ref{c-two}).  From this the proof of (\ref{c-three}) and (\ref{c-five}) follow.  To prove last part of (vi),  substitution for $\alpha_*$,
if $r_k=\alpha r^{+}$, $\overline r_k =\alpha_* r_k + (1-\alpha_*) r_{k+1}=  \alpha u_i$ and if  $r_k=\alpha r^{-}$,
$\overline r_k =\alpha_* r_k + (1-\alpha_*) r_{k+1}=  \alpha u_j$. In either case $F_1(\overline r_k)=0$.
\end{proof}

\begin{remark} As proved in \cite{kalantari23}, the worst-case of $F_1$ occurs when the initial point is on the $1n$-critical line. The practical application of the above theorem on the dynamics of iterations of $F_1$ is that if computationally we witness that the magnitudes of $r_k$ and $r_{k+1}$ do not differ by much we can take as the next iterate a convex combination of $\alpha r_k +(1+ \alpha) r_{k+1}$ and compute $\alpha$ as
$$\alpha_* = {\rm argmin} \{ \Vert \alpha  F_1(r_k) + (1- \alpha) F_1(r_{k+1}) \Vert, \quad \alpha \in [0,1] \},$$
simply by minimization of a quadratic equation in the interval $[0,1]$. In fact this can be done with iterations of any $F_t$, $t \leq m$.  
More generally one would expect some aspects of the above theorem can be extended for general $F_t(r)$. However, as discussed in \cite{kalantari23} the exact worst-case analysis of $F_t$ for $t >1$ appears to be a complicated optimization problem - open to further investigation - as is the characterization of critical regions as lines or curves.
\end{remark}

\section{The Triangle Algorithm (TA)} \label{sec4}
In this section we describe the {\it Triangle Algorithm} (TA) for solving the approximation problems defined earlier from \cite{kalantari23}. The TA to be described here is specialized version of an algorithm
for the {\it convex hull membership problem} (CHMP):
Given a compact subset $S$ in $\mathbb{R}^m$, a distinguished point $p \in \mathbb{R}^m$, and  $\varepsilon \in (0, 1)$, either compute  
$p' \in C=conv(S)$, the {\it convex hull} of $S$, such that $\|p' - p\| \leq \varepsilon$, or find a hyperplane that separates $p$ from $C$.

\subsection{Technical Foundation of TA}
\label{sec2.1}
In the case of applying the Triangle Algorithm to solving a linear system $Ax=b$, the compact convex set $C$ is the ellipsoid below, the image of the central ball of radius $\rho$ under the linear mapping defined by $A$:
\begin{equation}
E_{A, \rho}= \{ Ax: \Vert x \Vert \leq \rho \}.
\end{equation}
Also, the point $p$ is $b$.  In what follows we state several properties of the general TA in the context of testing if $b \in E_{A, \rho}$. 
Then we describe a version of the general Triangle Algorithm specialized to tests if $b \in E_{A, \rho}$ to within tolerance of $\varepsilon$. 

\begin{definition} \label{defpTA} Given $\rho >0$, $x' \in \mathbb{R}^n$ such that $b'=Ax' \in E_{A, \rho}$, $b' \not =b$,  called {\it iterate}, a point $v \in E_{A, \rho}$ is called a {\it pivot} (at $b'$) if
\begin{equation} \label{pivotTA}
\|b' - v\| \geq \|b - v\| \quad \iff \quad
(b-b')^Tv \geq \frac{1}{2} \big (\Vert b \Vert^2 - \Vert b' \Vert^2 \big).
\end{equation}
If no pivot exists, $b'$ is called a $b$-{\it witness} (or just {\it witness}). A pivot $v$ is called a {\it strict pivot} if
\begin{equation} \label{pivotsTA}
(b-b')^T(v-b) \geq 0.
\end{equation}
In particular, when the three points are distinct, $\angle b'bv$ is at least $\pi/2$.
\end{definition}

The Triangle Algorithm (TA) works as follows: Given $\varepsilon \in (0,1)$, $\rho >0$ and  $x' \in \mathbb{R}^n$ such that $b' \in E_{A, \rho}$, if $\Vert b - b' \Vert \leq \varepsilon$ it stops. If $b'$ is a witness, then $b \not \in E_{A, \rho}$. Otherwise, it computes a pivot $v$ and the next iterate $b''$ as the nearest point to $b$ on the line segment $b'v$. Specifically,

\begin{prop} \label{prop1TA} If $v$ is a strict pivot,
\begin{equation} \label{alph}
    b'' = (1 - \alpha)b' + \alpha v, \quad  \alpha = \frac{(b - b')^T(v - b')}{\|v - b'\|^2}. \qed
\end{equation}
\end{prop}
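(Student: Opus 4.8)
The plan is to recognize $b''$ as the solution of a one-dimensional least-squares problem along the segment $[b', v]$, and then to verify that the strict pivot hypothesis forces the unconstrained minimizer to lie in $[0,1]$, so that the orthogonal projection of $b$ onto the line through $b'$ and $v$ already lands inside the segment and no clamping is needed.

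First I would parametrize the segment as $b(t) = b' + t(v - b')$ for $t \in [0,1]$ and consider the scalar function $g(t) = \|b - b(t)\|^2 = \|(b - b') - t(v - b')\|^2$. Since $b' \neq b$ forces $v \neq b'$ (a pivot with $v=b'$ would give $0 = \|b'-v\| \geq \|b-v\| = \|b-b'\|>0$), this is a strictly convex quadratic in $t$, whose unique unconstrained minimizer is found from $g'(t)=0$, namely
\[
t = \frac{(b-b')^T(v-b')}{\|v-b'\|^2} = \alpha .
\]
Hence $\alpha$ is exactly the orthogonal projection parameter, and $b(\alpha) = (1-\alpha)b' + \alpha v$ is the nearest point to $b$ on the entire line through $b'$ and $v$.

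It then remains to show $\alpha \in [0,1]$, for in that case the minimizer over the segment coincides with the unconstrained minimizer and the stated formula holds. For $\alpha > 0$ I would rewrite the numerator via $v - b' = (v - b) + (b - b')$ to get $(b-b')^T(v-b') = (b-b')^T(v-b) + \|b-b'\|^2$; the strict pivot inequality $(b-b')^T(v-b) \geq 0$ together with $b' \neq b$ makes this strictly positive. For $\alpha \leq 1$ I would observe that, dividing out $\|v-b'\|^2>0$, the inequality is equivalent to $(v-b')^T(v-b) \geq 0$, and the same substitution yields $(v-b')^T(v-b) = \|v-b\|^2 + (b-b')^T(v-b) \geq \|v-b\|^2 \geq 0$, again by the strict pivot condition.

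The only genuine obstacle is the bound $\alpha \leq 1$: a mere pivot guarantees only that advancing toward $v$ decreases the distance to $b$, which does not by itself prevent the orthogonal foot from overshooting the endpoint $v$. The \emph{strict} pivot condition (equivalently, $\angle b'bv \geq \pi/2$) is precisely the extra ingredient that keeps the projection within the segment, and isolating how it enters the inequality $(v-b')^T(v-b) \geq 0$ is the crux of the argument; everything else reduces to the routine minimization of a scalar quadratic.
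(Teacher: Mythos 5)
Your proof is correct and complete: the parametrization of the segment, the identification of $\alpha$ as the unconstrained minimizer of the quadratic $g(t)$, and the two inequalities $\alpha > 0$ (from the pivot/strict-pivot condition plus $b \neq b'$) and $\alpha \leq 1$ (from $(v-b')^T(v-b) = \|v-b\|^2 + (b-b')^T(v-b) \geq 0$) are all sound, and you correctly isolate $\alpha \leq 1$ as the step where strictness of the pivot is genuinely needed. The paper itself states this proposition without proof (it is imported from the reference with the \qed attached to the statement, $b''$ being defined as the nearest point to $b$ on the segment $b'v$), and your argument is exactly the standard one that definition implicitly calls for, so there is nothing to contrast.
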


Triangle Algorithm replaces $b'$ with $b''$ which will necessarily lie in $E_{A, \rho}$. It replace $x'$ with a corresponding $x''$ so that $b''=Ax''$ and repeats the above iteration. The correctness of the TA is due to the following theorem. 

\begin{theorem} \label{thm1TA} {\rm {(Distance Duality)}}
$b \in E_{A, \rho}$ if and only if for each $b' \in E_{A, \rho} \setminus \{b\}$ there exists a (strict) pivot $v \in E_{A, \rho}$. Equivalently, $b \not\in E_{A, \rho}$ if and only if there exists a witness $b' \in E_{A, \rho}$. \qed
\end{theorem}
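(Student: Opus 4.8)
The plan is to prove the two stated equivalences simultaneously by splitting on whether $b \in E_{A,\rho}$, exploiting that $C := E_{A,\rho}$ is convex (the image of a ball under the linear map $A$) and compact (the continuous image of a compact ball). The only genuinely non-elementary ingredient I expect to need is the Hilbert projection theorem for closed convex sets, which supplies both the existence of a nearest point and the obtuse-angle variational inequality; everything else reduces to expanding squared norms.

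First I would dispatch the easy direction. Suppose $b \in C$. For any iterate $b' \in C \setminus \{b\}$, I claim $v = b$ is a strict pivot. Indeed $v \in C$, the pivot inequality in (\ref{pivotTA}) reads $\|b' - b\| \ge \|b - b\| = 0$, which holds (strictly, since $b' \ne b$), and the strict-pivot inequality (\ref{pivotsTA}) reads $(b-b')^T(b - b) = 0 \ge 0$. Hence a strict pivot exists at every $b'$, so in particular no witness can exist. Conversely, if some $b'$ is a witness, then by Definition \ref{defpTA} no $v \in C$ satisfies $\|b'-v\| \ge \|b-v\|$; were $b$ in $C$, taking $v=b$ would give $\|b'-b\| \ge 0$, a pivot, a contradiction. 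Thus a witness forces $b \notin C$.

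The substantive step is to show that $b \notin C$ produces a witness. Here I would let $b^{*}$ be the Euclidean projection of $b$ onto $C$, which exists because $C$ is compact (or closed and convex) and is characterized by the variational inequality $(b - b^{*})^T(v - b^{*}) \le 0$ for all $v \in C$. The key computation is the identity
\[
\|b - v\|^2 - \|b^{*} - v\|^2 = \|b - b^{*}\|^2 - 2\,(b-b^{*})^T(v - b^{*}),
\]
obtained by expanding $\|b-v\|^2$ about $b^{*}$. The variational inequality makes the last term nonnegative, so $\|b-v\|^2 - \|b^{*}-v\|^2 \ge \|b - b^{*}\|^2 > 0$ for every $v \in C$ (strict because $b \ne b^{*}$). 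Hence $\|b^{*} - v\| < \|b - v\|$ for all $v \in C$, meaning no pivot exists at $b^{*}$: the iterate $b^{*}$ is a witness.

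Finally I would assemble the equivalences. The implications above show $b \in C \Rightarrow$ strict pivots exist at every $b'$ (and no witness), while $b \notin C \Rightarrow$ a witness exists, hence some $b'$ admits no pivot and a fortiori no strict pivot. Since these two cases partition all possibilities, both biconditionals follow at once: $b \in E_{A,\rho}$ iff every $b' \in E_{A,\rho}\setminus\{b\}$ has a strict pivot, and equivalently $b \notin E_{A,\rho}$ iff a witness exists. I expect the main obstacle to be nothing more than correctly invoking the projection theorem and its variational inequality; the rest is bookkeeping with inner products, and the convenient choice $v = b$ trivializes the membership direction.
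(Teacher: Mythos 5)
Your proof is correct. Note that the paper itself gives no proof of Theorem \ref{thm1TA}: it is stated with a \qed and imported from \cite{kalantari23}, where distance duality is established for the general convex hull membership problem. So there is no in-paper argument to compare against; what you have written is a legitimate self-contained proof of the specialized statement. Two features of your argument are worth highlighting. First, the membership direction is trivialized precisely because in this setting the pivot may be any point of the convex set $E_{A,\rho}$ itself; in the general CHMP version of distance duality the pivot must be drawn from the generating set $S$ (e.g.\ a finite point set), and there the choice $v=b$ is unavailable, which is what makes the general theorem nontrivial. Your observation that $v=b$ satisfies both (\ref{pivotTA}) and (\ref{pivotsTA}) is exactly right, and you correctly verify both inequalities rather than relying on the (true, but unstated) implication that a strict pivot is a pivot. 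Second, your witness construction via the Euclidean projection $b^{*}$ of $b$ onto the compact convex set $E_{A,\rho}$, using the variational inequality $(b-b^{*})^T(v-b^{*})\le 0$ and the expansion
\begin{equation*}
\Vert b - v\Vert^2 - \Vert b^{*}-v \Vert^2 = \Vert b - b^{*}\Vert^2 - 2(b-b^{*})^T(v-b^{*}) \ge \Vert b-b^{*}\Vert^2 > 0,
\end{equation*}
is the standard route to producing a witness and is exactly how the non-membership direction is handled in the underlying theory; it shows the projection is in fact a witness certifying $b\notin E_{A,\rho}$, and your case split then yields both biconditionals. The only point deserving care, which you handled implicitly, is that $E_{A,\rho}$ is indeed compact and convex (continuous linear image of a compact convex ball), so the projection and its variational characterization are available.
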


From the Distance Duality theorem we get the following useful lower bound on the norm of the minimum-norm solution, $x_*$, based on a computation of a $b$-witness. Later we will see it's algorithmic utility.

\begin{cor} \label{cor1} Consider the linear system $Ax=b$. Let $x_*$  be its minimum-norm least-squares solution. If for a given $\rho >0$, $b' \in E_{A,\rho}$ is a $b$-witness, then
\begin{equation}
\frac{(b-b')^Tb}{ \Vert A^T(b-b') \Vert} < \Vert x_* \Vert.
\end{equation}
\end{cor}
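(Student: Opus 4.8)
The plan is to treat the quantity $(b-b')^T b / \Vert A^T(b-b')\Vert$ as an inner product that the Cauchy--Schwarz inequality bounds by $\Vert x_*\Vert$, reintroducing $x_*$ into the numerator through the normal equations, while invoking the Distance Duality theorem (Theorem \ref{thm1TA}) only to guarantee that the configuration is non-degenerate, in particular that the denominator is nonzero so the ratio is well defined. The governing algebraic fact is that the minimum-norm least-squares solution satisfies $A^TAx_* = A^Tb$, equivalently $A^T(b - b_*) = 0$ where $b_* = Ax_*$ is the orthogonal projection of $b$ onto the range of $A$; consequently $A^T(b-b') = A^T(b_* - b')$, and $x_*$ lies in the row space $\mathrm{range}(A^T)$.

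First I would record the effect of the witness hypothesis. By Definition \ref{defpTA}, $b'$ being a $b$-witness means no pivot exists, i.e. $(b-b')^T v < \tfrac12(\Vert b\Vert^2 - \Vert b'\Vert^2)$ for every $v \in E_{A,\rho}$; maximizing the left-hand side over the ellipsoid gives $\rho\,\Vert A^T(b-b')\Vert < \tfrac12(\Vert b\Vert^2 - \Vert b'\Vert^2)$. By Theorem \ref{thm1TA} this certifies $b \notin E_{A,\rho}$, which is exactly what rules out $b' = b$ and, together with the separation, lets one conclude $A^T(b-b') \ne 0$ so the stated ratio makes sense. Writing the numerator through the normal equations, $(b-b')^T b_* = (b-b')^T A x_* = \bigl(A^T(b-b')\bigr)^T x_*$, the Cauchy--Schwarz inequality yields $(b-b')^T b_* \le \Vert A^T(b-b')\Vert\,\Vert x_*\Vert$. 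When $Ax=b$ is solvable one has $b_* = b$, so this reads $(b-b')^T b \le \Vert A^T(b-b')\Vert\,\Vert x_*\Vert$, which is the desired bound after dividing by the nonzero denominator.

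The hard part, and where I expect the argument to need the most care, is twofold. Converting the Cauchy--Schwarz $\le$ into the \emph{strict} inequality of the statement requires excluding the equality case $A^T(b-b') = t\,x_*$ with $t>0$; I would attempt this by showing that positive alignment, combined with the separation margin $\rho\,\Vert A^T(b-b')\Vert < \tfrac12(\Vert b\Vert^2-\Vert b'\Vert^2)$, is inconsistent for a genuine witness, and failing that, to read the estimate as holding strictly off a measure-zero alignment. The second difficulty is the inconsistent case: there $b \ne b_*$, and the clean identity becomes $(b-b')^T b = \bigl(A^T(b-b')\bigr)^T x_* + \Vert b - b_*\Vert^2$, so the extra nonnegative term $\Vert b-b_*\Vert^2$ must be absorbed. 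Controlling this term is where the specific structure of the witness is essential; I would expect the corollary to be applied in the solvable regime, where $b_* = b$ and this obstruction disappears, leaving the pure Cauchy--Schwarz estimate together with the well-definedness supplied by Distance Duality.
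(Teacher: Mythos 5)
First, a point of reference: the paper itself contains no proof of Corollary \ref{cor1} --- it is stated as a consequence of Distance Duality and imported from \cite{kalantari23} --- so your proposal has to be judged on its own merits rather than against a written argument. Your core mechanism is the right (and essentially the only) one: in the solvable case, $(b-b')^Tb=(b-b')^TAx_*=\bigl(A^T(b-b')\bigr)^Tx_*\le\Vert A^T(b-b')\Vert\,\Vert x_*\Vert$ by Cauchy--Schwarz, with the witness hypothesis entering only for non-degeneracy and to make the bound exceed $\rho$; this matches how the paper actually uses the corollary in Algorithms \ref{2.2} and \ref{2.5}. One local error: your claim that Distance Duality plus the separation inequality forces $c=A^T(b-b')\neq 0$ is false in general. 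By Theorem \ref{prop0TA}(1), $c=0$ with $b'\neq b$ makes $b'$ a witness (and the system unsolvable); the separation then reads $0<\tfrac12(\Vert b\Vert^2-\Vert b'\Vert^2)$, which is perfectly satisfiable. What saves well-definedness is solvability itself: $\Vert b-b'\Vert^2=(b-b')^T(Ax_*-Ax')=c^T(x_*-x')$, so $c=0$ would force $b'=b$, which is never a witness.

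The two difficulties you flag at the end are not technical gaps that more care could close; they are actual failures of the statement as written, so your retreat to ``solvable case, non-strict inequality'' is forced and is the strongest provable conclusion. For strictness: a genuine witness \emph{can} achieve equality in Cauchy--Schwarz, so your plan to show positive alignment is inconsistent with being a witness cannot succeed. Take $A=I_2$, $b=(1,0)^T$, $\rho=1/2$, $b'=(1/2,0)^T$: every $v\in E_{A,1/2}$ satisfies $(b-b')^Tv\le 1/4<3/8=\tfrac12(\Vert b\Vert^2-\Vert b'\Vert^2)$, so $b'$ is a witness, yet $c=(1/2,0)^T$ is positively parallel to $x_*=(1,0)^T$ and $(b-b')^Tb/\Vert c\Vert=1=\Vert x_*\Vert$. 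For the inconsistent case: the extra term $\Vert b-b_*\Vert^2$ in your identity cannot be absorbed, because the inequality is simply false there. Take $A=\mathrm{diag}(1,0)$, $b=(1,1)^T$, $\rho=1/2$, $b'=(-1/2,0)^T$: then $b'$ is a witness, since
\begin{equation*}
\max\{(b-b')^Tv: v\in E_{A,\rho}\}=\tfrac34<\tfrac78=\tfrac12\bigl(\Vert b\Vert^2-\Vert b'\Vert^2\bigr),
\end{equation*}
while $x_*=(1,0)^T$ and $(b-b')^Tb/\Vert A^T(b-b')\Vert=(5/2)/(3/2)=5/3>1=\Vert x_*\Vert$. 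So the ``minimum-norm least-squares'' phrasing overreaches: the corollary can only hold for solvable systems and with $\le$ in place of $<$. Your structural instincts (Cauchy--Schwarz via the normal equation, witness only for non-degeneracy, solvable regime as the intended setting) are correct, but the repairs you propose for strictness and for inconsistency would fail against the examples above.
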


To complete the description of TA, we need to compute a pivot
(or strict pivot). This is achieved by computing the optimal solution of the following optimization problem:
\begin{equation} \label{pivottestTA}
v_\rho= {\rm argmax}\{c^Tx : Ax \in E_{A, \rho}\}, \quad c = A^T(b - b').
\end{equation}

Geometrically, to find $v_\rho$, consider the orthogonal hyperplane to line $bb'$, then move it from $b'$ toward $b$ until it is tangential to the boundary of $E_{A, \rho}$. For illustration, see Figure \ref{AAD}. It follows from (\ref{pivotTA}) that either $v_\rho$ is a pivot, or $b'$ is a witness. In fact, if $b \in E_{A, \rho}$,  $v_\rho$ is a strict pivot.

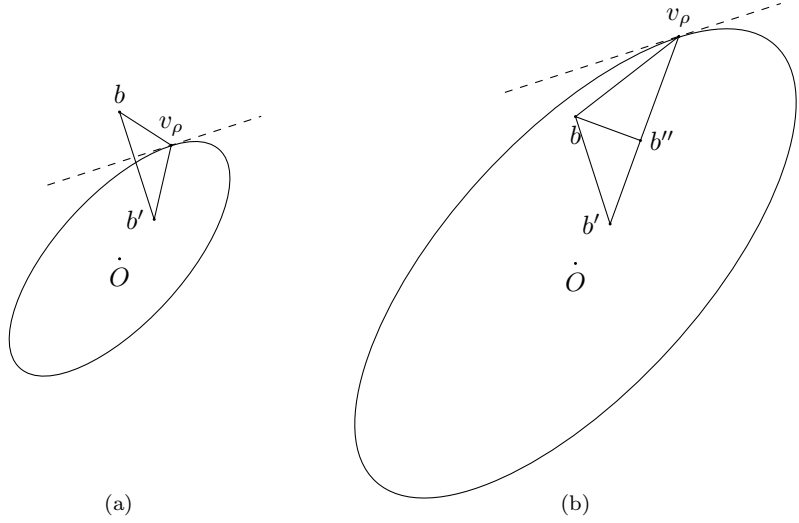
\begin{figure}[htpb]
	\centering
	\subfloat[] {
	\raisebox{10ex}{
	\begin{tikzpicture}[scale=.7]
	\begin{scope}
	\begin{scope}
	[rotate=47.5829]
	\draw \boundellipse{(0,0}{2.810}{1.210};
	\filldraw (0,0) circle (.5pt) node[below] {$O$};
	\end{scope}
	\filldraw (0,2.79) circle (.5pt)  node[above] {$b$};
	\filldraw (0.9815,2.158)  circle (.5pt)  node[above] {$v_\rho$};
	\filldraw (0.6580,0.75)  circle (.5pt)  node[left] {$b'$};
	\draw[dashed] (-1.37,1.4)--(2.69,2.709);
	\draw (0.6580,0.75)--(0.9815,2.158)--(0,2.79)--(0.6580,0.75);
	\end{scope}
	\label{1st}
	\end{tikzpicture}}}
	\qquad
	\subfloat[] {
	\begin{tikzpicture}[scale=.7]
	\begin{scope}
	\begin{scope}
	[rotate=47.5829]
	\draw  \boundellipse{(0,0}{5.620}{2.420};
	\filldraw (0,0) circle (.5pt) node[below] {$O$};
	\end{scope}
	\filldraw (0,2.79) circle (.5pt)  node[below] {$b$};
	\filldraw (1.963,4.315)  circle (.5pt)  node[above] {$v_\rho$};
	\filldraw (1.234,2.336)  circle (.5pt)  node[right] {$b''$};
	\filldraw (0.6580,0.75)  circle (.5pt)  node[left] {$b'$};
	\draw[dashed] (-1.34,3.25)--(3.9,4.94);
	\draw (1.963,4.315)--(1.234,2.336)--(0.6580,0.75)--(0,2.79)--(1.963,4.315);
	\draw (0,2.79)--(1.234,2.336);
	\end{scope}
	
	\end{tikzpicture}
	\label{2nd}
	}
	
	\caption{In Figure \ref{1st}, $v_\rho$ is not a strict pivot, proving $b$ is exterior to the ellipsoid.
In Figure \ref{2nd},  $b'$ admits a pivot, $v_r$, used to compute the next iterate $b''$  as the nearest point to $b$ on the line segment $b'v_\rho$. Geometrically, $v_\rho$ is found by moving the orthogonal hyperplane to line $bb'$ (dashed line) in the direction from $b'$ to $b$ until the hyperplane is tangential to the ellipsoid.}
	\label{AAD}
\end{figure}

Testing if $b \in E_{A, \rho}$  is an important subproblem for solving $Ax=b$ or the normal equation.  In what follows we first state Theorem \ref{prop0TA} regarding the solvability of $Ax=b$ in the ellipsoid $E_{A, \rho}$.
Then in Theorem \ref{newTA} we characterize the unsolvability of $Ax=b$ and its  Corollary \ref{cor1} results in a lower bound on the minimum-norm solutions $x_*$ when a witness is at hand.  We will then state Algorithm \ref{2.1} and state the complexity of the Triangle Algorithm for testing if $b \in E_{A, \rho}$ to within $\varepsilon$.

\begin{theorem} \label{prop0TA} Given $x' \in \mathbb{R}^n$, where $\|x'\| \leq \rho$, let $b' = Ax'$. Assume $b' \not =b$. Let
\begin{equation} \label{c}
c= A^T(b-b')=A^TAx'-A^Tb.
\end{equation}

(1) If $c=0$, $x'$ is a solution to normal equation, $Ax=b$ is unsolvable and $b'$ is a witness, proving $b \not \in E_{A, \rho}$.

(2) Suppose $c \not =0$. Then
\begin{equation}  \label{vr}
v_\rho= \rho \frac{Ac}{\Vert c \Vert}= {\rm argmax}\{c^Tx : Ax \in E_{A, \rho}\}, \quad i.e. \quad
\max\{c^Tx: Ax\in  E_{A, \rho} \} = c^T v_\rho= \rho \Vert c \Vert.
\end{equation}

(3) $v_\rho$ is a strict pivot if and only if
\begin{equation}  \label{eqlem2}
\rho \Vert c \Vert \geq (b-b')^Tb.
\end{equation}
\end{theorem}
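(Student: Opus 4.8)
The three parts can be treated largely independently; in each, the key move is to translate the geometric conditions of Definition \ref{defpTA} into algebraic statements about $c = A^T(b-b')$, and then to invoke the Distance Duality theorem (Theorem \ref{thm1TA}) where a witness needs to be certified.

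For part (1), the plan is to observe that $c=0$ is literally the normal equation: $A^T(b-b')=0$ rearranges to $A^TAx'=A^Tb$, so $x'$ solves it. To see that $Ax=b$ is unsolvable, I would note that a solution of the normal equation makes $b-b'\in\ker A^T=(\mathrm{range}\,A)^\perp$, i.e. $b'=Ax'$ is the orthogonal projection of $b$ onto $\mathrm{range}\,A$; since $b'\neq b$ by hypothesis, $b\notin\mathrm{range}\,A$ and the system has no solution. To certify that $b'$ is a witness, I would test the pivot inequality (\ref{pivotTA}) against an arbitrary $v=Ax\in E_{A,\rho}$: the left side is $(b-b')^Tv=(A^T(b-b'))^Tx=c^Tx=0$, whereas the right side equals $\tfrac12(\|b\|^2-\|b'\|^2)=\tfrac12\|b-b'\|^2>0$, using the orthogonality $b-b'\perp b'$. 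Thus no $v$ can satisfy (\ref{pivotTA}), so $b'$ is a witness and Theorem \ref{thm1TA} yields $b\notin E_{A,\rho}$.

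For part (2), the subtlety is that the feasible set in (\ref{pivottestTA}) is $\{x:Ax\in E_{A,\rho}\}$, which equals the Minkowski sum of the ball $\{\|y\|\le\rho\}$ with $\ker A$, rather than the ball itself. The plan is to exploit that $c=A^T(b-b')\in\mathrm{range}(A^T)=(\ker A)^\perp$, so the objective $c^Tx$ is insensitive to kernel directions and the problem collapses to $\max\{c^Ty:\|y\|\le\rho\}$. Cauchy--Schwarz then gives the optimal value $\rho\|c\|$, attained at $y^\ast=\rho c/\|c\|$, whose image is the asserted pivot $v_\rho=\rho Ac/\|c\|$. I expect this kernel bookkeeping to be the main (though still mild) obstacle, since without the observation $c\perp\ker A$ the unboundedness of the feasible set is not obviously harmless.

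For part (3), I would substitute the optimizer of part (2) into the strict-pivot inequality (\ref{pivotsTA}). Writing $v_\rho=Ay^\ast$, we get $(b-b')^Tv_\rho=c^Ty^\ast=\rho\|c\|$, so (\ref{pivotsTA}), i.e. $(b-b')^T(v_\rho-b)\ge0$, becomes exactly $\rho\|c\|\ge(b-b')^Tb$. The one point requiring care is confirming that a $v_\rho$ satisfying (\ref{pivotsTA}) is automatically a genuine pivot in the sense of (\ref{pivotTA}), so that the characterization of ``strict pivot'' is complete. I would verify this by comparing thresholds: $(b-b')^Tb-\tfrac12(\|b\|^2-\|b'\|^2)=\tfrac12\|b-b'\|^2\ge0$, so the strict-pivot threshold dominates the pivot threshold and (\ref{pivotsTA}) implies (\ref{pivotTA}). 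This establishes the stated equivalence and completes the proof.
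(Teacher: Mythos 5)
Your proof is correct, but note that this paper never proves Theorem \ref{prop0TA} at all: like the other TA results in Section \ref{sec4}, it is stated as imported from \cite{kalantari23}, so there is no in-paper argument to compare yours against. Judged on its own merits, your three-part argument is sound and self-contained. In part (1) you correctly read $c=0$ as the normal equation, use $b-b'\in\ker A^T=(\mathrm{range}\,A)^\perp$ together with $b'\neq b$ to rule out solvability, and certify the witness property by showing every $v=Ax\in E_{A,\rho}$ gives $(b-b')^Tv=c^Tx=0$ while the pivot threshold equals $\tfrac12\Vert b-b'\Vert^2>0$; the orthogonality $(b-b')^Tb'=0$ is exactly what makes that threshold computation work. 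Part (2) is where a blind proof could most easily go astray, and you handle it properly: the feasible set $\{x:Ax\in E_{A,\rho}\}$ is the Minkowski sum of the radius-$\rho$ ball with $\ker A$, and the observation $c\in\mathrm{range}(A^T)=(\ker A)^\perp$ is precisely what collapses the problem to $\max\{c^Ty:\Vert y\Vert\le\rho\}=\rho\Vert c\Vert$ via Cauchy--Schwarz; this bookkeeping also resolves the mild abuse of notation in (\ref{vr}), where the maximizer lives in $x$-space and $v_\rho$ is its image under $A$. In part (3) you prove both directions and, importantly, close the loophole in Definition \ref{defpTA}: since a strict pivot must in particular be a pivot, your threshold comparison $(b-b')^Tb-\tfrac12\bigl(\Vert b\Vert^2-\Vert b'\Vert^2\bigr)=\tfrac12\Vert b-b'\Vert^2\ge0$ shows that (\ref{pivotsTA}) implies (\ref{pivotTA}), so the single inequality (\ref{eqlem2}) is genuinely equivalent to $v_\rho$ being a strict pivot. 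One cosmetic remark: equation (\ref{c}) in the statement has a sign slip, since $A^T(b-b')=A^Tb-A^TAx'$ rather than $A^TAx'-A^Tb$; your proof correctly works with $c=A^T(b-b')$ throughout, so this does not affect your argument.
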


\begin{remark}  Algorithmically, an important aspect of the Triangle Algorithm relies on the fact that the optimization of a linear function over an ellipsoid can be computed efficiently, see (\ref{vr}).
\end{remark}

\begin{theorem}  \label{newTA} $Ax=b$ is unsolvable if and only if for each $\rho >0$ there exists a witness $b' \in E_{A,\rho}$ such that
\begin{equation} \label{infeasibilityTA}
\rho \Vert c \Vert < (b-b')^Tb, \quad c=A^T(b-b').
\end{equation}
\end{theorem}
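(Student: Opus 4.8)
The plan is to derive everything from the Distance Duality theorem (Theorem \ref{thm1TA}) together with the explicit computation of the optimal pivot value in Theorem \ref{prop0TA}, reducing the claim to a single elementary algebraic identity that relates the two forms of the pivot inequality.

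First I would dispatch the easy (``if'') direction. Suppose that for every $\rho > 0$ there is a witness $b' \in E_{A,\rho}$; note that the strict inequality in (\ref{infeasibilityTA}) is not even needed here. By Distance Duality, the existence of a witness in $E_{A,\rho}$ is equivalent to $b \notin E_{A,\rho}$. Since this holds for all $\rho$, the point $b$ lies in no ellipsoid $E_{A,\rho} = \{Ax : \Vert x \Vert \le \rho\}$, hence $b \notin \{Ax : x \in \mathbb{R}^n\}$. Were $Ax=b$ solvable by some $\widehat x$, then choosing $\rho = \Vert \widehat x\Vert$ would give $b = A\widehat x \in E_{A,\rho}$, a contradiction; thus $Ax=b$ is unsolvable.

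For the (``only if'') direction, assume $Ax=b$ is unsolvable. Then $b \notin E_{A,\rho}$ for every $\rho$ (otherwise $b=Ax$ with $\Vert x\Vert \le \rho$ would solve the system), so by Distance Duality each $E_{A,\rho}$ contains a witness $b'$, and $b' \neq b$ since $b \notin E_{A,\rho}$. It remains to verify (\ref{infeasibilityTA}) for this witness. By Definition \ref{defpTA}, $b'$ being a witness means no $v \in E_{A,\rho}$ satisfies the pivot inequality (\ref{pivotTA}); equivalently,
\[
\max\{(b-b')^Tv : v \in E_{A,\rho}\} < \tfrac{1}{2}\big(\Vert b \Vert^2 - \Vert b' \Vert^2\big).
\]
Writing $c = A^T(b-b')$ and $v = Ax$, the left-hand maximum equals $\max\{c^Tx : Ax \in E_{A,\rho}\} = \rho \Vert c \Vert$ by Theorem \ref{prop0TA}(2) (this formula also covers the degenerate case $c=0$, where both sides vanish and $b'$ is automatically a witness by Theorem \ref{prop0TA}(1)). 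Hence the witness condition reads $\rho\Vert c\Vert < \tfrac12(\Vert b\Vert^2 - \Vert b'\Vert^2)$.

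The one remaining step, and the only place any computation is required, is the identity
\[
(b-b')^Tb - \tfrac12\big(\Vert b\Vert^2 - \Vert b'\Vert^2\big) = \tfrac12 \Vert b - b'\Vert^2 \ge 0,
\]
which shows $\tfrac12(\Vert b\Vert^2-\Vert b'\Vert^2) \le (b-b')^Tb$. Chaining this with the witness inequality yields $\rho\Vert c\Vert < (b-b')^Tb$, which is exactly (\ref{infeasibilityTA}). I expect no genuine obstacle here: the subtlety is purely bookkeeping, namely recognizing that the witness condition is naturally phrased with the ``midpoint'' threshold $\tfrac12(\Vert b\Vert^2-\Vert b'\Vert^2)$ coming from (\ref{pivotTA}), whereas (\ref{infeasibilityTA}) uses the larger strict-pivot threshold $(b-b')^Tb$ from Theorem \ref{prop0TA}(3); the displayed identity bridges the two and simultaneously preserves strictness (the gap $\tfrac12\Vert b-b'\Vert^2$ is nonnegative, and in fact positive since $b'\neq b$, although nonnegativity already suffices because the witness inequality is strict).
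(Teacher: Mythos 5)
Your proof is correct. Note that the paper itself states Theorem \ref{newTA} without proof (it is quoted from \cite{kalantari23}), so there is no in-paper argument to compare against; your derivation uses exactly the toolkit the paper provides, namely Distance Duality (Theorem \ref{thm1TA}) for both directions and the max formula of Theorem \ref{prop0TA}(2), bridged by the identity $(b-b')^Tb - \tfrac12\big(\Vert b\Vert^2 - \Vert b'\Vert^2\big) = \tfrac12\Vert b - b'\Vert^2$, which correctly converts the witness condition (no pivot, threshold from (\ref{pivotTA})) into the strict-pivot-failure inequality (\ref{infeasibilityTA}), and this is the natural and presumably intended route.
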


\subsection{TA for Testing Solvability of a Linear System over Fixed Ellipsoid}
We may now give the description of the first version of the  Triangle Algorithm, Algorithm \ref{2.1}.

\begin{algorithm}[!htb]
\SetAlgoNoLine
\KwIn{$A \in \mathbb{R}^{m \times n}$, $b \in \mathbb{R}^m$, $b \not =0$, $\rho>0$, $\varepsilon, \varepsilon' \in (0, 1)$.}
$x' \gets 0$, $b' \gets 0$

\While{$(\|b - b'\| > \varepsilon)$ $\wedge$ $(\Vert A^T(b-b') \Vert > \varepsilon')$}{$c=A^T(b-b')$,
$v_\rho=\rho
A\frac{c}{\|c\|}$.

  \lIf{$ \rho \Vert c \Vert \geq (b-b')^Tb$} { $\alpha \gets (b - b')^T(v_\rho - b')/\|v_\rho - b'\|^2$,

  $b' \gets (1-\alpha) b' + \alpha v_\rho$, \quad $x' \gets (1-\alpha) x' +  \alpha \frac{\rho c}{\Vert c \Vert}$}
   \Else{{$b'=Ax'$ is a witness, STOP}}}
    \caption{(TA) Computes an $\varepsilon$-approximate solution of $Ax = b$ or an $\varepsilon'$-approximate solution of $A^TAx = A^Tb$, or a witness $b'$ in $E_{A, \rho}$ (if no witness, final $x'$ is desired approximate solution) .} \label{2.1}
\end{algorithm}

From Theorem \ref{prop0TA}, Corollary \ref{cor1} and the structure of Algorithm \ref{2.1} we have:

\begin{prop} \label{propxTA} Algorithm \ref{2.1} either terminate with
$x'$ such that
$\Vert Ax' - b \Vert \leq \varepsilon$, or $ \Vert A^TAx'-A^Tb \Vert \leq \varepsilon'$, or $b'=Ax'$ a witness, proving that $b \not \in E_{A, \rho}$.
Furthermore, if $b'$ is a witness then $\frac{(b-b')^Tb}{ \Vert A^T(b-b') \Vert} < \Vert x_* \Vert$. \qed
\end{prop}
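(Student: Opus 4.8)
The plan is to read this proposition as a correctness check on the exit behavior of Algorithm \ref{2.1}, assembling it entirely from structural facts already established: Theorem \ref{prop0TA} (the explicit maximizer $v_\rho$ and the strict-pivot test), the Distance Duality theorem (Theorem \ref{thm1TA}), and Corollary \ref{cor1}. Once two loop invariants are in place, the argument reduces to a case analysis on how the algorithm halts, so I would first isolate the invariants and then dispatch each exit route.

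First I would prove, by induction on the iteration count, that at the top of every pass through the while loop we have (a) $b' = Ax'$ and (b) $\|x'\| \le \rho$, so that $b' \in E_{A,\rho}$. The base case is immediate since $x'=0$, $b'=0=A\cdot 0$, and $\|0\|\le\rho$. For the inductive step, observe from (\ref{vr}) that $v_\rho = \rho A c/\|c\| = A(\rho c/\|c\|)$; applying $A$ to the update $x' \gets (1-\alpha)x' + \alpha\,\rho c/\|c\|$ therefore reproduces exactly $b' \gets (1-\alpha)b' + \alpha v_\rho$, preserving (a). Invariant (b) is preserved because the new $x'$ is a convex combination of $x'$ (of norm $\le\rho$) and $\rho c/\|c\|$ (of norm $\rho$); here one checks $\alpha\in[0,1]$ from the strict-pivot property of $v_\rho$ on the branch that reaches the update (this is the routine computation behind Proposition \ref{prop1TA} and the fact that $b''$ is the nearest point to $b$ on the segment $b'v_\rho$). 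I would also record that whenever the update block is entered, $c \ne 0$ and $b' \ne b$: otherwise $\|A^T(b-b')\|=\|c\|=0\le\varepsilon'$ or $\|b-b'\|=0\le\varepsilon$, and the while guard would already have failed, so $v_\rho$ is well defined and $b'\ne b$.

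With the invariants in hand I would split on the exit route. If the while loop terminates, its guard — the conjunction of $\|b-b'\|>\varepsilon$ and $\|A^T(b-b')\|>\varepsilon'$ — must fail, so at least one of $\|b-b'\|\le\varepsilon$ or $\|A^T(b-b')\|\le\varepsilon'$ holds. By invariant (a) and $A^T(b-b') = -(A^TAx'-A^Tb)$ these read $\|Ax'-b\|\le\varepsilon$ and $\|A^TAx'-A^Tb\|\le\varepsilon'$, giving the first two outcomes. The only other exit is the Else branch, reached precisely when the test $\rho\|c\| \ge (b-b')^Tb$ fails, i.e.\ $\rho\|c\| < (b-b')^Tb$. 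By Theorem \ref{prop0TA}(3) this says exactly that $v_\rho$ is not a strict pivot, and by Theorem \ref{prop0TA}(2) $v_\rho$ maximizes $(b-b')^Tv$ over $v\in E_{A,\rho}$; hence no $v\in E_{A,\rho}$ can satisfy the strict-pivot inequality (\ref{pivotsTA}), so no strict pivot exists at $b'$. Since $b'\in E_{A,\rho}\setminus\{b\}$, the Distance Duality theorem (Theorem \ref{thm1TA}) then certifies that $b'$ is a witness and $b\notin E_{A,\rho}$, the third outcome.

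Finally, the ``furthermore'' clause is immediate: $b'$ is a witness lying in $E_{A,\rho}$ by invariant (b), so Corollary \ref{cor1} applies verbatim and yields $(b-b')^Tb/\|A^T(b-b')\| < \|x_*\|$. I expect the only genuinely delicate point to be the strict-versus-nonstrict pivot bookkeeping in the witness case — namely, leveraging the optimality of the single maximizer $v_\rho$ from Theorem \ref{prop0TA}(2) to conclude that failure of the strict-pivot test rules out strict pivots everywhere — together with confirming $c\ne 0$ and $b'\ne b$ at the point of use; everything else is substitution through the invariant $b'=Ax'$.
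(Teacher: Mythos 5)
Your proposal is correct and follows essentially the same route as the paper, which states this proposition with the proof left implicit (``From Theorem \ref{prop0TA}, Corollary \ref{cor1} and the structure of Algorithm \ref{2.1} we have:''): the loop invariants $b'=Ax'$, $\Vert x'\Vert \leq \rho$, the case analysis on the exit (guard failure versus the Else branch), Theorem \ref{prop0TA}(2)--(3) with Distance Duality for the witness case, and Corollary \ref{cor1} for the lower bound on $\Vert x_*\Vert$ are exactly the ingredients the paper intends. Your explicit handling of the delicate point --- that failure of the test $\rho\Vert c\Vert \geq (b-b')^Tb$ at the maximizer $v_\rho$ rules out strict pivots everywhere in $E_{A,\rho}$, hence $b \notin E_{A,\rho}$ --- supplies detail the paper glosses over in calling $b'$ a witness, so your write-up is, if anything, more complete than the original.
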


\begin{remark}  When $Ax=b$ admits an $\varepsilon$-approximate solution in $E_{A,b}$ the algorithm may terminate sooner with an $\varepsilon'$-approximate solution of the normal equation.  By adjusting the precision, Algorithm \ref{2.2}, to be described later, will be able to identify an $\varepsilon$-approximate solution of $Ax=b$.
\end{remark}

The iteration complexity of Algorithm \ref{2.1}  is based on  the iteration complexity of the general Triangle Algorithm for solving CHMP.

\begin{theorem} \label{thm2pTA} {\rm {(Iteration Complexity Bounds)}}
Algorithm \ref{2.1} terminates with  $b' \in E_{A,\rho}$ satisfying one of the  following conditions:

(i) $\|b - b'\| \leq \varepsilon$ and the number of iterations to compute $b'$ is $O\big (\rho^2 \Vert A \Vert^2/\varepsilon^2 \big )$.

(ii) $b'$ is witness and the number of iterations to compute it is $O(\rho^2 \Vert A \Vert^2/\delta_\rho^2)$, where $\delta_\rho = \min \{\|Ax - b\| :  \Vert x \Vert \leq \rho\}$. Moreover,
\begin{equation} \label{deltaboundTA}
\delta_\rho \leq \|b - b'\| \leq 2\delta_\rho.
\end{equation}

(iii) Suppose for some $\rho_b >0$ the intersection of ball of radius $\rho_b$ centered at $b$ and $E_{A,\rho}$ is a ball of the same radius in the interior of
$E_{A,\rho}$.
Then the number of iterations is $O\big ((\rho \Vert A \Vert/\rho_b)^2 \ln{1 / \varepsilon} \big )$.
\end{theorem}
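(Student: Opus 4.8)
The plan is to reduce all three parts to a single distance-decrease recurrence for one strict-pivot step, together with a crude diameter bound on $E_{A,\rho}$; this is the self-contained geometric route, in place of quoting the general CHMP complexity directly. First I would record the geometry of one update. By Theorem \ref{prop0TA}(3), whenever Algorithm \ref{2.1} enters its if-branch the computed $v_\rho$ is a \emph{strict} pivot, so I may use the strict-pivot inequality (\ref{pivotsTA}) throughout. Writing $d' = \Vert b - b'\Vert$ and $d'' = \Vert b - b''\Vert$, and using that $b''$ is by Proposition \ref{prop1TA} the nearest point to $b$ on the segment $b'v_\rho$, a direct expansion of $b-b''=(b-b')-\alpha(v_\rho-b')$ with $\alpha$ as in (\ref{alph}) gives the exact identity
\[
d''^2 = d'^2 - \frac{\big((b-b')^T(v_\rho - b')\big)^2}{\Vert v_\rho - b'\Vert^2}.
\]
Two estimates feed into this. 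Since $\Vert x\Vert \le \rho$ forces $\Vert Ax\Vert \le \rho\Vert A\Vert$, the ellipsoid sits inside a ball of radius $\rho\Vert A\Vert$, so $\Vert v_\rho - b'\Vert \le 2\rho\Vert A\Vert =: D$. And the strict-pivot condition $(b-b')^T(v_\rho - b) \ge 0$ rearranges to $(b-b')^T(v_\rho - b') \ge \Vert b - b'\Vert^2 = d'^2$. Substituting both yields the master recurrence $d''^2 \le d'^2\big(1 - d'^2/D^2\big)$.

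For part (i) I would iterate this while $d_k > \varepsilon$. Setting $u_k = d_k^2$, the relation $u_{k+1} \le u_k - u_k^2/D^2$ inverts via $1/(u_k - u_k^2/D^2) \ge 1/u_k + 1/D^2$ into the telescoping bound $1/u_k \ge k/D^2$, so $d_k \le D/\sqrt{k}$. Demanding $D/\sqrt{k}\le\varepsilon$ gives $k = O(D^2/\varepsilon^2) = O(\rho^2\Vert A\Vert^2/\varepsilon^2)$, which is (i).

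For part (ii) the two-sided witness bound (\ref{deltaboundTA}) is pure geometry. The lower bound $\delta_\rho \le \Vert b - b'\Vert$ is immediate because $b' \in E_{A,\rho}$ and $\delta_\rho$ is the distance from $b$ to that set. For the upper bound, let $b_*$ realize $\delta_\rho$; since $b'$ is a witness it admits no pivot, so in particular $\Vert b' - b_*\Vert < \Vert b - b_*\Vert = \delta_\rho$, whence the triangle inequality gives $\Vert b - b'\Vert \le \Vert b - b_*\Vert + \Vert b_* - b'\Vert < 2\delta_\rho$. The iteration count is then the same $1/k$ count as in (i), run only until the distance reaches the $\delta_\rho$ scale: since $d_k$ can never drop below $\delta_\rho$, the master recurrence forces termination within $O(D^2/\delta_\rho^2) = O(\rho^2\Vert A\Vert^2/\delta_\rho^2)$ steps.

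Part (iii) is the only place where the recurrence must be sharpened, and I expect it to be the main obstacle. The hypothesis that $B(b,\rho_b)$ lies in the interior of $E_{A,\rho}$ supplies the feasible test point $w = b + \rho_b (b-b')/\Vert b-b'\Vert \in E_{A,\rho}$, for which $(b-b')^T(w-b') = d'^2 + \rho_b d'$. Because $v_\rho$ maximizes the linear functional $u \mapsto (b-b')^Tu$ over $u\in E_{A,\rho}$ (see (\ref{vr})), and $w$ is feasible, $(b-b')^T(v_\rho - b') \ge (b-b')^T(w - b') = d'^2 + \rho_b d'$. Feeding this improved lower bound into the exact identity replaces the master recurrence by the strictly geometric one
\[
d''^2 \le d'^2\Big(1 - \frac{(d'+\rho_b)^2}{D^2}\Big) \le d'^2\Big(1 - \frac{\rho_b^2}{D^2}\Big),
\]
so $d_k^2 \le d_0^2(1-\rho_b^2/D^2)^k$ and $d_k \le \varepsilon$ after $k = O\big((D/\rho_b)^2\ln(1/\varepsilon)\big) = O\big((\rho\Vert A\Vert/\rho_b)^2 \ln(1/\varepsilon)\big)$ iterations. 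The delicate points are verifying that $w$ genuinely lies in $E_{A,\rho}$ (which uses that the interior ball has the full radius $\rho_b$) and that the optimality characterization of $v_\rho$ legitimately dominates the value at $w$; once these are secured the geometric decay and the logarithmic count are routine.
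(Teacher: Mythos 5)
Your proof is correct, but it takes a genuinely different route from the paper: the paper offers no self-contained proof of Theorem \ref{thm2pTA}, asserting instead that the bounds are inherited from the iteration complexity of the general Triangle Algorithm for CHMP in \cite{kalantari23}, with $E_{A,\rho}$ playing the role of the compact convex set. Your argument reconstructs that machinery directly in the ellipsoid setting: the exact projection identity plus the strict-pivot inequality give the decrease lemma $d''^2 \le d'^2\big(1 - d'^2/D^2\big)$ with $D = 2\rho\Vert A \Vert$; the reciprocal telescoping $1/u_{k+1} \ge 1/u_k + 1/D^2$ gives (i); the same count, capped by the a priori bound $d_k \ge \delta_\rho$, gives the iteration bound in (ii), with the two-sided estimate (\ref{deltaboundTA}) coming from the no-pivot property of a witness plus the triangle inequality; and in (iii) the interior ball supplies the test point $w = b + \rho_b (b-b')/\Vert b-b'\Vert$ whose objective value, dominated by that of $v_\rho$ through the closed form (\ref{vr}), upgrades the decrease to a geometric one. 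This is essentially how the cited general bounds are themselves proved, so your route buys explicit constants and a verification that the CHMP results really do specialize correctly through the explicit ellipsoid pivot, at the cost of length; the paper's citation buys brevity and inheritance of the full CHMP theory.

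Two points you leave implicit are worth securing, though neither is a genuine gap. First, your diameter bound $\Vert v_\rho - b'\Vert \le D$ needs the iterates to remain in $E_{A,\rho}$, i.e.\ $\alpha \le 1$ in (\ref{alph}); this follows from the strict-pivot inequality itself, since $(b - v_\rho)^T(v_\rho - b') = d'^2 - (b-b')^T(v_\rho - b') - \Vert b - v_\rho \Vert^2 \le -\Vert b - v_\rho \Vert^2 \le 0$, which is equivalent to $\alpha \le 1$, and it also keeps $\Vert x' \Vert \le \rho$ as the paper asserts. (The same strict-pivot inequality, via Cauchy--Schwarz, forces $d' \le D$, so the quantity $1 - d'^2/D^2$ in your master recurrence is automatically nonnegative and the telescoping step is legitimate.) Second, in (iii) your $w$-argument additionally shows $(b-b')^T v_\rho \ge (b-b')^T b + \rho_b d' > (b-b')^T b$, i.e.\ the if-branch of Algorithm \ref{2.1} is taken at every iteration, so the geometric recurrence applies at every step until $\Vert b - b'\Vert \le \varepsilon$; strictly speaking the resulting count is $O\big((D/\rho_b)^2 \ln (d_0/\varepsilon)\big)$ with $d_0 = \Vert b \Vert \le D$, the same convention implicit in the theorem's statement.
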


\begin{remark} Part (ii) implies when $b \not \in E_{A, \rho}$ the complexity of finding a witness is dependent on the distance from $b$ to the ellipsoid $E_{A, \rho}$, the farther the distance, the fewer iterations it takes to detect unsolvability. Part (iii) suggests when $b$ is well situated in $E_{A, \rho}$ the number of iterations is essentially proportional to the square of the ratio $\rho/\rho_b$. This ratio is analogous to a {\it condition number} associated with the particular problem. In contrast the number of iterations depends logarithmically on $1/\varepsilon$.
\end{remark}

\subsection{TA for Solving a Linear System or Normal Equation}
Next, in Algorithm \ref{2.2} we modify Algorithm \ref{2.1} so that it either computes an $\varepsilon$-approximate solution of $Ax=b$ or an $\varepsilon'$-approximate solution of $A^TAx=A^Tb$. For this reason we need to search in an ellipsoid $E_{A, \rho}$ so that $ \rho$ is at least $\Vert x_* \Vert$, the minimum-norm solution of the normal equation.
Next, each time a given $\rho$ results in a witness $b'$, using Theorem \ref{prop0TA} part (3), we increase $\rho$ sufficiently so that there is a strict pivot for that witness. However, we also want to make sure that $\rho$ increases sufficiently to eventually catch up to $\Vert x_* \Vert$. Thus we take the new $\rho$ to be the maximum of $2 \rho$ and $(b-b')^Tb/\Vert c \Vert$.

\begin{algorithm}[!htb]
\SetAlgoNoLine
\KwIn{$A \in \mathbb{R}^{m \times n}$, $b \in \mathbb{R}^m$, $b \not =0$, $\varepsilon, \varepsilon' \in (0,1)$ }
$\rho \gets 0$, $x' \gets 0$, $b' \gets 0$.

\While{$(\|b - b'\| > \varepsilon)$ $\wedge$  $( \Vert A^T(b-b') \Vert > \varepsilon')$}{$c=A^T(b-b')$,
$v_\rho \gets \rho
A\frac{c}{\|c\|}$.

  \lIf{$ \rho \Vert c \Vert \geq (b-b')^Tb$} { $\alpha \gets (b - b')^T(v_\rho - b')/\|v_\rho - b'\|^2$,

   $b' \gets (1-\alpha) b' + \alpha v_\rho$, \quad $x' \gets (1-\alpha) x' +  \alpha \frac{\rho c}{\Vert c \Vert}$}
   \Else{{$\rho \gets \max \{2 \rho,  \frac{(b-b')^Tb }{\Vert c \Vert}\}$}}}
    \caption{(TA) Computes an $\varepsilon$-approximate solution of $Ax=b$ or $\varepsilon'$-approximate solution of $A^TAx=A^Tb$ (final $x'$ is  approximate solution).} \label{2.2}
\end{algorithm}

\begin{theorem}  \label{finalthm} Algorithm \ref{2.2} satisfies the following properties:

(i)  The value of $\rho$ in the while loop eventually will satisfy
$\rho  \geq \Vert x_* \Vert$. Then  Algorithm \ref{2.2} either computes an $\varepsilon$-approximate solution of $Ax=b$ satisfying $\Vert x_\varepsilon \Vert \leq \rho$, or an $\varepsilon'$-approximate solution  of $A^TAx=A^Tb$ satisfying $\Vert x_{\varepsilon'} \Vert \leq \rho$.

(ii)  Let  $\rho_{\varepsilon'} =  {\Vert b \Vert^2}/{\varepsilon'}$.  Suppose the value of $\rho$ in the while loop satisfies $\rho \geq 2\rho_{\varepsilon'}$. Then, if Algorithm \ref{2.2} does not compute an $\varepsilon$-approximate solution $x_\varepsilon$ of $Ax=b$  satisfying $\Vert x_\varepsilon  \Vert \leq \rho$, it computes an $\varepsilon'$-approximate solution $x_{\varepsilon'}$ of $A^TAx=A^Tb$  satisfying $\Vert x_{\varepsilon'} \Vert \leq \rho$.
\end{theorem}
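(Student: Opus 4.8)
The plan is to prove both parts by a single mechanism: controlling when the algorithm can enter the \emph{else} branch — the only place $\rho$ changes — and showing that once $\rho$ is large enough the strict–pivot test $\rho\Vert c\Vert \ge (b-b')^Tb$ must hold throughout the loop, so that $\rho$ freezes and Theorem \ref{thm2pTA} forces termination. It is cleanest to establish the quantitative statement (ii) first and then bootstrap (i) from it.

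First I would record two elementary monotonicity facts. Since the algorithm starts at $b'=0$ and each \emph{if}-branch replaces $b'$ by the nearest point to $b$ on the segment $[b',v_\rho]$ (Proposition \ref{prop1TA}), the quantity $\Vert b-b'\Vert$ is non-increasing, so $\Vert b-b'\Vert \le \Vert b\Vert$ at every iterate; expanding $\Vert b-b'\Vert^2 \le \Vert b\Vert^2$ gives $b'^Tb \ge \tfrac12\Vert b'\Vert^2 \ge 0$ and hence the key bound $(b-b')^Tb = \Vert b\Vert^2 - b'^Tb \le \Vert b\Vert^2$. The \emph{else} branch never changes $b'$, so these bounds persist. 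Now for part (ii): while the loop runs we have $\Vert c\Vert = \Vert A^T(b-b')\Vert > \varepsilon'$, so if $\rho \ge 2\rho_{\varepsilon'}=2\Vert b\Vert^2/\varepsilon'$ then $\rho\Vert c\Vert > 2\Vert b\Vert^2 > \Vert b\Vert^2 \ge (b-b')^Tb$, which by Theorem \ref{prop0TA}(3) means $v_\rho$ is a strict pivot. Thus the \emph{else} branch cannot be entered, $\rho$ stays fixed, and the loop behaves exactly as Algorithm \ref{2.1} with this $\rho$. Theorem \ref{thm2pTA} then guarantees the loop exits in finitely many iterations, necessarily through one of its two clauses: $\Vert b-b'\Vert \le \varepsilon$, giving an $\varepsilon$-approximate solution of $Ax=b$, or $\Vert A^T(b-b')\Vert \le \varepsilon'$, giving an $\varepsilon'$-approximate solution of the normal equation. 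Finally, every \emph{if}-update writes $x'$ as a convex combination of the previous $x'$ and the point $\rho c/\Vert c\Vert$ of norm $\rho$, so inductively $\Vert x'\Vert \le \rho$; this yields $\Vert x_\varepsilon\Vert \le \rho$ and $\Vert x_{\varepsilon'}\Vert \le \rho$, proving (ii).

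For part (i) I would first show $\rho$ grows without bound unless the algorithm halts. The initial iterate forces the \emph{else} branch once (setting $\rho$ from $0$ to $\Vert b\Vert^2/\Vert A^Tb\Vert>0$), and thereafter each \emph{else} visit replaces $\rho$ by at least $2\rho$. By Theorem \ref{thm2pTA}, for any fixed $\rho$ the \emph{if}-branch iterations (being Algorithm \ref{2.1}) are finite in number before the loop either halts or produces a witness, i.e.\ an \emph{else} visit. Hence either the algorithm terminates — in which case the stopping clauses show it returned one of the two approximate solutions — or the \emph{else} branch is visited infinitely often and the geometric doubling drives $\rho\to\infty$. In the latter case $\rho$ surpasses $\Vert x_*\Vert$, and in fact surpasses the threshold $2\rho_{\varepsilon'}$ of part (ii); once that happens the argument of (ii) applies verbatim and the loop terminates with an $\varepsilon$- or $\varepsilon'$-approximate solution satisfying $\Vert x'\Vert\le\rho$. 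Corollary \ref{cor1} reinforces this: a genuine witness forces $(b-b')^Tb/\Vert c\Vert < \Vert x_*\Vert$, so no genuine witness can survive once $\rho\ge\Vert x_*\Vert$, consistent with the freezing of $\rho$.

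The step I expect to be the main obstacle is reconciling the \emph{else}-branch test with the two stopping criteria in the \emph{unsolvable} case of part (i). There the nearest point of $E_{A,\rho}$ to $b$ is the projection $b_*=Ax_*$, along which $A^T(b-b')\to 0$ but $(b-b')^Tb\to\Vert b-b_*\Vert^2>0$; naively the strict–pivot inequality $\rho\Vert c\Vert\ge(b-b')^Tb$ can fail near $b_*$, seemingly triggering endless \emph{else} visits. The resolution — and the delicate part of the write-up — is that the normal-equation stopping clause $\Vert A^T(b-b')\Vert\le\varepsilon'$ fires before this can happen once $\rho$ is past the $2\rho_{\varepsilon'}$ threshold, which is exactly what the quantitative bound of part (ii) secures; this is why I would prove (ii) first and use it to close (i), rather than attempt to argue the unsolvable case directly.
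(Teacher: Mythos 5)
The paper never actually proves Theorem \ref{finalthm}: it is stated as a summary of results imported from \cite{kalantari23}, so there is no in-paper proof to compare against, and your argument must be judged on its own against the ingredients the paper does state (Proposition \ref{prop1TA}, Theorem \ref{prop0TA}, Theorems \ref{thm1TA} and \ref{thm2pTA}, Corollary \ref{cor1}). Judged that way, your proof is correct, and proving (ii) first and bootstrapping (i) from it is a sound organization. The two load-bearing steps are both right: (a) since $b'$ starts at $0$, the \emph{else} branch never moves $b'$, and each \emph{if} step moves $b'$ to the nearest point to $b$ on the segment $[b',v_\rho]$, the gap $\Vert b-b'\Vert$ is non-increasing, hence $\Vert b-b'\Vert\le\Vert b\Vert$ and $(b-b')^Tb\le\Vert b\Vert^2$ at every iterate; and (b) while the loop runs, $\Vert c\Vert>\varepsilon'$, so $\rho\ge 2\Vert b\Vert^2/\varepsilon'$ gives $\rho\Vert c\Vert>2\Vert b\Vert^2\ge (b-b')^Tb$, whence by Theorem \ref{prop0TA}(3) the witness branch is unreachable, $\rho$ freezes, and Theorem \ref{thm2pTA} forces exit through one of the two while-clauses, with $\Vert x'\Vert\le\rho$ maintained inductively. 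Your diagnosis of the delicate point is also correct: in the unsolvable case the pivot test genuinely would fail for iterates near the projection $Ax_*$, but once $\rho\ge 2\rho_{\varepsilon'}$ it can only fail after $\Vert A^T(b-b')\Vert\le\varepsilon'$ has already ended the loop.

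Three things should be made explicit in a full write-up, though none is a gap in the logic. First, both the monotonicity in (a) and the bound $\Vert x'\Vert\le\rho$ require the projection parameter to satisfy $\alpha\in(0,1]$; this holds because the \emph{if} branch is taken only when $v_\rho$ is a strict pivot, and then $(v_\rho-b)^T(v_\rho-b')=\Vert v_\rho-b\Vert^2+(b-b')^T(v_\rho-b)\ge 0$, which is exactly $\alpha\le 1$, so the $x'$-update is a genuine convex combination of $x'$ and the point $\rho c/\Vert c\Vert$ of norm $\rho$. Second, Theorem \ref{thm2pTA} is stated for Algorithm \ref{2.1} initialized at $b'=0$, whereas you invoke it for a fixed-$\rho$ phase starting from whatever iterate the previous phase left behind; this is harmless because the iteration bound depends only on the diameter of $E_{A,\rho}$ and your induction shows every iterate satisfies $\Vert x'\Vert\le\rho$, so each phase does start inside $E_{A,\rho}$, but the point deserves a sentence. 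Third, part (i) cannot be read literally, since the algorithm may terminate before $\rho$ ever reaches $\Vert x_*\Vert$; your dichotomy (either terminate, or $\rho$ doubles past $\max\{\Vert x_*\Vert,2\rho_{\varepsilon'}\}$ and part (ii) finishes) is the right rendering of the statement. One refinement worth noting: in the solvable case you can conclude earlier than the $2\rho_{\varepsilon'}$ threshold, because $\rho\ge\Vert x_*\Vert$ already puts $b\in E_{A,\rho}$, and Theorem \ref{thm1TA} then rules out witnesses outright, freezing $\rho$ at that point; your uniform threshold argument is cleaner but pays for it with a potentially larger terminal $\rho$.
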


\begin{remark}  Initially we may choose $\varepsilon'=\varepsilon$. If $Ax=b$ admits an $\varepsilon$-approximate solution,  Algorithm \ref{2.2} may still terminate with an $\varepsilon'$-approximate solution to the normal equation. In such case we can halve the value of $\varepsilon'$ and restart the algorithm with the current approximate solution. There are many heuristic approaches to improve the speed of the algorithm.
If $\Vert Ax -b \Vert$ does not reduce much, becoming evident that $Ax=b$ is not solvable, we can input $A^TAx=A^Tb$ as the main equation, starting with the current iterate $x'$.
\end{remark}

\subsection{TA for Computing Approximation to  Minimum-Norm Solution} \label{sec2.3}

Suppose we are given $x_\varepsilon$, an $\varepsilon$-approximate
solution of $Ax=b$.  We will start with this to compute an $\varepsilon$-approximate
minimum-norm solution of $Ax=b$ (see Definition \ref{minnormapp}). That is, we wish to compute an $\varepsilon$-approximate solution, $x^*_\varepsilon$ such that  $\Vert Ax_\varepsilon^* - b \Vert \leq \varepsilon$, and
a number $\underline \rho$ such that $\Vert Ax - b \Vert \leq \varepsilon$ is not possible for any $x$ with $Ax \in E^\circ_{A, \underline \rho}$, also $\underline \rho$ must satisfy $\overline \rho - \underline \rho \leq \varepsilon$, where
$\overline \rho = \Vert x^*_\varepsilon \Vert$.  Algorithm \ref{2.5}  accomplishes this by systematically  decreasing $\overline \rho$ or increasing $\underline \rho$. The initial $\overline \rho = \Vert x_\varepsilon \Vert$, $x_\varepsilon$ the initial approximate solution, and the initial $\underline \rho =0$.   Geometrically, given an $\varepsilon$-approximate solution to $Ax=b$,
the algorithm start with the ellipsoid $E_{A, \Vert x_\varepsilon \Vert}$ as an upper-bounding ellipsoid. It also uses the origin as the starting lower-bounding ellipsoid so that $\underline \rho=0$.  It then tests if there is an $\varepsilon$-approximate solution in $E_{A, \rho}$, where $\rho=(\overline \rho + \underline \rho)/2$. If so, a new upper-bounding ellipsoid is found.  Otherwise, this ellipsoid becomes the new lower-bounding ellipsoid so that $\underline \rho$ is updated to be $\rho$ and the process is repeated. Each time we test if $E_{A, \rho}$ admits an $\varepsilon$-approximate solution we use the last witness, except for the first time where we start with $x'=0$.   Figure \ref{minnormx} shows the first iteration which consists in halving $\overline \rho= \Vert x_\varepsilon \Vert$. The corresponding ellipsoid (the small ellipsoid in the figure) does not contain an $\varepsilon$-approximate solution, hence it computes a witness. The strict pivot property of the Triangle Algorithm  allows expanding it to a larger ellipsoid whose interior does not contain $b$, resulting in a better bound $\underline \rho$. Hence the
gap $\overline \rho - \underline \rho$ is reduced by a factor better than $1/2$. Algorithm \ref{2.5} describes the process for a general matrix $A$.

\begin{algorithm}[!htb]
\SetAlgoNoLine
\KwIn{$A \in \mathbb{R}^{m \times n}$, $b \in \mathbb{R}^m$, $b \not =0$,
$\varepsilon$, $x_\varepsilon$ ($\varepsilon$-approximate solution of $Ax=b$)}
$\overline \rho \gets \Vert x_\varepsilon \Vert$, $\underline \rho \gets 0$,

$\rho \gets \frac{1}{2}(\overline \rho + \underline \rho)$,
$x' \gets 0$, $b' \gets Ax'$.

\While{$\overline \rho - \underline \rho > \varepsilon$}{
 \While{$((\|b - b'\| > \varepsilon) \wedge (\Vert A^T(b-b') \Vert > 0)$}
 {$c \gets A^T(b-b')$, $v_\rho \gets \rho
A\frac{c}{\|c\|}$.

  \lIf{$ \rho \Vert c \Vert \geq (b-b')^Tb$} { $\alpha \gets (b - b')^T(v_\rho - b')/\|v_\rho - b'\|^2$,\\
$b' \gets (1-\alpha) b' + \alpha v_\rho$, \quad $x' \gets (1-\alpha) x' +  \alpha \frac{\rho c}{\Vert c \Vert}$}
   \Else{{ STOP }}}
     \lIf{$\Vert b-b' \Vert \leq \varepsilon$} { $\overline \rho \gets \rho$}
   \Else{{$\underline \rho \gets \frac{(b-b')^Tb }{\Vert c \Vert}$}}}
    \caption{(TA) Given $\varepsilon$-approximate solution of $Ax=b$,  the algorithm computes an $\varepsilon$-approximate minimum-norm solution of
    $Ax=b$ or a solution to $A^TAx=A^Tb$ (final $x'$ is approximate solution).} \label{2.5}
\end{algorithm}

\begin{theorem} Algorithm \ref{2.5} either computes a solution to the normal equation (when $c=0$) or it computes an $\varepsilon$-approximate minimum-norm solution to $Ax=b$.  Moreover, if $k$ is smallest integer such that $\Vert x_\varepsilon \Vert/2^k \leq \varepsilon$, the number of iterations in the while loop is at most $k$. In other words the complexity of Algorithm \ref{2.5} is at most $O\big ( \ln (\Vert x_\varepsilon \Vert/\varepsilon) \big )$ times the complexity of Algorithm \ref{2.1}.
\qed
\end{theorem}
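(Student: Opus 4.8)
The plan is to prove the two assertions separately: first correctness (the returned point meets Definition~\ref{minnormapp}, unless $c=0$ forces a normal-equation solution), then the iteration bound. The organizing idea is that the inner \emph{while} loop is exactly a run of Algorithm~\ref{2.1} over the fixed ellipsoid $E_{A,\rho}$ with $\rho=\tfrac12(\overline\rho+\underline\rho)$, so by Proposition~\ref{propxTA} and Theorem~\ref{prop0TA} each pass terminates in one of three ways: with an $\varepsilon$-approximate solution $b'$ satisfying $\Vert b-b'\Vert\le\varepsilon$ and $\Vert x'\Vert\le\rho$; with $c=A^T(b-b')=0$; or with a $b$-witness $b'\in E_{A,\rho}$ admitting no strict pivot. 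I would first dispose of the $c=0$ case: here Theorem~\ref{prop0TA}(1) gives that $x'$ solves $A^TAx=A^Tb$ and the algorithm returns it, which is the first alternative in the statement. For the remainder assume $c\neq 0$ throughout, so each outer pass ends either in an upper update ($\overline\rho\gets\rho$) or a lower update ($\underline\rho\gets(b-b')^Tb/\Vert c\Vert$).

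Next I would maintain two invariants across the outer loop. The \emph{lower invariant} is $\underline\rho\le\Vert x_*\Vert$ together with $b\notin E^\circ_{A,\underline\rho}$. It holds initially with $\underline\rho=0$, and whenever a lower update occurs the exiting iterate $b'$ is a witness, so Corollary~\ref{cor1} gives $\underline\rho_{\mathrm{new}}=(b-b')^Tb/\Vert A^T(b-b')\Vert<\Vert x_*\Vert$; since any exact solution has norm at least $\Vert x_*\Vert>\underline\rho_{\mathrm{new}}$, and otherwise $b\notin\mathrm{range}(A)$, in both cases $b\notin E^\circ_{A,\underline\rho_{\mathrm{new}}}$. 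The \emph{upper invariant} is that $\overline\rho$ is the norm of a stored $\varepsilon$-approximate solution: it holds initially with $\overline\rho=\Vert x_\varepsilon\Vert$, and each upper update stores $b'=Ax'$ with $\Vert x'\Vert\le\rho=\overline\rho_{\mathrm{new}}$ and $\Vert Ax'-b\Vert\le\varepsilon$.

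The crux of both the correctness gap and the complexity is that every outer pass shrinks $\overline\rho-\underline\rho$ by at least a factor of two. An upper update replaces $\overline\rho$ by the midpoint $\rho$, so the new gap is exactly $\tfrac12(\overline\rho-\underline\rho)$. A lower update is where the strict-pivot structure enters: because $b'$ is a witness, the negation of Theorem~\ref{prop0TA}(3) gives $\rho\Vert c\Vert<(b-b')^Tb$, i.e. $\underline\rho_{\mathrm{new}}=(b-b')^Tb/\Vert c\Vert>\rho$, so the new gap $\overline\rho-\underline\rho_{\mathrm{new}}<\overline\rho-\rho=\tfrac12(\overline\rho-\underline\rho)$. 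By induction the gap after $j$ outer passes is at most $\Vert x_\varepsilon\Vert/2^{\,j}$, so the test $\overline\rho-\underline\rho>\varepsilon$ fails once $\Vert x_\varepsilon\Vert/2^{\,j}\le\varepsilon$, i.e. after at most $k$ passes. Since each pass is a single call to Algorithm~\ref{2.1} and $k=O(\ln(\Vert x_\varepsilon\Vert/\varepsilon))$, the total cost is $O(\ln(\Vert x_\varepsilon\Vert/\varepsilon))$ times that of Algorithm~\ref{2.1}.

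Finally I would read off Definition~\ref{minnormapp} at termination: the stored approximate solution $x_\varepsilon^*$ gives condition~1, the lower invariant supplies a $\underline\rho>0$ with $b\notin E^\circ_{A,\underline\rho}$ toward condition~2, and $\rho_\varepsilon-\underline\rho\le\overline\rho-\underline\rho\le\varepsilon$ gives condition~3. The step I expect to be the main obstacle is the remaining half of condition~2, namely $\underline\rho\le\rho_\varepsilon=\Vert x_\varepsilon^*\Vert$: one must argue the returned solution can be taken with norm in $[\underline\rho,\overline\rho]$ rather than below $\underline\rho$, reconciling the two invariants, which a priori measure slightly different things (containment of an $\varepsilon$-approximate solution for $\overline\rho$ versus exact non-membership of $b$ for $\underline\rho$). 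I would resolve this by taking $\rho_\varepsilon=\overline\rho$ via the boundary iterate realizing $\overline\rho$ and using $\underline\rho<\overline\rho$ from the gap analysis, so that all three conditions hold simultaneously.
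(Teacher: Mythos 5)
Your plan follows the same route the paper intends (the paper states this theorem with the proof omitted, but the discussion preceding Algorithm \ref{2.5} and the caption of Figure \ref{minnormx} outline exactly this bisection argument): each pass of the inner loop is one run of Algorithm \ref{2.1} at the midpoint radius $\rho=\tfrac12(\overline\rho+\underline\rho)$; an upper update halves the gap exactly; and when the strict-pivot test fails, $\underline\rho_{\mathrm{new}}=(b-b')^Tb/\Vert c\Vert>\rho$, so a lower update shrinks the gap by better than half, with $b\notin E^\circ_{A,\underline\rho_{\mathrm{new}}}$ following from Corollary \ref{cor1} (or directly: $b=Ax$ with $\Vert x\Vert<\underline\rho_{\mathrm{new}}$ would give $(b-b')^Tb=c^Tx\le\Vert c\Vert\,\Vert x\Vert<(b-b')^Tb$). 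This yields the bound of $k$ outer passes and the stated complexity; that part of your argument is sound.

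The genuine gap is in your final step, and your proposed repair does not work. There is no ``boundary iterate realizing $\overline\rho$'': every iterate produced in a pass at radius $\rho$ is a convex combination $(1-\alpha)x'+\alpha\,\rho c/\Vert c\Vert$ of points of norm at most $\rho$, so the stored solution satisfies only $\Vert x'\Vert\le\rho$, in general strictly, and nothing prevents $\Vert x'\Vert<\underline\rho$ (the lower invariant excludes exact solutions of norm below $\underline\rho$, not $\varepsilon$-approximate ones). Since $\rho_\varepsilon$ is by definition the norm of the returned point, you cannot simply declare $\rho_\varepsilon=\overline\rho$. The correct fix is monotonicity of non-membership in the radius: if at termination $\rho_\varepsilon=\Vert x'\Vert<\underline\rho$, replace the certificate radius by $\underline\rho':=\rho_\varepsilon$; then $E^\circ_{A,\underline\rho'}\subseteq E^\circ_{A,\underline\rho}$ gives $b\notin E^\circ_{A,\underline\rho'}$, and conditions 2 and 3 of Definition \ref{minnormapp} hold with gap $\rho_\varepsilon-\underline\rho'=0$. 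Otherwise $\underline\rho\le\rho_\varepsilon\le\overline\rho\le\underline\rho+\varepsilon$ and the algorithm's own $\underline\rho$ certifies both conditions. One further case your invariants do not cover: if termination occurs before any lower update, then $\underline\rho=0$ fails the positivity requirement; there $\overline\rho\le\varepsilon$, and a positive certificate radius still exists (take $\min\{\Vert x_*\Vert,\rho_\varepsilon\}$ when $Ax=b$ is solvable, and $\rho_\varepsilon$ itself otherwise, noting Definition \ref{minnormapp} only requires existence), except in the degenerate situation $\rho_\varepsilon=0$, i.e. $\Vert b\Vert\le\varepsilon$, which the paper implicitly ignores. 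With these substitutions your proof is complete.
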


\begin{figure}[htpb]
	\centering
	\raisebox{10ex}{
	\begin{tikzpicture}[scale=.7]
	\begin{scope}
	\begin{scope}
	[rotate=47.5829]
	\draw \boundellipse{0,0}{2.81}{1.21};
	\filldraw (0,0) circle (.5pt) node[below] {$O$};
	\end{scope}
	\filldraw (0,2.79) circle (.5pt)  node[above] {$b$};
\filldraw (-.2,3.06) circle (.5pt)  node[above] {$\overline b$};
\filldraw (-2.5,-2.79)   node[above] {$E_{A, \widehat \rho}$};
\filldraw (-2,-2.4)   node[above] {$E_{A, \underline \rho}$};
\filldraw (-3.90,-4.)   node[above] {$E_{A, \overline \rho}$};
	\filldraw (0.9815,2.158)  circle (.5pt)  node[above] {$v_\rho$};
	\filldraw (0.6580,0.75)  circle (.5pt)  node[left] {$b'$};
	\draw[dashed] (-1.37,1.4)--(2.69,2.709);
	\draw (0.6580,0.75)--(0.9815,2.158)--(0,2.79)--(0.6580,0.75);
    \begin{scope}
	[rotate=47.5829]
	\draw \boundellipse{0,0}{5.62}{2.42};
	\filldraw (0,0) circle (.5pt) node[below] {$O$};
	\end{scope}
	\end{scope}
     \begin{scope}
	[rotate=47.5829]
	\draw \boundellipse{0,0}{3.653}{1.573};
	\filldraw (0,0) circle (.5pt) node[below] {$O$};
	\end{scope}
\begin{scope}
	[rotate=47.5829]
	\draw \boundellipse{0,0}{3.653*1.31}{1.573*1.31};
	\filldraw (0,0) circle (.5pt) node[below] {$O$};
	\end{scope}
	\end{tikzpicture}}
	\caption{{\small $E_{A, \overline \rho}$ is initial ellipsoid, in first iteration $E_{A, \underline \rho}$ gives a witness $b'$. It is improved to  larger lower bounding ellipsoid $E_{A, \widehat \rho}$. $\underline \rho$ is replaced with $\widehat \rho$. The next ellipsoid to  test has $\rho =(\overline \rho + \underline \rho)/2$. The process is repeated until $\overline \rho - \underline \rho \leq \varepsilon$.}}
	\label{minnormx}
\end{figure}
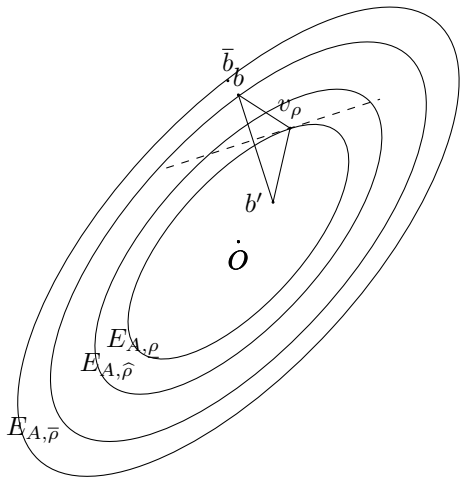

\section{Computational Results for CTA and TA} \label{sec5}
In this section, we show strong, positive, and broad computational results for CTA, in the case of square matrices, and hybrid CTA/TA algorithm, in the case of rectangular matrices. The matrices we gathered for our computational experiments encompass a variety of problems, from matrices describing problems in Computational Fluid Dynamics and ODES to linear programming (LP). Matrices are not only randomly generated, as in the case of Clement and Dorr matrices, but are also taken from well-known libraries such as Matrix Market and Suite Sparse libraries \cite{matrixmarket, suitesparse}. We first present results for square matrices, then for rectangular matrices. Experiments for both settings were run in Matlab R2022a.

In practice, popular iterative methods to solve a general (non-symmetric) linear system are the {\it Generalized minimal residual method} (GMRES), and the {\it bi-conjugate gradient method stabilized} (BiCGSTAB), a Krylov subspace method that has faster and smoother convergence than {\it conjugate gradient squared method} (CGS). For details, see Chen \cite{Chen}, Liesen and Strakos \cite{Liesen}, van der Vorst \cite{Vorst1}, Saad \cite{Saad}, Simoncini and Szyld \cite{Szyld}.

We will compare our implementations of CTA and TA to the current state-of-the-art algorithms. For the case of square matrices, we compare with GMRES, and for the case of rectangular matrices, with Matlab's implementation of ``lsqminnorm". In preliminary experimentation with symmetric positive definite matrices, we found that GMRES outperforms other methods such as BiCGSTAB and LU factorization. We therefore do not make comparisons between CTA and these other methods.  In practice GMRES is run for a number of iterations much less than the dimension and the process is repeated with the resulting approximate solution as the initial solution of the next round. This is referred as GMRES with restart. For a discussion of such approaches for solving nonsymmetric matrices for which GMRES was designed, see e.g. Baker, Jessup and Kolev \cite{Baker9} and Saad  and Schultz \cite{saad1986gmres}.  

In the current literature, there is no mention of a linear system solver implementation that is free of constraints on the coefficient matrix $A$. Our implementation of CTA and TA, to be made public in this link\footnote{{\tt https://github.com/cleunglau}}, is robust in that our implementation does not necessitate any constraints on $A$. For the case of square matrices, our implementation uses CTA. For the case of rectangular matrices, our implementation has additional subroutines for the hybrid CTA/TA algorithm. In the case of computations with large, sparse linear systems, we used Matlab's parallel computing toolbox for parallel computation. In summary, a practitioner who uses our implementation will find it easy to use. In fact, we always use the matrix $H = AA^T$. The latter is because in preliminary experimentation on symmetric positive definite matrices, we found little difference in using $H = A$ as compared to $H = AA^T$. 

\subsection{Computational Results for Square Matrices} \label{SecSqMat}
In this section, we show computational results for square matrices $A$, where $A$ is $n \times n$. Our results show that CTA outperforms GMRES, the current state-of-the-art algorithm for solving square linear systems. For an encompassing variety of matrices ranging from full-rank positive definite matrices to real world matrices used in ODEs and problems in Computational Fluid Dynamics, CTA exhibits better runtimes and quality of solutions than GMRES.

For our computations, we used our own implementation of CTA and Matlab's implementation of GMRES, solving $Ax = b$ where each component of $b$ is the row-sum vector of $A$, and therefore $x = (1, ..., 1)^T$ is a solution. Before running our experiments, the parameter $\varepsilon = 10^{-15}$ was set.  GMRES in practice runs with a restart parameter much less than the dimension of the matrix.  For a discussion of such approaches for solving nonsymmetric matrices for which GMRES was designed, see e.g. Baker, Jessup and Kolev \cite{Baker9} and Saad  and Schultz \cite{saad1986gmres}.  Typically, restarted GMRES, executes a number of iterations of GMRES, then the resulting approximate solution serves as the initial guess to start the next set of iterations.

For a fair and accurate comparison of runtimes and quality of solutions, we first run GMRES on a given linear system. However, for large matrices, Matlab's implementation of GMRES might not attain a high order of accuracy, and therefore terminates with a solution where accuracy is lower than prescribed. We therefore also terminate CTA once the quality of the solution given by CTA surpasses that of GMRES. Nevertheless, our implementation of CTA is capable of producing solutions of any prescribed accuracy. 

Our implementation of CTA uses Algorithm \ref{1.1}. Given an initial $r_0$, we start with $t = 1$ to generate $r_1$, then $t = 2$ to generate $r_2$, etc. The effect of cycling is as follows: Given residual $r_0=b-Ax_0$, we compute 
$$r_k=F_k(r_{k-1}), \quad k=1, \cdots,5.$$
Now if $A$ is invertible with condition number $\kappa$, then from a result that connects the norm of to consecutive residuals in \cite{kalantari23} we will at least have
$$ \Vert r_5 \Vert \leq \Vert r_0 \Vert \bigg (  \frac{\kappa-1}{\kappa+1} \bigg )^{1+2+3+4+5}=\Vert r_0 \Vert \bigg ( \frac{\kappa-1}{\kappa+1} \bigg ) ^{15}.$$
We also found it useful to cycle back to $t = 1$. In other words, given $r_5$, we generate the next residual as $F_4(r_5)$, etc.

Our implementation uses sparse matrix computations when the matrix dimension is large, i.e. $n > 1000$. Therefore, our implementation of CTA can also be used for solving large sparse linear systems. For the implementation of GMRES, we used the function ``gmres(A,b,5)" in Matlab, with restart parameter equal to 5. 
The maximum iterations parameter was automatically set by the implementation of GMRES itself. 

\subsubsection{Generated Positive Definite Matrices} \label{SecPD_1}
The positive definite (PD) matrices generated here are taken from diagonal matrices $A$ where diagonal values range from $1$ to $3n$, where $n$ is the dimension of the matrix. 

\begin{table}[!htb]
\centering
\begin{tabular}{|ccccc|}
\hline
\multicolumn{5}{|l|}{Runtime (in seconds) for PD Matrices}                                   \\ \hline
\multicolumn{1}{|c|}{$n$}      & \multicolumn{1}{c|}{500}  & \multicolumn{1}{c|}{1000} & \multicolumn{1}{c|}{5000}  & 10000 \\ \hline
\multicolumn{1}{|c|}{CTA}  & \multicolumn{1}{c|}{0.33} & \multicolumn{1}{c|}{0.72} & \multicolumn{1}{c|}{8.31}  & 21.63 \\ \hline
\multicolumn{1}{|c|}{GMRES} & \multicolumn{1}{c|}{0.45} & \multicolumn{1}{c|}{0.91} & \multicolumn{1}{c|}{11.28} & 38.41 \\ \hline
\end{tabular}
\caption{Comparing runtime (in seconds) with CTA vs. GMRES for positive definite matrices for various dimensions $n$.}
\label{pd_fr1}
\end{table}

\begin{table}[!htb]
\centering
\begin{tabular}{|ccccc|}
\hline
\multicolumn{5}{|l|}{Number of Iterations for PD Matrices}                                                             \\ \hline
\multicolumn{1}{|c|}{$n$}      & \multicolumn{1}{c|}{500} & \multicolumn{1}{c|}{1000} & \multicolumn{1}{c|}{5000} & 10000 \\ \hline
\multicolumn{1}{|c|}{CTA}  & \multicolumn{1}{c|}{460} & \multicolumn{1}{c|}{520}  & \multicolumn{1}{c|}{2504} & 5349  \\ \hline
\multicolumn{1}{|c|}{GMRES}   & \multicolumn{1}{c|}{429} & \multicolumn{1}{c|}{936}  & \multicolumn{1}{c|}{4629} & 7300 \\ \hline
\end{tabular}
\caption{Comparing number of iterations with CTA vs. GMRES for positive definite matrices for various dimensions $n$.}
\label{pd_fr2}
\end{table}

\begin{table}[!htb]
\centering
\begin{tabular}{|ccccc|}
\hline
\multicolumn{5}{|c|}{Quality of Solutions for PD Matrices}                                                                                                           \\ \hline
\multicolumn{1}{|c|}{$n$}      & \multicolumn{1}{c|}{500}                           & \multicolumn{1}{c|}{1000}                          & \multicolumn{1}{c|}{5000}                          & 10000                         \\ \hline
\multicolumn{1}{|c|}{CTA}   & \multicolumn{1}{c|}{\num{1.0e-15}} & \multicolumn{1}{c|}{\num{1.0e-15}} & \multicolumn{1}{c|}{\num{1.0e-15}} & \num{1.0e-15} \\ \hline
\multicolumn{1}{|c|}{GMRES}     & \multicolumn{1}{c|}{\num{1.0e-15}} & \multicolumn{1}{c|}{\num{1.01e-15}} & \multicolumn{1}{c|}{\num{1.01e-15}} & \num{1.4e-15} \\ \hline
\end{tabular}
\caption{Comparing quality of solutions with CTA vs. GMRES for positive definite matrices for various dimensions $n$.}
\label{pd_fr3}
\end{table}

As noted by Tables \ref{pd_fr1}, \ref{pd_fr2}, and \ref{pd_fr3}, the results favor CTA considerably. For small dimensions $n \leq 500$, CTA is at least 30\% faster than GMRES. For large dimensions $n = 10000$, CTA is 45\% faster than GMRES. The quality of solutions for GMRES decreases to an order of $1.4 \times 10^{-15}$ for dimension 10000, whereas CTA can be used for any arbitrary precision. Figures \ref{fig:pdresid1} and \ref{fig:pdresid2} shows the comparison in relative residuals for CTA vs. GMRES for two test matrices of dimension $100 \times 100$ and $1000 \times 1000$. Not only does the relative residuals for CTA decrease faster than GMRES, but the decrease is steady, whereas the relative residuals using GMRES drop only in the last few iterations.

\begin{figure}[!htb]
\centering
\includegraphics[width=.8\textwidth]{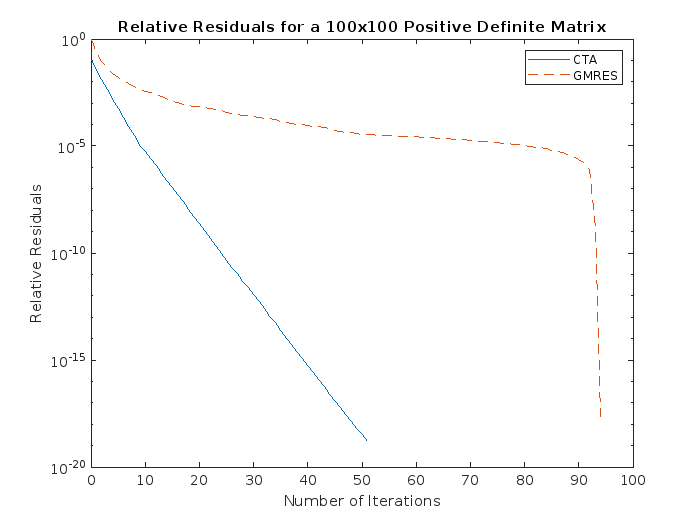}
\label{fig:pdresid1}
\caption{Comparison of relative residuals for CTA vs. GMRES for a positive definite matrix of dimension  $100 \times 100$.}
\end{figure}

\begin{figure}[!htb]
\centering
\includegraphics[width=.8\textwidth]{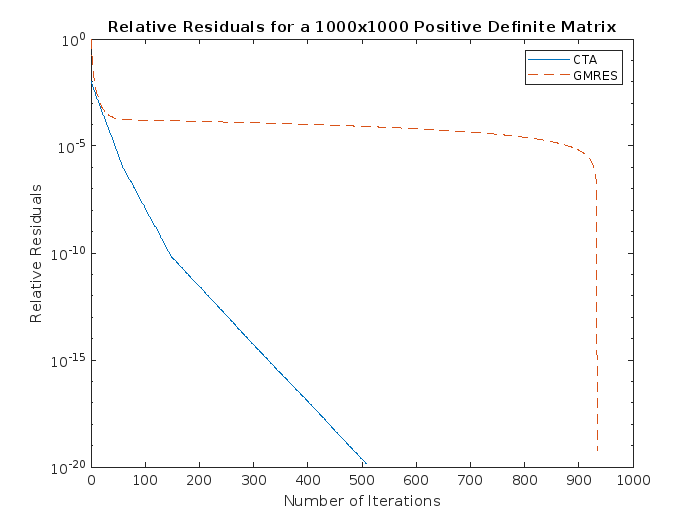}
\label{fig:pdresid2}
\caption{Comparison of relative residuals for CTA vs. GMRES for a positive definite matrix of dimension  $1000 \times 1000$.}
\end{figure}

\subsubsection{Generated Positive Semidefinite Matrices} \label{SecPSDD_1}
The positive semidefinite (PSD) matrices generated here are taken from two sets of data. The first set of data is diagonal matrices $A$ where diagonal values range from $0$ to $3n$, where $n$ is the dimension of the matrix. The second set of data is generated by the following method: Define $B$ to be a random $n \times n$ matrix generated using the Matlab command ``rand", then set $A = B^TB$. 

\begin{table}[!htb]
\centering
\begin{tabular}{|ccccc|}
\hline
\multicolumn{5}{|l|}{Runtime (in seconds) for PSD Matrices}                                   \\ \hline
\multicolumn{1}{|c|}{$n$}      & \multicolumn{1}{c|}{500}  & \multicolumn{1}{c|}{1000} & \multicolumn{1}{c|}{5000}  & 10000 \\ \hline
\multicolumn{1}{|c|}{CTA}  & \multicolumn{1}{c|}{0.34} & \multicolumn{1}{c|}{0.74} & \multicolumn{1}{c|}{9.24}  & 23.98 \\ \hline
\multicolumn{1}{|c|}{GMRES} & \multicolumn{1}{c|}{0.56} & \multicolumn{1}{c|}{1.32} & \multicolumn{1}{c|}{12.74} & 40.66 \\ \hline
\end{tabular}
\caption{Comparing runtime (in seconds) with CTA vs. GMRES for positive semidefinite matrices for various dimensions $n$.}
\label{psd_fr1}
\end{table}

\begin{table}[!htb]
\centering
\begin{tabular}{|ccccc|}
\hline
\multicolumn{5}{|l|}{Number of Iterations for PSD Matrices}                                                             \\ \hline
\multicolumn{1}{|c|}{$n$}      & \multicolumn{1}{c|}{500} & \multicolumn{1}{c|}{1000} & \multicolumn{1}{c|}{5000} & 10000 \\ \hline
\multicolumn{1}{|c|}{CTA}  & \multicolumn{1}{c|}{456} & \multicolumn{1}{c|}{501}  & \multicolumn{1}{c|}{2332} & 5101  \\ \hline
\multicolumn{1}{|c|}{GMRES}   & \multicolumn{1}{c|}{488} & \multicolumn{1}{c|}{962}  & \multicolumn{1}{c|}{4789} & 8197    \\ \hline
\end{tabular}
\caption{Comparing number of iterations with CTA vs. GMRES for positive semidefinite matrices for various dimensions $n$.}
\label{psd_fr2}
\end{table}

\begin{table}[!htb]
\centering
\begin{tabular}{|ccccc|}
\hline
\multicolumn{5}{|c|}{Quality of Solutions for PSD Matrices}                                                                                                           \\ \hline
\multicolumn{1}{|c|}{$n$}      & \multicolumn{1}{c|}{500}                           & \multicolumn{1}{c|}{1000}                          & \multicolumn{1}{c|}{5000}                          & 10000                         \\ \hline
\multicolumn{1}{|c|}{CTA}   & \multicolumn{1}{c|}{\num{1.0e-15}} & \multicolumn{1}{c|}{\num{1.0e-15}} & \multicolumn{1}{c|}{\num{1.0e-15}} & \num{1.0e-15} \\ \hline
\multicolumn{1}{|c|}{GMRES}     & \multicolumn{1}{c|}{\num{1.0e-15}} & \multicolumn{1}{c|}{\num{1.03e-15}} & \multicolumn{1}{c|}{\num{1.03e-15}} & \num{1.5e-15} \\ \hline
\end{tabular}
\caption{Comparing quality of solutions with CTA vs. GMRES for positive semidefinite matrices for various dimensions $n$.}
\label{psd_fr3}
\end{table}

As noted by Tables \ref{psd_fr1}, \ref{psd_fr2}, and \ref{psd_fr3}, the results favor CTA considerably. For small dimensions $n \leq 500$, CTA is at least 39\% faster than GMRES. For large dimensions $n = 10000$, CTA is 46\% faster than GMRES. The quality of solutions for GMRES only decreases to an order of $1.5 \times 10^{-15}$ for dimension 10000, whereas CTA can be used for any arbitrary precision. 

\subsubsection{Indefinite Matrices} \label{SecPSDD_2}
We also generated indefinite diagonal matrices: diagonal matrices $A$ where diagonal values contains positive, negative, and zero entries ranging from $-3n$ to $3n$, where $n$ is the dimension of the matrix.  

\begin{table}[!htb]
\centering
\begin{tabular}{|ccccc|}
\hline
\multicolumn{5}{|l|}{Runtime (in seconds) for Indefinite Matrices}                                   \\ \hline
\multicolumn{1}{|c|}{$n$}      & \multicolumn{1}{c|}{500}  & \multicolumn{1}{c|}{1000} & \multicolumn{1}{c|}{5000}  & 10000 \\ \hline
\multicolumn{1}{|c|}{CTA}  & \multicolumn{1}{c|}{0.34} & \multicolumn{1}{c|}{0.75} & \multicolumn{1}{c|}{9.38}  & 24.44 \\ \hline
\multicolumn{1}{|c|}{GMRES} & \multicolumn{1}{c|}{0.62} & \multicolumn{1}{c|}{1.55} & \multicolumn{1}{c|}{13.09} & 42.31 \\ \hline
\end{tabular}
\caption{Comparing runtime (in seconds) with CTA vs. GMRES for indefinite matrices for various dimensions $n$.}
\label{ind_fr1}
\end{table}

\begin{table}[!htb]
\centering
\begin{tabular}{|ccccc|}
\hline
\multicolumn{5}{|l|}{Number of Iterations for Indefinite Matrices}                                                             \\ \hline
\multicolumn{1}{|c|}{$n$}      & \multicolumn{1}{c|}{500} & \multicolumn{1}{c|}{1000} & \multicolumn{1}{c|}{5000} & 10000 \\ \hline
\multicolumn{1}{|c|}{CTA}  & \multicolumn{1}{c|}{456} & \multicolumn{1}{c|}{501}  & \multicolumn{1}{c|}{2332} & 5101  \\ \hline
\multicolumn{1}{|c|}{GMRES}   & \multicolumn{1}{c|}{490} & \multicolumn{1}{c|}{985}  & \multicolumn{1}{c|}{4788} & 9987    \\ \hline
\end{tabular}
\caption{Comparing number of iterations with CTA vs. GMRES for indefinite matrices for various dimensions $n$.}
\label{ind_fr2}
\end{table}

\begin{table}[!htb]
\centering
\begin{tabular}{|ccccc|}
\hline
\multicolumn{5}{|c|}{Quality of Solutions for Indefinite Matrices}                                                                                                           \\ \hline
\multicolumn{1}{|c|}{$n$}      & \multicolumn{1}{c|}{500}                           & \multicolumn{1}{c|}{1000}                          & \multicolumn{1}{c|}{5000}                          & 10000                         \\ \hline
\multicolumn{1}{|c|}{CTA}   & \multicolumn{1}{c|}{\num{1.0e-15}} & \multicolumn{1}{c|}{\num{1.0e-15}} & \multicolumn{1}{c|}{\num{1.0e-15}} & \num{1.0e-15} \\ \hline
\multicolumn{1}{|c|}{GMRES}     & \multicolumn{1}{c|}{\num{1.0e-15}} & \multicolumn{1}{c|}{\num{1.1e-15}} & \multicolumn{1}{c|}{\num{1.2e-15}} & \num{1.8e-15} \\ \hline
\end{tabular}
\caption{Comparing quality of solutions with CTA vs. GMRES for indefinite matrices for various dimensions $n$.}
\label{ind_fr3}
\end{table}

As noted by Tables \ref{ind_fr1}, \ref{ind_fr2}, and \ref{ind_fr3}, the results favor CTA considerably. For small dimensions $n \leq 500$, CTA is at least 39\% faster than GMRES. For large dimensions $n = 10000$, CTA is 46\% faster than GMRES. Compared to positive definite and positive semidefinite cases, the quality of solutions for GMRES in the indefinite case is worse, but uses more iterations. The quality of solutions for GMRES only decreases to an order of $1.8 \times 10^{-15}$ for dimension 10000, whereas CTA can be used for any arbitrary precision. 

\subsubsection{Poisson Equation with Dirichlet Boundary Conditions} \label{SecPSD_2}
Suppose $\Omega = (0, 1) \times (0, 1)$ is the unit square. Denote $\Gamma = \partial\Omega$ as the boundary of $\Omega$. The Poisson equation with Dirichlet boundary conditions is given as follows:
\[
-\triangle u = -\left(\frac{\partial^2 u}{\partial x^2} + \frac{\partial^2 u}{\partial y^2}\right) = f \text{ in } \Omega, \quad u = g \text{ on } \Gamma = \partial\Omega.
\]

Solving this setting by discretization using finite difference methods on a 4-point stencil yields the linear system $Au = f$, where $A$ is positive definite. Details on how $A$ is generated as well as the code to generate $A$ are given in Chen \cite{poisson}.

\begin{table}[!htb]
\centering
\begin{tabular}{|ccccc|}
\hline
\multicolumn{5}{|l|}{Runtime (in seconds) for PE Matrices}                                   \\ \hline
\multicolumn{1}{|c|}{$n$}      & \multicolumn{1}{c|}{500}  & \multicolumn{1}{c|}{1000} & \multicolumn{1}{c|}{5000}  & 10000 \\ \hline
\multicolumn{1}{|c|}{CTA}  & \multicolumn{1}{c|}{0.30} & \multicolumn{1}{c|}{0.73} & \multicolumn{1}{c|}{8.01}  & 22.22 \\ \hline
\multicolumn{1}{|c|}{GMRES}  & \multicolumn{1}{c|}{0.44} & \multicolumn{1}{c|}{0.87} & \multicolumn{1}{c|}{10.72} & 37.43 \\ \hline
\end{tabular}
\caption{Comparing runtime (in seconds) with CTA vs. GMRES for positive definite matrices generated for the Poisson Equation with Dirichlet Boundary Conditions for various dimensions $n$.}
\label{psd_ped1}
\end{table}

\begin{table}[!htb]
\centering
\begin{tabular}{|ccccc|}
\hline
\multicolumn{5}{|l|}{Number of Iterations for PE Matrices}                                                             \\ \hline
\multicolumn{1}{|c|}{$n$}      & \multicolumn{1}{c|}{500} & \multicolumn{1}{c|}{1000} & \multicolumn{1}{c|}{5000} & 10000 \\ \hline
\multicolumn{1}{|c|}{CTA}    & \multicolumn{1}{c|}{452} & \multicolumn{1}{c|}{501}  & \multicolumn{1}{c|}{2332} & 5099  \\ \hline
\multicolumn{1}{|c|}{GMRES}    & \multicolumn{1}{c|}{431} & \multicolumn{1}{c|}{917}  & \multicolumn{1}{c|}{4427} & 7199  \\ \hline
\end{tabular}
\caption{Comparing number of iterations with CTA vs. GMRES for positive definite matrices generated for the Poisson Equation with Dirichlet Boundary Conditions for various dimensions $n$.}
\label{psd_ped2}
\end{table}

\begin{table}[!htb]
\centering
\begin{tabular}{|ccccc|}
\hline
\multicolumn{5}{|c|}{Quality of Solutions for PE Matrices}                                                                                                           \\ \hline
\multicolumn{1}{|c|}{$n$}      & \multicolumn{1}{c|}{500}                           & \multicolumn{1}{c|}{1000}                          & \multicolumn{1}{c|}{5000}                          & 10000                         \\ \hline
\multicolumn{1}{|c|}{CTA}   & \multicolumn{1}{c|}{\num{1.0e-15}} & \multicolumn{1}{c|}{\num{2.1e-15}} & \multicolumn{1}{c|}{\num{9.5e-15}} & \num{9.6e-14} \\ \hline
\multicolumn{1}{|c|}{GMRES}   & \multicolumn{1}{c|}{\num{1.1e-15}} & \multicolumn{1}{c|}{\num{3.4e-15}} & \multicolumn{1}{c|}{\num{1.1e-14}} & \num{1.3e-13} \\ \hline
\end{tabular}
\caption{Comparing quality of solutions with CTA vs. GMRES for positive definite matrices generated for the Poisson Equation with Dirichlet Boundary Conditions for various dimensions $n$.}
\label{psd_ped3}
\end{table}

As noted by Tables \ref{psd_ped1}, \ref{psd_ped2}, and \ref{psd_ped3}, the results show strong favorability for CTA over GMRES in the case of Poisson equation with Dirichlet boundary conditions. For small dimensions $n \leq 500$, CTA is at least 32\% faster than GMRES. For large dimensions $n = 10000$, CTA is 41\% faster than GMRES. The quality of solutions for GMRES decreases to an order of $10^{-13}$ for dimension 10000, whereas CTA can be used for any arbitrary precision. 

\subsubsection{Poisson Equation with Neumann Boundary Conditions} \label{SecPSD_3}
Suppose $\Omega$ and $\Gamma$ are defined as in the previous section, with $n$ being the dimension. The Poisson equation with Neumann boundary conditions is given as follows:
\[
-\triangle u = -\left(\frac{\partial^2 u}{\partial x^2} + \frac{\partial^2 u}{\partial y^2}\right) = f \text{ in } \Omega, \quad \frac{\partial u}{\partial n} = g \text{ on } \Gamma.
\]

Solving this setting by discretization using finite difference methods on a 4-point stencil yields the linear system $Au = f$, where $A$ is positive semi-definite. This case is different from the Dirichlet boundary conditions setting, where a more accurate approximation of $\frac{\partial u}{\partial n}$ requires careful consideration. Details on how $A$ is generated as well as the code to generate $A$ are given in Chen \cite{poisson}.

\begin{table}[!htb]
\centering
\begin{tabular}{|ccccc|}
\hline
\multicolumn{5}{|l|}{Runtime (in seconds) for PE Matrices}                                   \\ \hline
\multicolumn{1}{|c|}{$n$}      & \multicolumn{1}{c|}{500}  & \multicolumn{1}{c|}{1000} & \multicolumn{1}{c|}{5000}  & 10000 \\ \hline
\multicolumn{1}{|c|}{CTA}  & \multicolumn{1}{c|}{0.30} & \multicolumn{1}{c|}{0.73} & \multicolumn{1}{c|}{8.03}  & 22.30 \\ \hline
\multicolumn{1}{|c|}{GMRES} & \multicolumn{1}{c|}{0.45} & \multicolumn{1}{c|}{0.88} & \multicolumn{1}{c|}{10.82} & 37.55 \\ \hline
\end{tabular}
\caption{Comparing runtime (in seconds) with CTA vs. GMRES for positive semi-definite matrices generated for the Poisson Equation with Neumann Boundary Conditions for various dimensions $n$.}
\label{psd_pen1}
\end{table}

\begin{table}[!htb]
\centering
\begin{tabular}{|ccccc|}
\hline
\multicolumn{5}{|l|}{Number of Iterations for PE Matrices}                                                             \\ \hline
\multicolumn{1}{|c|}{$n$}      & \multicolumn{1}{c|}{500} & \multicolumn{1}{c|}{1000} & \multicolumn{1}{c|}{5000} & 10000 \\ \hline
\multicolumn{1}{|c|}{CTA}  & \multicolumn{1}{c|}{451} & \multicolumn{1}{c|}{502}  & \multicolumn{1}{c|}{2344} & 5100  \\ \hline
\multicolumn{1}{|c|}{GMRES}    & \multicolumn{1}{c|}{433} & \multicolumn{1}{c|}{920}  & \multicolumn{1}{c|}{4435} & 7203  \\ \hline
\end{tabular}
\caption{Comparing number of iterations with CTA vs. GMRES for positive semi-definite matrices generated for the Poisson Equation with Neumann Boundary Conditions for various dimensions $n$.}
\label{psd_pen2}
\end{table}

\begin{table}[!htb]
\centering
\begin{tabular}{|ccccc|}
\hline
\multicolumn{5}{|c|}{Quality of Solutions for PE Matrices}                                                                                                           \\ \hline
\multicolumn{1}{|c|}{$n$}      & \multicolumn{1}{c|}{500}                           & \multicolumn{1}{c|}{1000}                          & \multicolumn{1}{c|}{5000}                          & 10000                         \\ \hline
\multicolumn{1}{|c|}{CTA}   & \multicolumn{1}{c|}{\num{1.0e-15}} & \multicolumn{1}{c|}{\num{2.2e-15}} & \multicolumn{1}{c|}{\num{9.6e-15}} & \num{9.7e-14} \\ \hline
\multicolumn{1}{|c|}{GMRES}   & \multicolumn{1}{c|}{\num{1.1e-15}} & \multicolumn{1}{c|}{\num{3.5e-15}} & \multicolumn{1}{c|}{\num{1.2e-14}} & \num{1.4e-13} \\ \hline
\end{tabular}
\caption{Comparing quality of solutions with CTA vs. GMRES for positive semi-definite matrices generated for the Poisson Equation with Neumann Boundary Conditions for various dimensions $n$.}
\label{psd_pen3}
\end{table}

As noted by Tables \ref{psd_pen1}, \ref{psd_pen2}, and \ref{psd_pen3}, the results show strong favorability for CTA over GMRES as well in the case of Poisson equation with Neumann boundary conditions. For small dimensions $n \leq 500$, CTA is at least 32\% faster than GMRES. For large dimensions $n = 10000$, CTA is 41\% faster than GMRES. The quality of solutions for GMRES decreases to an order of $10^{-13}$ for dimension 10000.

\subsubsection{Clement Tridiagonal Matrices} \label{SecTri_1}
Matrices generated by the Clement generator are tridiagonal matrices with zero diagonal. These matrices may be rank-deficient. More details on these matrices can also be found in Clement \cite{clement}.

\begin{table}[!htb]
\centering
\begin{tabular}{|ccccc|}
\hline
\multicolumn{5}{|l|}{Runtime (in seconds) for Clement Matrices}                                   \\ \hline
\multicolumn{1}{|c|}{$n$}      & \multicolumn{1}{c|}{500}  & \multicolumn{1}{c|}{1000} & \multicolumn{1}{c|}{5000}  & 10000 \\ \hline
\multicolumn{1}{|c|}{CTA}   & \multicolumn{1}{c|}{0.30} & \multicolumn{1}{c|}{0.69} & \multicolumn{1}{c|}{7.32}  & 21.63 \\ \hline
\multicolumn{1}{|c|}{GMRES}  & \multicolumn{1}{c|}{0.41} & \multicolumn{1}{c|}{0.85} & \multicolumn{1}{c|}{11.32} & 38.39 \\ \hline
\end{tabular}
\caption{Comparing runtime (in seconds) with CTA vs. GMRES for Clement tridiagonal matrices for various dimensions $n$.}
\label{tri_cl1}
\end{table}

\begin{table}[!htb]
\centering
\begin{tabular}{|ccccc|}
\hline
\multicolumn{5}{|l|}{Number of Iterations for Clement Matrices}                                                             \\ \hline
\multicolumn{1}{|c|}{$n$}      & \multicolumn{1}{c|}{500} & \multicolumn{1}{c|}{1000} & \multicolumn{1}{c|}{5000} & 10000 \\ \hline
\multicolumn{1}{|c|}{CTA}  & \multicolumn{1}{c|}{449} & \multicolumn{1}{c|}{511}  & \multicolumn{1}{c|}{2392} & 5117  \\ \hline
\multicolumn{1}{|c|}{GMRES}    & \multicolumn{1}{c|}{420} & \multicolumn{1}{c|}{920}  & \multicolumn{1}{c|}{4423} & 7192  \\ \hline
\end{tabular}
\caption{Comparing number of iterations with CTA vs. GMRES for Clement tridiagonal matrices for various dimensions $n$.}
\label{tri_cl2}
\end{table}

\begin{table}[!htb]
\centering
\begin{tabular}{|ccccc|}
\hline
\multicolumn{5}{|c|}{Quality of Solutions for Clement Matrices}                                                                                                           \\ \hline
\multicolumn{1}{|c|}{$n$}      & \multicolumn{1}{c|}{500}                           & \multicolumn{1}{c|}{1000}                          & \multicolumn{1}{c|}{5000}                          & 10000                         \\ \hline
\multicolumn{1}{|c|}{CTA}    & \multicolumn{1}{c|}{\num{1.2e-15}} & \multicolumn{1}{c|}{\num{2.2e-15}} & \multicolumn{1}{c|}{\num{9.8e-15}} & \num{9.8e-14} \\ \hline
\multicolumn{1}{|c|}{GMRES}   & \multicolumn{1}{c|}{\num{1.4e-15}} & \multicolumn{1}{c|}{\num{4.9e-15}} & \multicolumn{1}{c|}{\num{1.4e-14}} & \num{1.5e-13} \\ \hline
\end{tabular}
\caption{Comparing quality of solutions with CTA vs. GMRES for Clement tridiagonal matrices for various dimensions $n$.}
\label{tri_cl3}
\end{table}

As noted by Tables \ref{tri_cl1}, \ref{tri_cl2}, and \ref{tri_cl3}, the results show strong favorability for CTA over GMRES in the case of Clement generated matrices, especially for matrices with large dimension. For small dimensions $n \leq 500$, CTA is at least 27\% faster than GMRES. For large dimensions $n = 10000$, CTA is 44\% faster than GMRES. Similar to the case for PSD matrices, the quality of solutions for GMRES decreases to an order of $10^{-13}$ for dimension 10000.

\subsubsection{Dorr Tridiagonal Matrices} \label{SecTri_2}
Matrices generated by the Dorr generator are diagonally dominant M-matrices which may be ill-conditioned. These matrices are also invertible. More details can be found in Dorr \cite{dorr}.

\begin{table}[!htb]
\centering
\begin{tabular}{|ccccc|}
\hline
\multicolumn{5}{|l|}{Runtime (in seconds) for Dorr Matrices}                                   \\ \hline
\multicolumn{1}{|c|}{$n$}      & \multicolumn{1}{c|}{500}  & \multicolumn{1}{c|}{1000} & \multicolumn{1}{c|}{5000}  & 10000 \\ \hline
\multicolumn{1}{|c|}{CTA}    & \multicolumn{1}{c|}{0.32} & \multicolumn{1}{c|}{0.71} & \multicolumn{1}{c|}{8.95}  & 22.24 \\ \hline
\multicolumn{1}{|c|}{GMRES}   & \multicolumn{1}{c|}{0.42} & \multicolumn{1}{c|}{0.86} & \multicolumn{1}{c|}{11.44} & 39.61 \\ \hline
\end{tabular}
\caption{Comparing runtime (in seconds) with CTA vs. GMRES for Dorr tridiagonal matrices for various dimensions $n$.}
\label{tri_do1}
\end{table}

\begin{table}[!htb]
\centering
\begin{tabular}{|ccccc|}
\hline
\multicolumn{5}{|l|}{Number of Iterations for Dorr Matrices}                                                             \\ \hline
\multicolumn{1}{|c|}{$n$}      & \multicolumn{1}{c|}{500} & \multicolumn{1}{c|}{1000} & \multicolumn{1}{c|}{5000} & 10000 \\ \hline
\multicolumn{1}{|c|}{CTA}  & \multicolumn{1}{c|}{452} & \multicolumn{1}{c|}{517}  & \multicolumn{1}{c|}{2419} & 5172  \\ \hline
\multicolumn{1}{|c|}{GMRES}   & \multicolumn{1}{c|}{423} & \multicolumn{1}{c|}{924}  & \multicolumn{1}{c|}{4431} & 7210  \\ \hline
\end{tabular}
\caption{Comparing runtime (in seconds) with CTA vs. GMRES for Dorr tridiagonal matrices for various dimensions $n$.}
\label{tri_do2}
\end{table}

\begin{table}[!htb]
\centering
\begin{tabular}{|ccccc|}
\hline
\multicolumn{5}{|c|}{Quality of Solutions for Dorr Matrices}                                                                                                           \\ \hline
\multicolumn{1}{|c|}{$n$}      & \multicolumn{1}{c|}{500}                           & \multicolumn{1}{c|}{1000}                          & \multicolumn{1}{c|}{5000}                          & 10000                         \\ \hline
\multicolumn{1}{|c|}{CTA}   & \multicolumn{1}{c|}{\num{1.1e-15}} & \multicolumn{1}{c|}{\num{2.1e-15}} & \multicolumn{1}{c|}{\num{9.7e-15}} & \num{9.5e-14} \\ \hline
\multicolumn{1}{|c|}{GMRES}    & \multicolumn{1}{c|}{\num{1.3e-15}} & \multicolumn{1}{c|}{\num{4.8e-15}} & \multicolumn{1}{c|}{\num{1.5e-14}} & \num{1.5e-13} \\ \hline
\end{tabular}
\caption{Comparing quality of solutions with CTA vs. GMRES for Dorr tridiagonal matrices for various dimensions $n$.}
\label{tri_do3}
\end{table}

As with the Clement tridiagonal matrices, Tables \ref{tri_do1}, \ref{tri_do2}, and \ref{tri_do3} show strong favorability for CTA over GMRES in the case of Dorr generated matrices, especially for matrices with large dimension. For small dimensions $n \leq 500$, CTA is at least 31\% faster than GMRES. For large dimensions $n = 10000$, CTA is 44\% faster than GMRES. Similar to the case for Clement tridiagonal matrices, the quality of solutions for GMRES decreases to an order of $10^{-13}$ for dimension 10000.

\subsubsection{Matrices Generated by MVMMCD: Computational Fluid Dynamics} \label{SecTri_3}
In this section we generated test matrices using the generator MVMMCD for the following constant-coefficient convection diffusion equation:
\[
-\triangle u + 2p_1u_x + 2p_2u_y - p_3u = f \text{ in } \Omega, \quad u = g \text{ on } \Gamma,
\]
where $p_1$, $p_2$, and $p_3$ are positive constants and $\Omega = [0, 1] \times [0, 1]$ is the unit square with $\Gamma = \partial\Omega$ as the boundary of $\Omega$. Following a solution scheme similar to solving the Poisson equation in the previous section, we solve this equation by discretization using the finite difference methods with a 5-point stencil on an $n \times n$ grid yields the linear system $Au = b$, where $A$ is an $n^2 \times n^2$ block tridiagonal matrix and $u, b \in \mathbb{R}^{n^2}$. More details can be found in the NEP collection \cite{mvmnep}.

\begin{table}[!htb]
\centering
\begin{tabular}{|cccc|}
\hline
\multicolumn{4}{|c|}{Runtime (in sec) for MVMMCD}                                       \\ \hline
\multicolumn{1}{|c|}{$n^2$}       & \multicolumn{1}{c|}{100} & \multicolumn{1}{c|}{1600} & 4900  \\ \hline
\multicolumn{1}{|c|}{CTA}    & \multicolumn{1}{c|}{1.2} & \multicolumn{1}{c|}{4.57} & 47.11 \\ \hline
\multicolumn{1}{|c|}{GMRES}  & \multicolumn{1}{c|}{1.8} & \multicolumn{1}{c|}{5.28} & 48.27 \\ \hline
\end{tabular}
\caption{Comparing runtime (in seconds) with CTA vs. GMRES for matrices generated using MVMMCD for various dimensions $n$.}
\label{tri_fd1}
\end{table}

\begin{table}[!htb]
\centering
\begin{tabular}{|cccc|}
\hline
\multicolumn{4}{|c|}{Number of Iterations for MVMMCD}                                      \\ \hline
\multicolumn{1}{|c|}{$n^2$}       & \multicolumn{1}{c|}{100} & \multicolumn{1}{c|}{1600} & 4900 \\ \hline
\multicolumn{1}{|c|}{CTA}    & \multicolumn{1}{c|}{201} & \multicolumn{1}{c|}{831}  & 4102 \\ \hline
\multicolumn{1}{|c|}{GMRES}  & \multicolumn{1}{c|}{224} & \multicolumn{1}{c|}{969}  & 4839 \\ \hline
\end{tabular}
\caption{Comparing number of iterations with CTA vs. GMRES for matrices generated using MVMMCD for various dimensions $n$.}
\label{tri_fd2}
\end{table}

\begin{table}[!htb]
\centering
\begin{tabular}{|cccc|}
\hline
\multicolumn{4}{|c|}{Quality of Solutions for MVMMCD}                                                \\ \hline
\multicolumn{1}{|c|}{$n^2$}       & \multicolumn{1}{c|}{100}                           & \multicolumn{1}{c|}{1600}                          & 4900                          \\ \hline
\multicolumn{1}{|c|}{CTA}    & \multicolumn{1}{c|}{\num{1.8e-8}}                            & \multicolumn{1}{c|}{\num{1.7e-6}}                            & \num{3.4e-6}                            \\ \hline
\multicolumn{1}{|c|}{GMRES}  & \multicolumn{1}{c|}{NC} & \multicolumn{1}{c|}{NC} & NC \\ \hline
\end{tabular}
\caption{Comparing quality of solutions with CTA vs. GMRES for matrices generated using MVMMCD for various dimensions $n$. Here, "NC" stands for not converging.}
\label{tri_fd3}
\end{table}

Tables \ref{tri_fd1}, \ref{tri_fd2}, and \ref{tri_fd3} show strong favorability for CTA over GMRES for matrices generated by this Computational Fluid Dynamics problem. In particular, GMRES does not converge even for generated matrices with small dimension, whereas CTA can give solutions for precision up to $10^{-6}$ faster than GMRES can report non-convergence. For real world applications such as Computational Fluid Dynamics, it is imperative that a linear system solver provides a good solution in reasonable time. We have shown that CTA is successful in achieving this goal, while GMRES fails to do so, making it ineffective in this situation.

\subsubsection{Lotkin Matrices}  \label{SecRan_2}
Matrices generated by the Lotkin generator are non-symmetric, ill-conditioned matrices with small, negative eigenvalues. More details on these matrices can also be found in Lotkin \cite{lotkin}.

\begin{table}[!htb]
\centering
\begin{tabular}{|ccccc|}
\hline
\multicolumn{5}{|l|}{Runtime (in seconds) for Lotkin Matrices}                                   \\ \hline
\multicolumn{1}{|c|}{$n$}      & \multicolumn{1}{c|}{500}  & \multicolumn{1}{c|}{1000} & \multicolumn{1}{c|}{5000}  & 10000 \\ \hline
\multicolumn{1}{|c|}{CTA}   & \multicolumn{1}{c|}{1.27} & \multicolumn{1}{c|}{4.86} & \multicolumn{1}{c|}{49.48}  & 68.69 \\ \hline
\multicolumn{1}{|c|}{GMRES}  & \multicolumn{1}{c|}{1.34} & \multicolumn{1}{c|}{4.89} & \multicolumn{1}{c|}{48.97} & 69.52 \\ \hline
\end{tabular}
\caption{Comparing runtime (in seconds) with CTA vs. GMRES for Lotkin matrices for various dimensions $n$.}
\label{ran_lo1}
\end{table}

\begin{table}[!htb]
\centering
\begin{tabular}{|ccccc|}
\hline
\multicolumn{5}{|l|}{Number of Iterations for Lotkin Matrices}                                                             \\ \hline
\multicolumn{1}{|c|}{$n$}      & \multicolumn{1}{c|}{500} & \multicolumn{1}{c|}{1000} & \multicolumn{1}{c|}{5000} & 10000 \\ \hline
\multicolumn{1}{|c|}{CTA}  & \multicolumn{1}{c|}{237} & \multicolumn{1}{c|}{828}  & \multicolumn{1}{c|}{4328} & 7130  \\ \hline
\multicolumn{1}{|c|}{GMRES}    & \multicolumn{1}{c|}{240} & \multicolumn{1}{c|}{937}  & \multicolumn{1}{c|}{4495} & 7101  \\ \hline
\end{tabular}
\caption{Comparing runtime (in seconds) with CTA vs. GMRES for Lotkin matrices for various dimensions $n$.}
\label{ran_lo2}
\end{table}

\begin{table}[!htb]
\centering
\begin{tabular}{|ccccc|}
\hline
\multicolumn{5}{|c|}{Quality of Solutions for Lotkin Matrices}                                                                                                           \\ \hline
\multicolumn{1}{|c|}{$n$}      & \multicolumn{1}{c|}{500}                           & \multicolumn{1}{c|}{1000}                          & \multicolumn{1}{c|}{5000}                          & 10000                         \\ \hline
\multicolumn{1}{|c|}{CTA}    & \multicolumn{1}{c|}{\num{5.8e-8}} & \multicolumn{1}{c|}{\num{4.5e-7}} & \multicolumn{1}{c|}{\num{1.2e-6}} & \num{1.1e-6} \\ \hline
\multicolumn{1}{|c|}{GMRES}   & \multicolumn{1}{c|}{\num{5.9e-8}} & \multicolumn{1}{c|}{\num{3.2e-7}} & \multicolumn{1}{c|}{\num{1.0e-6}} & \num{1.2e-6} \\ \hline
\end{tabular}
\caption{Comparing quality of solutions with CTA vs. GMRES for Lotkin matrices for various dimensions $n$.}
\label{ran_lo3}
\end{table}

As noted by Tables \ref{ran_lo1}, \ref{ran_lo2}, and \ref{ran_lo3}, the results show strong favorability for CTA over GMRES in the case of Lotkin generated matrices. For small dimensions $n \leq 500$, CTA is at least 5\% faster than GMRES. For large dimensions $n = 10000$, CTA is about 1\% faster than GMRES. The quality of solutions for GMRES decreases to an order of $10^{-6}$ for dimension 10000.

\subsubsection{Ordinary Differential Equations}
Consider the following second order ODE:
\begin{align} \label{ODEForm1}
y'' = -\mu^2y,
\end{align}
with boundary conditions
\[
y(0) = 0 \text{ and } y'(0) + \gamma y'(1) = 0, \quad 0 < \gamma < 1.
\]
The eigenproblem of (\ref{ODEForm1}) can be approximated by finite differences by letting $y_i$ be the approximate solutions at $x_i = i / (n + 1)$ for $0 \leq i \leq n$. Then, this problem can be recast as follows:
\[
Ay = -\mu^2By, 
\]
where $B = h^2\text{diag}(1,1,\cdots,1,0)$, and $h$ is a parameter. For these computational results, we use the matrices $M = A^{-1}B$ and solve $Mx = b$, where $b$ is the row-sum vector of $M$. More details can be found in the NEP collection \cite{mvmnep}.
\begin{table}[!htb]
\centering
\begin{tabular}{|cccc|}
\hline
\multicolumn{4}{|c|}{Runtime (in sec) for MVMODE}                                       \\ \hline
\multicolumn{1}{|c|}{$n$}       & \multicolumn{1}{c|}{100} & \multicolumn{1}{c|}{1000} & 5000  \\ \hline
\multicolumn{1}{|c|}{CTA}    & \multicolumn{1}{c|}{1.3} & \multicolumn{1}{c|}{4.48} & 46.29 \\ \hline
\multicolumn{1}{|c|}{GMRES}  & \multicolumn{1}{c|}{1.8} & \multicolumn{1}{c|}{5.32} & 49.19 \\ \hline
\end{tabular}
\caption{Comparing runtime (in seconds) with CTA vs. GMRES for matrices generated using MVMODE for various dimensions $n$.}
\label{ran_od1}
\end{table}

\begin{table}[!htb]
\centering
\begin{tabular}{|cccc|}
\hline
\multicolumn{4}{|c|}{Number of Iterations for MVMODE}                                      \\ \hline
\multicolumn{1}{|c|}{$n$}       & \multicolumn{1}{c|}{100} & \multicolumn{1}{c|}{1000} & 5000 \\ \hline
\multicolumn{1}{|c|}{CTA}    & \multicolumn{1}{c|}{203} & \multicolumn{1}{c|}{97}  & 4012 \\ \hline
\multicolumn{1}{|c|}{GMRES}  & \multicolumn{1}{c|}{219} & \multicolumn{1}{c|}{831}  & 4928 \\ \hline
\end{tabular}
\caption{Comparing number of iterations with CTA vs. GMRES for matrices generated using MVMODE for various dimensions $n$.}
\label{ran_od2}
\end{table}

\begin{table}[!htb]
\centering
\begin{tabular}{|cccc|}
\hline
\multicolumn{4}{|c|}{Quality of Solutions for MVMODE}                                                \\ \hline
\multicolumn{1}{|c|}{$n$}       & \multicolumn{1}{c|}{100}                           & \multicolumn{1}{c|}{1000}                          & 5000                          \\ \hline
\multicolumn{1}{|c|}{CTA}    & \multicolumn{1}{c|}{\num{1.8e-8}}                            & \multicolumn{1}{c|}{\num{1.6e-6}}                            & \num{3.2e-6}                            \\ \hline
\multicolumn{1}{|c|}{GMRES}  & \multicolumn{1}{c|}{NC} & \multicolumn{1}{c|}{NC} & NC \\ \hline
\end{tabular}
\caption{Comparing quality of solutions with CTA vs. GMRES for matrices generated using MVMODE for various dimensions $n$. Here, "NC" stands for not converging.}
\label{ran_od3}
\end{table}

As in the Computational Fluid Dynamics section, Tables \ref{ran_od1}, \ref{ran_od2}, and \ref{ran_od3} show strong favorability for CTA over GMRES for matrices generated by this problem regarding ODEs. In particular, GMRES does not converge even for generated matrices with small dimension, whereas CTA can give solutions for precision up to $10^{-6}$ faster than GMRES can report non-convergence. Even though the matrices are ill-conditioned, we have shown that CTA is successful in providing a good solution even for problems in ODEs while GMRES fails to do so. 

\subsubsection{Results for MatrixMarket \& SuiteSparse Matrices}
In this section, we present computational results regarding a data of 3629 general real world matrices taken from two sources: the MatrixMarket and SuiteSparse collections \cite{matrixmarket, suitesparse}. When solving for $Ax=b$, GMRES does not converge to a solution for many matrices in this collection. Therefore, we split this section into two parts: 1) when GMRES converges,  and 2) when GMRES does not converge.

\begin{itemize}
    \item {\bf Case 1: GMRES: Non-convergent Cases}
    
    Out of the 3629 matrices, there are 1468 of them for which GMRES does not converge. So, about 40.5\% of the time, GMRES does not converge. In this case, CTA runs approximately 8\% faster than GMRES on average, and uses 3\% less iterations. The relative residual for CTA is approximately $10^{-6}$ on average.
    
    \item {\bf Case 2: GMRES: Convergent Cases}
    
    Out of the 3629 matrices, there are 2161 of them for which GMRES does converge. In this case, CTA runs approximately 2\% slower than GMRES on average, and uses 3\% less iterations. The relative residual for CTA and GMRES is approximately $10^{-6}$ on average, with an average difference of $\num{1.3e-7}$.
\end{itemize}

\subsection{Comparison of Performance between CTA and GMRES when Less Precision is Necessary}
In some cases, practitioners who want a solution of reasonable quality might not need precision up to $10^{-15}$. We show comparison of CTA and GMRES for precisions $10^{-15}$, $10^{-10}$, and $10^{-5}$. These computational results also favor CTA.

\subsubsection{Comparison of Performance for PSD Matrices}
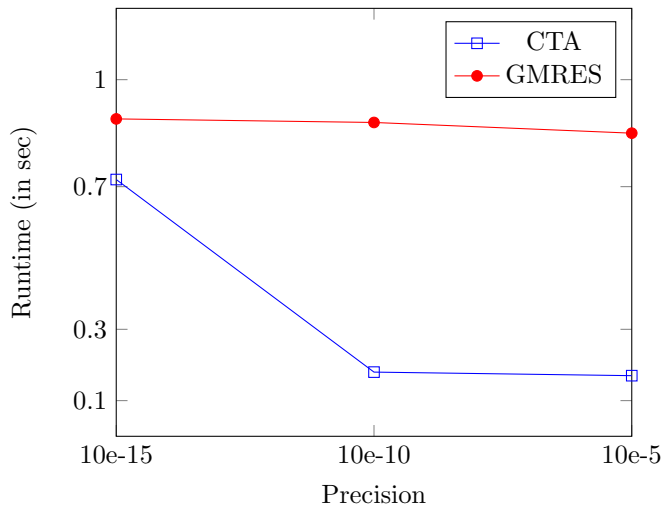
\begin{figure}[!htb]
\centering
\begin{tikzpicture}
\begin{axis}[
    title={Runtimes in Different Precisions for PSD Matrices with $n=1000$},
    xlabel={Precision},
    ylabel={Runtime (in sec)},
    xmin=1, xmax=3,
    ymin=0, ymax=1.2,
    xtick={1,2,3},
    xticklabels={10e-15, 10e-10, 10e-5},
    ytick={.1,.3,.7,1},
    legend pos=north east,
]

\addplot[
    color=blue,
    mark=square,
    ]
    coordinates {
    (1,.72)(2,.18)(3,.17)
    };
    \addlegendentry{CTA}

\addplot[
    color=red,
    mark=*,
    ]
    coordinates {
    (1,.89)(2,.88)(3,.85)
    };
    \addlegendentry{GMRES}
    
\end{axis}
\end{tikzpicture}
\caption{Comparison of runtimes between CTA and GMRES for full-rank PSD matrices with dimension $n = 1000$. The x-axis gives precisions, and y-axis give runtimes in seconds.}
\label{fig_psd1}
\end{figure}

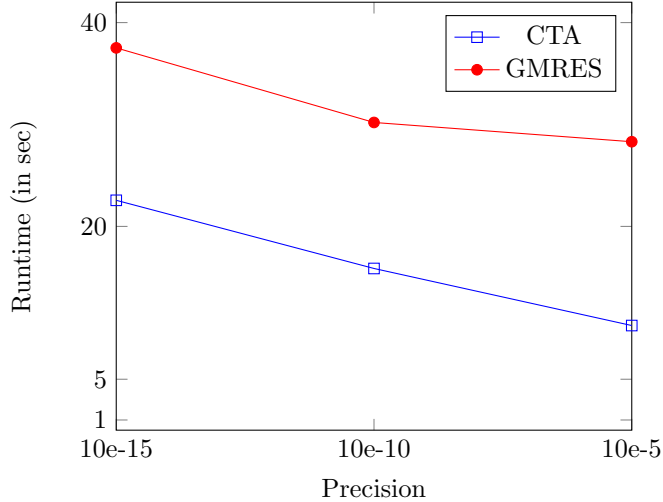
\begin{figure}[!htb]
\centering
\begin{tikzpicture}
\begin{axis}[
    title={Runtimes in Different Precisions for PSD Matrices with $n=10000$},
    xlabel={Precision},
    ylabel={Runtime (in sec)},
    xmin=1, xmax=3,
    ymin=0, ymax=42,
    xtick={1,2,3},
    xticklabels={10e-15, 10e-10, 10e-5},
    ytick={1,5,20,40},
    legend pos=north east,
]

\addplot[
    color=blue,
    mark=square,
    ]
    coordinates {
    (1,22.56)(2,15.87)(3,10.27)
    };
    \addlegendentry{CTA}

\addplot[
    color=red,
    mark=*,
    ]
    coordinates {
    (1,37.52)(2,30.20)(3,28.31)
    };
    \addlegendentry{GMRES}
    
\end{axis}
\end{tikzpicture}
\caption{Comparison of runtimes between CTA and GMRES for full-rank PSD matrices with dimension $n = 10000$. The x-axis gives precisions, and y-axis give runtimes in seconds.}
\label{fig_psd2}
\end{figure}

As shown by Figures \ref{fig_psd1} and \ref{fig_psd2}, the case for PSD matrices highlights the difference in runtimes between CTA and GMRES. In the case where less precision is needed, such as $10^{-10}$, CTA achieves an 80\% speed-up in runtime for dimension $n = 1000$. In the case where the matrix is larger, e.g. $n = 10000$, CTA achieves a 47\% speed-up in runtime. 

\subsubsection{Comparison of Performance for Clement Tridiagonal Matrices}
\begin{figure}[!htb]
\centering
\begin{tikzpicture}
\begin{axis}[
    title={Runtimes in Different Precisions for Clement Matrices with $n=1000$},
    xlabel={Precision},
    ylabel={Runtime (in sec)},
    xmin=1, xmax=3,
    ymin=0, ymax=1.2,
    xtick={1,2,3},
    xticklabels={10e-15, 10e-10, 10e-5},
    ytick={.1,.3,.7,1},
    legend pos=north east,
]

\addplot[
    color=blue,
    mark=square,
    ]
    coordinates {
    (1,.69)(2,.37)(3,.33)
    };
    \addlegendentry{CTA}

\addplot[
    color=red,
    mark=*,
    ]
    coordinates {
    (1,.85)(2,.79)(3,.70)
    };
    \addlegendentry{GMRES}
    
\end{axis}
\end{tikzpicture}
\caption{Comparison of runtimes between CTA and GMRES for Clement tridiagonal matrices with dimension $n = 1000$. The x-axis gives precisions, and y-axis give runtimes in seconds.}
\label{fig_cle1}
\end{figure}
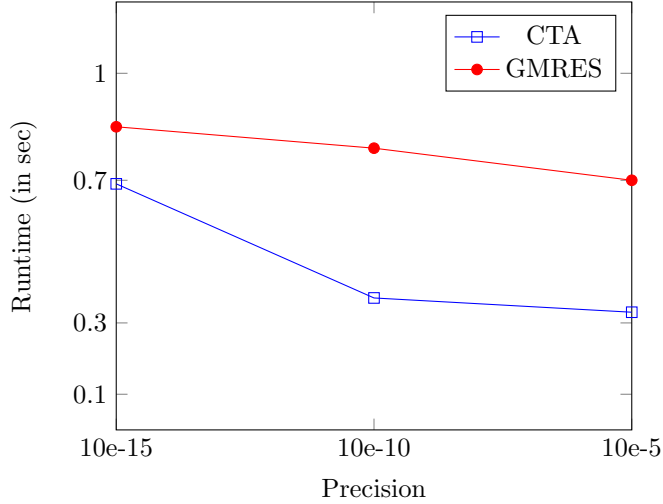

\begin{figure}[!htb]
\centering
\begin{tikzpicture}
\begin{axis}[
    title={Runtimes in Different Precisions for Clement Matrices with $n=10000$},
    xlabel={Precision},
    ylabel={Runtime (in sec)},
    xmin=1, xmax=3,
    ymin=0, ymax=50,
    xtick={1,2,3},
    xticklabels={10e-15, 10e-10, 10e-5},
    ytick={1,5,20,40},
    legend pos=north east,
]

\addplot[
    color=blue,
    mark=square,
    ]
    coordinates {
    (1,21.63)(2,15.32)(3,10.65)
    };
    \addlegendentry{CTA}

\addplot[
    color=red,
    mark=*,
    ]
    coordinates {
    (1,38.39)(2,36.92)(3,35.17)
    };
    \addlegendentry{GMRES}
    
\end{axis}
\end{tikzpicture}
\caption{Comparison of runtimes between CTA and GMRES for Clement tridiagonal matrices with dimension $n = 10000$. The x-axis gives precisions, and y-axis give runtimes in seconds.}
\label{fig_cle2}
\end{figure}

As shown by Figures \ref{fig_cle1} and \ref{fig_cle2}, the case for Clement tridiagonal matrices is similar to the case of the PSD matrices in regards to the difference in runtimes between CTA and GMRES. In the case where less precision is needed, such as $10^{-10}$, CTA achieves a 53\% speed-up in runtime for dimension $n = 1000$. In the case where the matrix is larger, e.g. $n = 10000$, CTA achieves a 59\% speed-up in runtime. 

\subsection{Computational Results for Rectangular Matrices} \label{SecRecMat}
In this section, we present computational results for rectangular matrices, i.e. $A$ is $m \times n$ where $m \not= n$, showing our hybrid CTA/TA algorithm outperforms ``lsqminnorm", the current state-of-the-art algorithm for finding minimum-norm least squares solution. The 138 matrices we used for this section are taken from the dataset ``lpnetlib" in the SuiteSparse collection \cite{suitesparse}. The matrix dimensions range from as small as $70 \times 100$ to as large as $10000 \times 10000000$. Due to the range in sizes, we split the dataset into three classes in terms of the number of rows $m$: $100$ (with $n$ ranging from $200$ to $1000$), $1000$ (with $n$ ranging from $2000$ to $10000$), and $10000$ (with $n$ ranging from $20000$ to $1000000$.

In our experiments, we solve $Ax = b$ where $A$ is $m \leq n$, where the system can be feasible or infeasible, and $b$ is the row-sum vector of $A$, therefore $x = [1, \cdots, 1]^T$ is a solution. We compare our own hybrid implementations of CTA and TA algorithm to Matlab's implementation ``lsqminnorm", the state-of-the-art algorithm for solving rectangular systems. More details of Matlab's implementation of ``lsqminnorm" can be found in the documentation for Matlab. Our two hybrid implementations of CTA/TA first runs CTA as in Section \ref{SecRecMat}. The first hybrid algorithm runs CTA with the parameter $\varepsilon = 10^{-8}$ and the second hybrid algorithm with parameter $\varepsilon = 10^{-15}$. Then, both hybrid algorithms runs TA, as described in Algorithm with parameter $\varepsilon = 10^{-15}$. As in the case of Section \ref{SecRecMat}, we terminate our hybrid algorithms early once the quality of solutions of the hybrid algorithms are better than the solution given by ``lsqminnorm". When $A$ is large and sparse, as in the case of square matrices, sparse matrix computation is used. If $x_\varepsilon$ is the $\varepsilon$-approximate solution to $Ax = b$, the inputs for TA is simply $A$ and $b$. Otherwise, the inputs for TA, are $A^TA$ and $A^Tb$, used implicitly via matrix-vector multiplications. 

\subsubsection{Feasible Rectangular Systems}
In this section, we compare our hybrid CTA/TA algorithms against ``lsqminnorm" both in runtimes and quality of solutions for feasible rectangular systems (FRS).

\begin{table}[!htb]
\centering
\begin{tabular}{|cccc|}
\hline
\multicolumn{4}{|c|}{Average Runtime (in seconds) for FRS}                                   \\ \hline
\multicolumn{1}{|c|}{$n$}      & \multicolumn{1}{c|}{100}  & \multicolumn{1}{c|}{1000} & 10000 \\ \hline
\multicolumn{1}{|c|}{$m$}      & \multicolumn{1}{c|}{200 - 1000}  & \multicolumn{1}{c|}{2000 - 10000} & 20000 - 1000000 \\ \hline
\multicolumn{1}{|c|}{CTA/TA ($\varepsilon = 10^{-8}$)}  & \multicolumn{1}{c|}{1.2} & \multicolumn{1}{c|}{40.17} & 421.28 \\ \hline
\multicolumn{1}{|c|}{CTA/TA ($\varepsilon = 10^{-15}$)}  & \multicolumn{1}{c|}{1.3} & \multicolumn{1}{c|}{42.29} & 430.35 \\ \hline
\multicolumn{1}{|c|}{lsqminnorm} & \multicolumn{1}{c|}{1.87} & \multicolumn{1}{c|}{62.78} & 612.99 \\ \hline
\end{tabular}
\caption{Comparing average runtimes (in seconds) with our hybrid algorithms vs. lsqminnorm for feasible rectangular systems for various dimensions $n$.}
\label{feas_rec_times}
\end{table}

\begin{table}[!htb]
\centering
\begin{tabular}{|cccc|}
\hline
\multicolumn{4}{|c|}{Quality of Solutions for FRS}                                   \\ \hline
\multicolumn{1}{|c|}{$n$}      & \multicolumn{1}{c|}{100}  & \multicolumn{1}{c|}{1000} & 10000 \\ \hline
\multicolumn{1}{|c|}{$m$}      & \multicolumn{1}{c|}{200 - 1000}  & \multicolumn{1}{c|}{2000 - 10000} & 20000 - 1000000 \\ \hline
\multicolumn{1}{|c|}{CTA/TA ($\varepsilon = 10^{-8}$)}  & \multicolumn{1}{c|}{9.9e-16} & \multicolumn{1}{c|}{9.9e-16} & 9.9e-16 \\ \hline
\multicolumn{1}{|c|}{CTA/TA ($\varepsilon = 10^{-15}$)}  & \multicolumn{1}{c|}{9.9e-16} & \multicolumn{1}{c|}{9.9e-16} & 1.0e-15 \\ \hline
\multicolumn{1}{|c|}{lsqminnorm} & \multicolumn{1}{c|}{1.1e-15} & \multicolumn{1}{c|}{1.2e-15} & 1.2e-15 \\ \hline
\end{tabular}
\caption{Comparing quality of solutions with our hybrid CTA/TA algorithms vs. lsqminnorm for feasible rectangular systems for various dimensions $n$.}
\label{feas_rec_qual}
\end{table}

As shown in Table \ref{feas_rec_times}, and Table \ref{feas_rec_qual}, our hybrid CTA/TA algorithms outperforms ``lsqminnorm" both in speed and quality of solutions. In the case where $n$ is around $10000$, our hybrid CTA/TA algorithms runs about 45.5\% faster than ``lsqminnorm". 

\subsubsection{Infeasible Rectangular Systems}
In this section, we compare our hybrid CTA/TA algorithms against ``lsqminnorm" both in runtimes and quality of solutions for infeasible rectangular systems (IFRS).

\begin{table}[!htb]
\centering
\begin{tabular}{|cccc|}
\hline
\multicolumn{4}{|c|}{Average Runtime (in seconds) for IFRS}                                   \\ \hline
\multicolumn{1}{|c|}{$n$}      & \multicolumn{1}{c|}{100}  & \multicolumn{1}{c|}{1000} & 10000 \\ \hline
\multicolumn{1}{|c|}{$m$}      & \multicolumn{1}{c|}{200 - 1000}  & \multicolumn{1}{c|}{2000 - 10000} & 20000 - 1000000 \\ \hline
\multicolumn{1}{|c|}{CTA/TA ($\varepsilon = 10^{-8}$)}  & \multicolumn{1}{c|}{1.8} & \multicolumn{1}{c|}{52.20} & 481.58 \\ \hline
\multicolumn{1}{|c|}{CTA/TA ($\varepsilon = 10^{-15}$)}  & \multicolumn{1}{c|}{1.9} & \multicolumn{1}{c|}{54.64} & 492.21 \\ \hline
\multicolumn{1}{|c|}{lsqminnorm} & \multicolumn{1}{c|}{2.73} & \multicolumn{1}{c|}{79.25} & 703.34 \\ \hline
\end{tabular}
\caption{Comparing average runtimes (in seconds) with our hybrid algorithms vs. lsqminnorm for feasible rectangular systems for various dimensions $n$.}
\label{ifeas_rec_times}
\end{table}

\begin{table}[!htb]
\centering
\begin{tabular}{|cccc|}
\hline
\multicolumn{4}{|c|}{Quality of Solutions for IFRS}                                   \\ \hline
\multicolumn{1}{|c|}{$n$}      & \multicolumn{1}{c|}{100}  & \multicolumn{1}{c|}{1000} & 10000 \\ \hline
\multicolumn{1}{|c|}{$m$}      & \multicolumn{1}{c|}{200 - 1000}  & \multicolumn{1}{c|}{2000 - 10000} & 20000 - 1000000 \\ \hline
\multicolumn{1}{|c|}{CTA/TA ($\varepsilon = 10^{-8}$)}  & \multicolumn{1}{c|}{9.8e-16} & \multicolumn{1}{c|}{9.8e-16} & 9.9e-16 \\ \hline
\multicolumn{1}{|c|}{CTA/TA ($\varepsilon = 10^{-15}$)}  & \multicolumn{1}{c|}{9.8e-16} & \multicolumn{1}{c|}{9.8e-16} & 1.0e-15 \\ \hline
\multicolumn{1}{|c|}{lsqminnorm} & \multicolumn{1}{c|}{1.2e-15} & \multicolumn{1}{c|}{1.2e-15} & 1.2e-15 \\ \hline
\end{tabular}
\caption{Comparing quality of solutions with our hybrid CTA/TA algorithms vs. lsqminnorm for feasible rectangular systems for various dimensions $n$.}
\label{ifeas_rec_qual}
\end{table}

As shown in Table \ref{ifeas_rec_times}, and Table \ref{ifeas_rec_qual}, our hybrid CTA/TA algorithms outperforms ``lsqminnorm" both in speed and quality of solutions. In the case where $n$ is around $10000$, our hybrid CTA/TA algorithms runs about 45.5\% faster than ``lsqminnorm". 

\section{Extension of TA to Linear Programming Feasibility} \label{sec6}

Linear programming (LP) feasibility problem is testing the solvability of a linear system $Ax=b$ when the nonnegativity constraints $x \geq 0$ are also added. Specifically, we wish to test the feasibility of the set,
\begin{equation}  \label{vrend1}
\Omega =\{x: Ax=b, \quad x \geq 0\}.
\end{equation}
Thus in  LP feasibility we seek a special solution of the linear system that lies in the nonnegativity cone. Theoretically speaking, while the {\it fully polynomial-time} solvability of testing if a linear system is accomplished via Gaussian elimination, the {\it polynomial-time} solvability of LP was first established by Khachiyan \cite{Khach80}. While linear system can be solved in time polynomial in the dimension of the problem, LP solvability depends polynomially in the dimension of the problem and logarithmically  in the size of the input data. In practice, both problems are significant and closely related.

Just as we have the linear system $Ax=b$ and the corresponding normal equation $A^TAx=A^Tb$, in linear programming we can consider solving $Ax=b$, $x \geq 0$ and a corresponding normal version of that, $A^TAx=A^Tb$, $x \geq 0$. The later problem is not considered or spoken of in linear programming literature. However, since we will be attempting to solve LP feasibility via the Triangle Algorithm we will need 
to consider both problems. Thus we first state a simple relationship between the two. 

\begin{prop} Suppose $Ax=b, x\geq 0$ is solvable.  If $\overline x$ is a solution to $A^TAx=A^Tb, x \geq 0$, then $\overline x$ is also a solution to $Ax=b$.
\end{prop}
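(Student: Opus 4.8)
The plan is to exploit the standard orthogonality characterization of the normal equation: any solution $\overline x$ of $A^TAx = A^Tb$ forces the residual $A\overline x - b$ to be orthogonal to the range of $A$, and solvability of $Ax=b$ places $b$ (and hence that residual) inside the range of $A$ as well. Since a vector lying simultaneously in a subspace and in its orthogonal complement must vanish, this yields $A\overline x = b$.

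First I would rewrite the hypothesis $A^TA\overline x = A^Tb$ as $A^T(A\overline x - b) = 0$, so that the residual $r = A\overline x - b$ satisfies $r \in \ker(A^T)$. Invoking the elementary identity $\ker(A^T) = (\mathrm{range}(A))^{\perp}$, this says precisely $r \perp \mathrm{range}(A)$. Next I would use the other hypothesis: since $Ax=b,\ x\ge 0$ is solvable, there is some $x_0 \ge 0$ with $Ax_0 = b$, so in particular $b \in \mathrm{range}(A)$. Because $A\overline x \in \mathrm{range}(A)$ as well, the residual $r = A\overline x - b$ is a difference of two elements of $\mathrm{range}(A)$, hence $r \in \mathrm{range}(A)$. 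Combining the two observations, $r$ lies in $\mathrm{range}(A) \cap (\mathrm{range}(A))^{\perp} = \{0\}$, so $r = 0$ and therefore $A\overline x = b$.

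There is no genuinely hard step here; the only point requiring care is bookkeeping of which hypotheses are actually used. The nonnegativity constraint $\overline x \ge 0$ plays no role in deriving $A\overline x = b$ — it is carried along only because it appears in the statement, the upshot being that $\overline x$ then solves the full feasibility system $Ax=b,\ x\ge 0$. Likewise, solvability of $Ax=b$ is needed only through its weaker consequence $b \in \mathrm{range}(A)$. If one prefers to avoid naming $(\mathrm{range}(A))^{\perp}$ explicitly, the same conclusion follows by a one-line computation: writing $b = Ax_0$ and using $A^T(A\overline x - b)=0$, we get $\Vert A\overline x - b \Vert^2 = (A\overline x - b)^T A(\overline x - x_0) = (\overline x - x_0)^T A^T(A\overline x - b) = 0$, which again forces $A\overline x = b$.
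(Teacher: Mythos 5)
Your proof is correct, but it follows a genuinely different route from the paper's. The paper argues via optimization: by the Lagrange multiplier (KKT) conditions for the convex program $\min\{\Vert Ax-b\Vert : x \geq 0\}$, any $\overline x \geq 0$ satisfying $A^T(A\overline x - b)=0$ is a global minimizer of that program; since $Ax=b$, $x\geq 0$ is solvable, the minimum value is zero, forcing $A\overline x = b$. You instead argue by pure linear algebra: the residual $r = A\overline x - b$ lies in $\ker(A^T) = (\mathrm{range}(A))^{\perp}$ by the normal equation and in $\mathrm{range}(A)$ by consistency of $Ax=b$, hence $r=0$ (equivalently, your one-line computation $\Vert r \Vert^2 = (\overline x - x_0)^T A^T r = 0$). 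Your approach is more elementary --- no optimality theory is needed --- and it makes the logical dependencies sharper: as you observe, neither the nonnegativity of $\overline x$ nor that of the solution to $Ax=b$ is actually used, so you in fact prove the stronger statement that \emph{every} solution of the normal equation solves $Ax=b$ whenever $Ax=b$ is consistent. What the paper's KKT framing buys is context: it exhibits solutions of $A^TAx=A^Tb$, $x\geq 0$ as nonnegative least-squares minimizers, which is the viewpoint underlying the surrounding discussion of LP feasibility (for instance, the remark about systems where $Ax=b$, $x\geq 0$ is infeasible while the nonnegativity-constrained normal equation remains feasible).
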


\begin{proof} From Lagrange multiplier optimality condition it follows that 
$\overline x$ must also be a solution to $\min \{|Ax - b |:  x \geq 0\}$. 
\end{proof}

\begin{remark}  There are cases when $Ax=b, x \geq 0$ is infeasible but $A^TAx=A^Tb, x \geq 0$ is feasible. To give an example, take an case where $Ax=b$ is infeasible but $A^TAx=nA^Tb$ is feasible, say $x'$ is a solution. Now write $x'=x_1-x_2$ where $x_i \geq 0$, $i=1,2$. Now we have
$A^TAx_1=A^TAx_2+A^Tb=A^T(Ax_2+b)$.  Now call $b'=Ax_2+b$. Then $Ax=b'$ is unsolvable but $A^TAx=A^Tb'$ is feasible.
However, while the normal equation is always solvable, when nonnegativity constraints are added it may be the case that both $Ax=b, x \geq 0$ and $A^TAx=A^Tb, x \geq 0$ are infeasible.
\end{remark}.

Consider applying the version of TA for solving $Ax=b$ or $A^TAx=A^Tb$ approximately in Algorithm \ref{2.2}.  We consider extension of this to the case where nonnegativity constraint $x \geq 0$ is also added.  There are two issues to consider.  First, since by adding $x \geq 0$ there may be no solution to $A^TAx=A^Tb, x \geq 0$,  there is no stopping rule. This problem can be circumvented by imposing a termination bound. The second issue that need to be addressed is the following important algorithmic aspect of the Triangle Algorithm:
Optimization of a linear function over an ellipsoid, $E_{A,\rho}=\{Ax: \Vert x \Vert \leq \rho\}$ (the main iterative step in the algorithm), can be computed efficiently. This is proved in Theorem \ref{prop0TA}. Specifically, if $c \not =0$,
\begin{equation}  \label{vrend}
{\rm argmax}\{c^Tx : Ax \in E_{A, \rho}\}=\rho \frac{Ac}{\Vert c \Vert}.
\end{equation}
Suppose $\Omega$, if nonempty, is bounded, i.e. there exists $\rho >0$ such that for any $x \in \Omega$,  $\Vert x \Vert \leq \rho$. Consider the problem of maximizing a linear function $c^Tx$ over the intersection of the ellipsoid $E_{A, \rho}$ and the nonnegativity cone, $\{x: x \geq 0\}$. If $c_+$ is the vector that replaces the negative $c_i$'s in $c$ with zero, then it is easy to prove

\begin{prop}
\begin{equation}  \label{vrend2}
{\rm argmax}\{c_+^Tx : Ax \in E_{A, \rho}, \quad x \geq 0\}= \rho \frac{Ac_+}{\Vert c_+ \Vert}.
\end{equation}
\end{prop}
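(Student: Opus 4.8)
The plan is to reduce the constrained maximization to an elementary Cauchy--Schwarz argument, the point being that the nonnegativity constraint is automatically inactive at the optimum. Following the convention of Theorem~\ref{prop0TA} and equation~(\ref{vr}), I read the feasible region $\{x : Ax \in E_{A, \rho},\ x \geq 0\}$ as the set of $x \in \mathbb{R}^n$ with $\Vert x \Vert \leq \rho$ and $x \geq 0$, and I interpret the ${\rm argmax}$ as the image point $A\tilde x$ of the optimizer $\tilde x$, exactly as $v_\rho = \rho Ac/\Vert c \Vert$ is the image of the optimizer in~(\ref{vr}). Throughout I assume $c_+ \neq 0$, the degenerate case $c_+ = 0$ being excluded just as $c \neq 0$ is assumed in Theorem~\ref{prop0TA}.

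First I would record the defining property of $c_+$: replacing the negative entries of $c$ by zero yields a vector that is nonnegative componentwise, $c_+ \geq 0$. Next, temporarily dropping the constraint $x \geq 0$, I would maximize $c_+^T x$ over the ball $\Vert x \Vert \leq \rho$ alone. By Cauchy--Schwarz, $c_+^T x \leq \Vert c_+ \Vert\, \Vert x \Vert \leq \rho \Vert c_+ \Vert$, with equality if and only if $x$ is the nonnegative multiple of $c_+$ on the boundary sphere, namely $\tilde x = \rho\, c_+/\Vert c_+ \Vert$.

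The key step is then to observe that this ball-maximizer is already feasible for the fully constrained problem: since $c_+ \geq 0$ and $\rho > 0$, the point $\tilde x = \rho\, c_+/\Vert c_+ \Vert$ satisfies $\tilde x \geq 0$. Because the constrained feasible region $\{\Vert x \Vert \leq \rho,\ x \geq 0\}$ is a subset of the ball yet already contains the maximizer of $c_+^T x$ over the whole ball, $\tilde x$ must also be the unique maximizer over the smaller region. Applying the linear map $A$ then yields the image point $A\tilde x = \rho\, Ac_+/\Vert c_+ \Vert$, which is the asserted right-hand side.

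I do not anticipate a real obstacle; the only items needing care are the bookkeeping that the ${\rm argmax}$ on the left denotes the image $A\tilde x$ rather than $\tilde x$ itself, and the remark that truncating $c$ to $c_+$ is precisely what makes the optimizer land in the nonnegative cone. One may double-check this through the first-order optimality conditions for maximizing a linear cost over the ball intersected with the nonnegative orthant, which force $x_i = 0$ whenever the corresponding cost coefficient is nonpositive and make $x_i$ proportional to that coefficient otherwise; this reproduces $\tilde x = \rho\, c_+/\Vert c_+ \Vert$ and confirms that no optimal value is lost in passing from $c$ to $c_+$.
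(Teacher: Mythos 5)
Your proof is correct. The paper in fact supplies no argument for this proposition (it is introduced only with ``it is easy to prove''), and yours is exactly the argument the authors leave implicit, mirroring the unconstrained pivot computation in Theorem \ref{prop0TA}(2): Cauchy--Schwarz bounds $c_+^Tx$ over the ball by $\rho\Vert c_+\Vert$, and the key observation is that the ball-maximizer $\rho\, c_+/\Vert c_+\Vert$ is automatically nonnegative because $c_+\geq 0$, hence feasible and optimal for the constrained problem, with image $A\tilde{x}=\rho A c_+/\Vert c_+\Vert$.

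Your two interpretive choices are not merely bookkeeping but essential, so it is good that you flagged them: the ${\rm argmax}$ must denote the image point $A\tilde{x}$, in keeping with $v_\rho$ in (\ref{vr}), and the constraint must be read as $\Vert x\Vert\leq\rho$ rather than literally as $Ax\in E_{A,\rho}$. Under the literal reading the statement is actually false: unlike $c=A^T(b-b')$, the truncated vector $c_+$ need not lie in the row space of $A$, so $c_+^Tx$ is not a function of $Ax$ alone, and whenever the null space of $A$ meets the nonnegative orthant nontrivially the supremum is $+\infty$ (e.g.\ $A=(1,-1)$, $c_+=(1,0)^T$, along $x=(t,t)^T$ with $t\to\infty$). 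Your closing KKT remark is a useful supplement rather than a necessity: it is what connects the proposition as stated (objective $c_+^Tx$) to the quantity the modified Algorithm \ref{2.2} actually needs, namely $\max\{c^Tx:\Vert x\Vert\leq\rho,\ x\geq 0\}$, confirming the two problems share the same optimizer and optimal value $\rho\Vert c_+\Vert$.
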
 
This means Algorithm \ref{2.2} can be modified  to test if $\Omega$ is nonempty, simply 
by replacing $c$ in the definition of $v_\rho$ by $c_+$ as defined in the above proposition.  The same time complexity bound as that of solving $Ax=b$ applies to these modified algorithms, however need to use a termination upper bound on $\rho$.  

Extension of CTA to LP feasibility is not clear, however there are schemes that can make use of a combination of TA and CTA iterations. We ignore such combination but provide some computational results for testing LP feasibility of both feasible and infeasible cases. 

\subsection{Computational Results for LP feasibility} \label{sec6.1}

We have implemented TA for testing LP feasibility both for feasible and infeasible problems. These are reported in Tables \ref{feas_lp_times} and \ref{ifeas_lp_times}.  Note that the average runtimes of TA for LP feasibility is comparable to our hybrid algorithms for solving rectangular linear systems. These results are promising, and provide additional drive in further research towards LP feasibility.

\begin{table}[!htb]
\centering
\begin{tabular}{|cccc|}
\hline
\multicolumn{4}{|c|}{Average Runtime (in seconds) for Testing LP Feasibility in the Feasible Case}                                   \\ \hline
\multicolumn{1}{|c|}{$n$}      & \multicolumn{1}{c|}{100}  & \multicolumn{1}{c|}{1000} & 10000 \\ \hline
\multicolumn{1}{|c|}{$m$}      & \multicolumn{1}{c|}{200 - 1000}  & \multicolumn{1}{c|}{2000 - 10000} & 20000 - 1000000 \\ \hline
\multicolumn{1}{|c|}{CTA/TA ($\varepsilon = 10^{-8}$)}  & \multicolumn{1}{c|}{1.2} & \multicolumn{1}{c|}{40.17} & 421.28 \\ \hline
\multicolumn{1}{|c|}{CTA/TA ($\varepsilon = 10^{-15}$)}  & \multicolumn{1}{c|}{1.3} & \multicolumn{1}{c|}{42.29} & 430.35 \\ \hline
\multicolumn{1}{|c|}{TA with Non-negativity Constraints} & \multicolumn{1}{c|}{3.2} & \multicolumn{1}{c|}{99.75} & 878.23 \\ \hline
\end{tabular}
\caption{Comparing average runtimes (in seconds) between running our hybrid algorithms and TA with non-negativity constraints for testing LP feasibility in the feasible case.}
\label{feas_lp_times}
\end{table}

\begin{table}[!htb]
\centering
\begin{tabular}{|cccc|}
\hline
\multicolumn{4}{|c|}{Average Runtime (in seconds) for Testing LP Feasibility in the Infeasible Case}                                   \\ \hline
\multicolumn{1}{|c|}{$n$}      & \multicolumn{1}{c|}{100}  & \multicolumn{1}{c|}{1000} & 10000 \\ \hline
\multicolumn{1}{|c|}{$m$}      & \multicolumn{1}{c|}{200 - 1000}  & \multicolumn{1}{c|}{2000 - 10000} & 20000 - 1000000 \\ \hline
\multicolumn{1}{|c|}{CTA/TA ($\varepsilon = 10^{-8}$)}  & \multicolumn{1}{c|}{1.8} & \multicolumn{1}{c|}{52.20} & 481.58 \\ \hline
\multicolumn{1}{|c|}{CTA/TA ($\varepsilon = 10^{-15}$)}  & \multicolumn{1}{c|}{1.9} & \multicolumn{1}{c|}{54.64} & 492.21 \\ \hline
\multicolumn{1}{|c|}{TA with Non-negativity Constraints} & \multicolumn{1}{c|}{4.1} & \multicolumn{1}{c|}{102.96} & 910.40 \\ \hline
\end{tabular}
\caption{Comparing average runtimes (in seconds) between running our hybrid algorithms and TA with non-negativity constraints for testing LP feasibility in the infeasible case.}
\label{ifeas_lp_times}
\end{table}

\section{Concluding Remarks}
 Based on the development of TA and CTA, and their theoretical results from Kalantari \cite{kalantari23}, a summary of which was described in this paper, we  developed an implementation that is robust and easy to use. Using this implementation, we presented extensive computational results from a variety of applications while using many matrices from well-known sources, showing our implementation outperformed two state-of-the-art algorithms GMRES and ``lsqminnorm". We also extended TA for LP feasibility, thus capable of handling linear systems with non-negativity constraints. Interestingly, our computational results show that this extension is comparably fast in relation to solving general linear systems. Therefore, this approach can be integrated in solving linear programming problems, as well as integer programming problems. We remark that we could have also extended this approach and modified Algorithm \ref{2.5} to compute an approximate minimum norm LP feasible solution.  In this article, we also proved some facts on the dynamics of the residuals in the first-order CTA, proving it is essentially equivalent to the dynamics of the algorithm on the diagonal matrix of eigenvalues of $H$.  We also characterized critical regions as lines, dependent on the eigenvalues. Such results on dynamics can be extended to general members of CTA, however it is likely that characterization of corresponding critical regions is more complex.
 
 For simplicity and robustness, in our implementation of CTA we set $H = AA^T$ in Algorithm \ref{1.1}. However, when $A$ is symmetric PSD $H$ can be taken to be $A$. Specialized implementations reduce the number of operations per iteration, also improving the condition number and therefore the theoretical complexity bound. Additionally, as shown in \cite{kalantari23},  the three TA algorithms mentioned in this article each have a corresponding version for when $A$ is symmetric PSD with improved  operation complexity in each iteration. These versions are based on a delicate theoretical connection between TA and CTA analyzed in \cite{kalantari23}. In fact, CTA can be viewed as a dynamic version of TA, where in each iteration the current iterate becomes the center of a new ellipsoid with a particular radius $\rho$.  Although by using $H = AA^T$ we square the condition number of the underlying $H$, based on our extensive computational results, our implementation still outperformed GMRES and ``lsqminnorm". 
 In our implementation and computational results we did not consider preconditioning $A$. Firstly, as is well-known, there is a trade-off in preconditioning  in terms of complexity and runtime \cite{benzisur, benzi3, benzi2, wathenprecon}. Secondly, regardless of preconditioning, the implementations of TA and CTA gave favorable runtimes and sufficiently high-precision approximate solutions. Needless to say, problems for which preconditioning the matrix is amenable, similar approach can be done before running CTA or TA. Nevertheless, future research opportunities can determine the effectiveness of modifications in enhancing and optimizing the implementations of TA and CTA.

\bibliographystyle{plain}
\bibliography{biblio3}
\end{document}